\crefname{appsec}{Appendix}{Appendices} 
\crefname{appsubsec}{Appendix}{Appendices} %
\crefname{sappsec}{Supplemental Appendix}{Supplemental Appendices}
\crefname{sappsubsec}{Supplemental Appendix}{Supplemental Appendices}
\newtheorem{theorem}{Theorem}[section] %
\newtheorem{lemma}{Lemma}[section] %
\newtheorem{corollary}{Corollary}[section] %
\theoremstyle{definition}
\newtheorem{remark}{Remark}[section] %
\newcommand{\norm}[1]{\lVert#1\rVert}
\newcommand{\Rlower}[1]{\ensuremath q_{#1}} 
\newcommand{\Rfl}[1]{\ensuremath\chi_{#1}} 
\DeclareMathOperator{\cv}{cv} 
\DeclareMathOperator{\bias}{bias} 
\DeclareMathOperator{\maxbias}{\overline{bias}} 
\DeclareMathOperator{\minbias}{\underline{bias}} 
\DeclareMathOperator*{\argmin}{argmin} 
\newcommand{\dd}{\mathrm{d}} 
\DeclareMathOperator{\sd}{sd} 
\DeclarePairedDelimiter\abs{\lvert}{\rvert}
\DeclarePairedDelimiter\hol{(}{]}
\DeclarePairedDelimiter\hor{[}{)}
\DeclarePairedDelimiter\1{1(}{)}
\title{Optimal inference in a class of regression models\thanks{We thank Don
    Andrews, Isaiah Andrews, Matias Cattaneo, Gary Chamberlain, Denis
    Chetverikov, Yuichi Kitamura, Soonwoo Kwon, Ulrich Müller and Azeem Shaikh
    for useful discussions. We thank the editor, three anonymous referees, and
    numerous seminar and conference participants for helpful comments and
    suggestions. All remaining errors are our own. The research of the first
    author was supported by National Science Foundation Grant SES-1628939. The
    research of the second author was supported by National Science Foundation
    Grant SES-1628878.}}
\author{Timothy B. Armstrong\thanks{email: \texttt{timothy.armstrong@yale.edu}}\\
  Yale University \and
  Michal Kolesár\thanks{email: \texttt{mkolesar@princeton.edu}}\\
  Princeton University} \date{\today}
\begin{document}
\maketitle

\begin{abstract}
  We consider the problem of constructing confidence intervals (CIs) for a
  linear functional of a regression function, such as its value at a point, the
  regression discontinuity parameter, or a regression coefficient in a linear or
  partly linear regression. Our main assumption is that the regression function
  is known to lie in a convex function class, which covers most smoothness
  and/or shape assumptions used in econometrics. We derive finite-sample optimal
  CIs and sharp efficiency bounds under normal errors with known variance. We
  show that these results translate to uniform (over the function class)
  asymptotic results when the error distribution is not known. When the function
  class is centrosymmetric, these efficiency bounds imply that minimax CIs are
  close to efficient at smooth regression functions. This implies, in
  particular, that it is impossible to form CIs that are substantively tighter using
  data-dependent tuning parameters, and maintain coverage over the whole
  function class. We specialize our results to inference on the regression
  discontinuity parameter, and illustrate them in simulations and an empirical
  application.
\end{abstract}

\clearpage

\section{Introduction}

In this paper, we study the problem of constructing confidence intervals (CIs)
for a linear functional $Lf$ of a regression function $f$ in a broad class of
regression models with fixed regressors, in which $f$ is known to belong to some
convex function class $\mathcal{F}$. The linear functional may correspond to the
regression discontinuity parameter, an average treatment effect under
unconfoundedness, or a regression coefficient in a linear or partly linear
regression. The class $\mathcal{F}$ may contain smoothness restrictions
(e.g.~bounds on derivatives, or assuming $f$ is linear as in a linear
regression), and/or shape restrictions (e.g.~monotonicity, or sign restrictions
on regression coefficients in a linear regression). Often in applications, the
function class will be indexed by a smoothness parameter $C$, such as when
$\mathcal{F}=\mathcal{F}_{\text{Lip}}(C)$, the class of Lipschitz continuous
functions with Lipschitz constant $C$.

Our main contribution is to derive finite-sample optimal CIs and sharp
efficiency bounds that have implications for data-driven model and bandwidth
selection in both parametric and nonparametric settings. To derive these
results, we assume that the regression errors are normal, with known variance.
When the error distribution is unknown, we obtain analogous uniform asymptotic
results under high-level regularity conditions. We derive sufficient low-level
conditions in an application to regression discontinuity.

First, we characterize one-sided CIs that minimize the maximum $\beta$ quantile
of excess length over a convex class $\mathcal{G}$ for a given quantile $\beta$.
The lower limit $\hat{c}$ of the optimal CI $\hor{\hat{c}, \infty}$ has a simple
form: take an estimator $\hat{L}$ that trades off bias and variance in a certain
optimal sense and is linear in the outcome vector, and subtract (1) the standard
deviation of $\hat{L}$ times the usual critical value based on a normal
distribution and (2) a bias correction to ensure coverage. This bias correction,
in contrast to bias corrections often used in practice, is based on the maximum
bias of $\hat{L}$ over $\mathcal{F}$, and is therefore non-random.

When $\mathcal{G}=\mathcal{F}$, this procedure yields minimax one-sided CIs.
Setting $\mathcal{G}\subset\mathcal{F}$ to a class of smoother functions is
equivalent to ``directing power'' at these smoother functions while maintaining
coverage over $\mathcal{F}$, and gives a sharp bound on the scope for adaptation
for one-sided CIs. We show that when $\mathcal{F}$ is centrosymmetric
(i.e.~$f\in\mathcal{F}$ implies $-f\in\mathcal{F}$), the scope for adaptation is
severely limited: when $\mathcal{G}$ is a class of functions that are, in a
certain formal sense, ``sufficiently smooth'' relative to $\mathcal{F}$, CIs
that are minimax for $\beta$ quantile of excess length also optimize excess
length over $\mathcal{G}$, but at a different quantile. Furthermore, they are
also highly efficient at such smooth functions for the same quantile. For
instance, a CI for the conditional mean at a point that is minimax over the
Lipschitz class $\mathcal{F}_{\text{Lip}}(C)$ is asymptotically 95.2\% efficient
at constant functions relative to a CI that directs all power at constant
functions. For function classes that bound a derivative of higher order, the
efficiency is even higher.

Second, we derive a confidence set that minimizes its expected length at a
single function $g$. We compare its performance to the optimal fixed-length CI
of \citet{donoho94} (i.e.\ CI of the form $\hat{L}\pm \chi$, where $\hat{L}$ is
an affine estimator, and $\chi$, which doesn't depend on the outcome vector and
is therefore non-random, is chosen to ensure coverage). Similarly, to the
one-sided case, we find that, when $\mathcal{F}$ is centrosymmetric, the optimal
fixed-length CIs are highly efficient at functions that are smooth relative to
$\mathcal{F}$. For instance, the optimal fixed-length CI for a conditional mean
at a point when $f\in\mathcal{F}_{\text{Lip}}(C)$ is asymptotically 95.6\%
efficient at any constant function $g$ relative to a confidence set that
optimizes its expected length at $g$.

An important practical implication of these results is that explicit a priori
specification of the smoothness constant $C$ cannot be avoided: procedures that
try to determine the smoothness of $f$ from the data (and thus implicitly
estimate $C$ from the data), including data-driven bandwidth or variable
selectors, must either fail to substantively improve upon the minimax CIs or
fixed-length CIs (that effectively assume the worst case smoothness), or else
fail to maintain coverage over the whole parameter space. We illustrate this
point through a Monte Carlo study in a regression discontinuity (RD) setting, in
which we show that popular data-driven bandwidth selectors lead to substantial
undercoverage, even when combined with bias correction or undersmoothing (see
\Cref{sec:monte-carlo-evidence}). To avoid having to specify $C$, one
has to strengthen the assumptions on $f$. For instance, one can impose shape
restrictions that break the centrosymmetry, as in \citet{cai_adaptive_2013} or
\citet{armstrong15as}, or self-similarity assumptions that break the convexity,
as in \citet{GiNi10} or \citet{cck14as}. Alternatively, one can weaken the
coverage requirement in the definition of a CI, by, say, only requiring average
coverage as in \citet{CaLoMa14} or \citet{hall_simple_2013}.

We apply these results to the problem of inference in RD\@. We show, in the
context of an empirical application from \citet{lee08}, that the fixed-length
and minimax CIs are informative and simple to construct, and we give a detailed
guide to implementing them in practice. We also consider CIs based on local
linear estimators, which have been popular in RD due to their high minimax
asymptotic MSE efficiency, shown in \citet{cfm97}. Using the same function
classes as in \citet{cfm97}, we show that in the \citeauthor{lee08} application,
when a triangular kernel is used, such CIs are highly efficient relative to the
optimal CIs discussed above.

Our finite-sample approach allows us to use the same framework and methods to
cover problems that are often seen as outside of the scope of nonparametric
methods. For instance, the same CIs can be used in RD whether the running
variable is discrete or continuous; one does not need a different modeling
approach, such as that of \citet{lee_regression_2008}. Similarly, we do not need
to distinguish between ``parametric'' or ``nonparametric'' constraints on $f$;
our results apply to inference in a linear regression model that efficiently use
a priori bounds and sign restrictions on the regression coefficients. Here our
efficiency bounds imply that the scope for efficiency improvements from CIs
formed after model selection
\citep{andrews_hybrid_2009,mccloskey_bonferroni-based_2012} is severely limited
unless asymmetric or non-convex restrictions are imposed, and they also limit
the scope for improvement under certain non-convex restrictions such as the
sparsity assumptions used in \citet{belloni_inference_2014}. We discuss these
issues in an earlier version of this paper \citep{ArKo15optimal}.

Our results and setup build on a large statistics literature on optimal
estimation and inference in the nonparametric regression model. This literature
has mostly been concerned with estimation (e.g., \citet{stone80},
\citet{IbKh85}, \citet{fan93}, \citet{donoho94}, \citet{cfm97}); the literature
on inference has mostly been focused on bounding rates of convergence. The
results most closely related to ours are those in \citet{low97}, \citet{CaLo04}
and \citet{cai_adaptive_2013}, who derive lower bounds on the expected length of
a two-sided CI over a convex class $\mathcal{G}$ subject to coverage over a
convex class $\mathcal{F}$. These results imply that, when $\mathcal{F}$ is
constrained only by bounds on a derivative, one cannot improve the rate at which
a two-sided CI shrinks by ``directing power'' at smooth functions. We contribute
to this literature by (1) deriving a sharp lower bound for one-sided CIs, and
for two-sided CIs when $\mathcal{G}$ is a singleton, (2) showing that the
negative results for ``directing power'' at smooth functions generalize to the
case when $\mathcal{F}$ is centrosymmetric, and deriving the sharp bound on the
scope for improvement, (3) deriving feasible CIs under unknown error
distribution and showing their asymptotic validity and efficiency, including in
non-regular settings; and (4) computing the bounds and CIs in an application to
RD\@.

The remainder of this paper is organized as follows. \Cref{sec:simple-example}
illustrates our results in an application to RD, and gives a detailed guide to
implementing our CIs. \Cref{sec:general_results} derives the main results under
a general setup. \Cref{sec:appl-regr-disc} considers an empirical application.
Proofs, long derivations, and additional results are collected in appendices.
\Cref{sec:ap:main-proofs} contains proofs for the main results in
\Cref{sec:general_results}. \Cref{covariate_section_append} discusses extensions
to incorporate covariates in the RD application. \Cref{sec:comp-with-other}
compares our CIs to other approaches, and includes a Monte Carlo study.
Additional details for constructing CIs studied in \Cref{sec:general_results}
are in \Cref{sec:ap:details}. \Cref{sec:addit-deta-rd} contains additional
details for the RD application. Asymptotic results are collected in Supplemental
\Cref{sec:sap:unknown_error,sec:sap:asym_efficiency_bounds,,sec:sap:rd_asym}.

\section{Application to regression discontinuity}\label{sec:simple-example}

In this section, we explain our results in the context of an application to
sharp regression discontinuity (RD). \Cref{optimal_cis_subsec} illustrates the
theoretical results, while \Cref{sec:pract-impl} gives step-by-step instructions
for implementing our confidence intervals (CIs) in practice.

We observe $\{y_{i}, x_{i}\}_{i=1}^{n}$, where the running variable $x_{i}$ is
deterministic, and
\begin{equation}\label{eq:fixed_design_eq}
  y_i=f(x_i)+u_{i}, \quad u_{i}\sim
  \mathcal{N}(0,\sigma^2(x_i))\;\text{independent across
    $i$,}
\end{equation}
with $\sigma^2(x)$ known.\footnote{This assumption is made to deliver
  finite-sample results---when the distribution of $u_{i}$ is unknown, with
  unknown conditional variance, we show in \Cref{sec:addit-deta-rd} that
  these results lead to analogous uniform-in-$f$ asymptotic results.} The
running variable determines participation in a binary treatment: units above a
given cutoff, which we normalize to $0$, are treated; units with $x_{i}<0$ are
controls. Let $f_{+}(x)=f(x)\1{x\geq 0}$ and $f_{-}(x)=f(x)\1{x< 0}$ denote the
part of the regression function $f$ above and below the cutoff, so that
$f=f_{+}+f_{-}$. The parameter of interest is the jump of the regression
function at zero, and we denote it by $Lf=f_{+}(0)-f_{-}(0)$, where
$f_-(0)=\lim_{x\uparrow 0} f_-(x)$. If the regression functions of potential
outcomes are continuous at zero, then $Lf$ measures the average treatment effect
for units with $x_{i}=0$.

We assume that $f$ lies in the class of functions $\mathcal{F}_{RDT, p}(C)$,
\begin{equation*}
  \mathcal{F}_{RDT, p}(C)= \big \{f_{+}+f_{-}\colon
    f_+\in\mathcal{F}_{T, p}(C;\mathbb{R}_+),\;
    f_{-}\in\mathcal{F}_{T,p}(C;\mathbb{R}_{-}) \big\},
\end{equation*}
where $\mathcal{F}_{T,p}(C;\mathcal{X})$ consists of functions $f$ such that the
approximation error from a $(p-1)$th-order Taylor expansion of $f(x)$ about $0$
is bounded by $C|x|^{p}$, uniformly over $\mathcal{X}$,
\begin{equation*}
  \mathcal{F}_{T,p}(C;\mathcal{X}) =\big\{f\colon \big|
      f(x)-\textstyle\sum_{j=0}^{p-1}
      f^{(j)}(0) x^{j}/j!\big|\le C|x|^p\text{ all
    }x\in\mathcal{X}\big\}.
\end{equation*}
This formalizes the notion that locally to $0$, $f$ is $p$-times differentiable
with the $p$th derivative at zero bounded by $p!C$. \citet{SaYl78} and
\citet{cfm97} considered minimax MSE estimation of $f(0)$ in this class when $0$
is a boundary point. Their results formally justify using local polynomial
regression to estimate the RD parameter. This class does not impose any
smoothness of $f$ away from cutoff, which may be too conservative in
applications. We consider inference under global smoothness in \citet{ArKo15},
where we show that for the $p=2$ case, the resulting CIs are about 10\% tighter
in large samples (see also \Cref{sec:monte-carlo-evidence} for a Monte
Carlo study under global smoothness).

\subsection{Optimal CIs}\label{optimal_cis_subsec}

For ease of exposition, we focus in this subsection on the case $p=1$, so that
the parameter space is given by $\mathcal{F}=\mathcal{F}_{RDT,1}(C)$, and assume
that the errors are homoskedastic, $\sigma^{2}(x_{i})=\sigma^{2}$. In
\Cref{sec:pract-impl}, we discuss implementation of the CIs in the general case
where $p\ge 1$.

Consider first the problem of constructing one-sided CIs for $Lf$. In
particular, consider the problem of constructing CIs $\hor{\hat{c},\infty}$ that
minimize the maximum $\beta$th quantile of excess length,
$\sup_{f\in\mathcal{F}}q_{f,\beta}(Lf-\hat{c})$, where $q_{f,\beta}$ denotes the
$\beta$th quantile of the excess length $Lf-\hat{c}$. We show in
\Cref{sec:onesided-cis} that such CIs can be obtained by inverting tests of the
null hypothesis $H_{0}\colon f_{+}(0)-f_{-}(0)\leq L_{0}$ that maximize their
minimum power under the alternative
$H_{1}\colon f_{+}(0)-f_{-}(0)\geq L_{0}+2b$, where the half-distance $b$ to the
alternative is calibrated so that the minimum power of these tests equals
$\beta$.

To construct such a test, note that if we set $\mu=(f(x_{1}),\dotsc,f(x_{n}))'$,
and $Y=(y_{1},\dotsc,y_{n})'$, we can view the testing problem as an $n$-variate
normal mean problem $Y\sim\mathcal{N}(\mu,\sigma^{2}I_{n})$, in which the vector
of means $\mu$ is constrained to take values in the convex sets
$M_{0}=\{(f(x_{1}),\dotsc,\allowbreak f(x_{n}))'\colon f\in\mathcal{F},
f_{+}(0)-f_{-}(0)\leq L_{0}\}$ under the null, and
$M_{1}=\{(g(x_{1}),\dotsc,g(x_{n}))'\colon \allowbreak g\in\mathcal{F},
g_{+}(0)-g_{-}(0)\geq L_{0}+2b\}$ under the alternative. The convexity of the
null and alternative sets implies that this testing problem has a simple
solution: by \Cref{convex_testing_lemma}, the minimax test is given by the
uniformly most powerful test of the simple null $\mu=\mu_{0}^{*}$ against the
simple alternative $\mu=\mu_{1}^{*}$, where $\mu^{*}_{0}$ and $\mu^{*}_{1}$
minimize the Euclidean distance between the null and alternative sets $M_{0}$
and $M_{1}$, and thus represent points in $M_{0}$ and $M_{1}$ that are hardest
to distinguish. By the Neyman-Pearson lemma, such test rejects for large values
of $(\mu_{1}^{*}-\mu_{0}^{*})'Y$. Because by \Cref{convex_testing_lemma}, this
test controls size over all of $M_{0}$, the points $\mu_{1}^{*}$ and
$\mu_{0}^{*}$ are called ``least favorable'' (see Theorem 8.1.1 in
\citealp{LeRo05}).

To compute $\mu^{*}_{0}=(f^{*}(x_{1}),\dotsc,f^{*}(x_{n}))'$ and
$\mu^{*}_{1}=(g^{*}(x_{1}),\dotsc,g^{*}(x_{n}))'$, we thus need to find
functions $f^{*}$ and $g^{*}$ that solve
\begin{equation}\label{eq:simple-example:minimization-problem}
  (f^{*},g^{*})=\argmin_{f,g\in \mathcal{F}}
  \sum_{i=1}^{n}(f(x_{i})-g(x_{i}))^{2}
  \qquad \text{subject to
    $Lf\leq L_{0} $, $Lg\geq L_{0}+2b$}.
\end{equation}
A simple calculation shows that the least favorable functions solving this
minimization problem are given by
\begin{equation}\label{eq:fg-star}
  \begin{aligned}
    g^{*}(x)&=\1{x\geq 0}(L_{0}+b)+C h_{+}\cdot k_{+}(x/h_{+})-Ch_{-} \cdot k_{-}(x/h_{-}),\\
    f^{*}(x)&=2\cdot \1{x\geq 0}(L_{0}+b)-g^{*}(x),
  \end{aligned}
\end{equation}
where $k(u)=\max\{0,1-\abs{u}\}$ is the triangular kernel,
$k_{+}(u)=k(u)\1{u\geq 0}$ and $k_{-}(u)=k(u)\1{u<0}$, and the ``bandwidths''
$h_{+},h_{-}$ are determined by a condition ensuring that $Lg^{*}\geq L_{0}+2b$,
\begin{equation}\label{eq:h-b}
  h_{+}+h_{-}
  =b/C,
\end{equation}
and a condition ensuring that positive and negative observations are equally
weighted,
\begin{equation}\label{eq:h-plus-h-minus}
  h_{+}
  \sum_{i=1}^{n}k_{+}(x_{i}/h_{+})=h_{-}\sum_{i=1}^{n}k_{-}(x_{i}/h_{-}).
\end{equation}
Intuitively, to make the null and alternative hardest to distinguish, the least
favorable functions $f^{*}$ and $g^{*}$ converge to each other ``as quickly as
possible'', subject to the constraints $Lf^{*}\leq L_{0}$ and
$Lg^{*}\geq b+L_{0}$, and the Lipschitz constraint---see
Figure~\ref{fig:lipschitz-lf}.

By working out the appropriate critical value and rearranging, we obtain that
the minimax test rejects whenever
\begin{equation}\label{eq:simple-example:minimax-test}
  \hat{L}_{h_{+},h_{-}} -L_{0}-
  \bias_{f^{*}}( \hat{L}_{h_{+},h_{-}}) \geq \sd(\hat{L}_{h_{+},h_{-}})z_{1-\alpha}.
\end{equation}
Here $\hat{L}_{h_{+},h_{-}}$ is a kernel estimator based on a triangular kernel
and bandwidths $h_{+}$ to the left and $h_{-}$ to the right of the cutoff
\begin{equation}\label{eq:triangular-kernel-estimator}
  \hat{L}_{h_{+},h_{-}}=\frac{\sum_{i=1}^{n}(g^{*}(x_{i})-f^{*}(x_{i}))y_{i}}{
    \sum_{i=1}^{n}(g_{+}^{*}(x_{i})-f_{+}^{*}(x_{i}))}=
  \frac{\sum_{i=1}^{n}k_{+}(x_{i}/h_{+})y_{i}}{
    \sum_{i=1}^{n}k_{+}(x_{i}/h_{+})}-
  \frac{\sum_{i=1}^{n}k_{-}(x_{i}/h_{-})y_{i}}{
    \sum_{i=1}^{n}k_{-}(x_{i}/h_{-})},
\end{equation}
$\sd(\hat{L}_{h_{+},h_{-}})= \big(\frac{\sum_{i} k_{+}(x_{i}/h_{+})^{2}}{
  (\sum_{i}k_{+}(x_{i}/h_{+}))^{2}}+ \frac{\sum_{i}k_{-}(x_{i}/h_{-})^{2}}{
  (\sum_{i}k_{-}(x_{i}/h_{-}))^{2}}\big)^{1/2}\cdot\sigma$ is its standard
deviation, $z_{1-\alpha}$ is the $1-\alpha$ quantile of a standard normal
distribution, and
$\bias_{f^{*}}(\hat{L}_{h_{+},h_{-}})= C\sum_{i}\abs{x_{i}}\cdot
\big(\frac{k_{+}(x_{i}/h_{+})}{\sum_{j}k_{+}(x_{j}/h_{+})}+\frac{k_{-}(x_{i}/h_{-})}{\sum_{j}k_{-}(x_{j}/h_{-})}\big)$
is the estimator's bias under $f^{*}$. The estimator $\hat{L}_{h_{+},h_{-}}$ is
normally distributed with variance that does not depend on the true function
$f$. Its bias, however, does depend on $f$. To control size under $H_{0}$ in
finite samples, it is necessary to subtract the largest possible bias of
$\hat{L}_{h}$ under the null, which obtains at $f^{*}$. Since the rejection
probability of the test is decreasing in the bias, its minimum power occurs when
the bias is minimal under $H_{1}$, which occurs at $g^{*}$, and is given by
\begin{equation}\label{eq:simple-example:minimax-power}
  \beta=
  \Phi\Big(2C\textstyle
  \sqrt{h_{+}^{2}\sum_{i}k_{+}(x_{i}/h_{+})^{2}
    +h_{-}^{2}\sum_{i}k_{-}(x_{i}/h_{-})^{2}}/\sigma-z_{1-\alpha}\Big).
\end{equation}
Since the estimator, its variance, and the non-random bias correction are all
independent of the particular null $L_{0}$, the CI based on inverting these
tests as $H_{0}$ varies over $\mathbb{R}$ is given by
\begin{equation}\label{eq:simple-example:oci}
  \hor{\hat{c}_{\alpha,h_{+},h_{-}},\infty},\quad\text{where}\quad
  \hat{c}_{\alpha,h_{+},h_{-}}=
  \hat{L}_{h_{+},h_{-}}-\bias_{f^{*}}(\hat{L}_{h_{+},h_{-}})
  -
  \sd(\hat{L}_{h_{+},h_{-}})z_{1-\alpha}.
\end{equation}
This CI minimizes the $\beta$th quantile maximum excess length with $\beta$
given by the minimax power of the tests~\eqref{eq:simple-example:minimax-power}.
Equivalently, given a quantile $\beta$ that we wish to optimize, let
$h_{+}(\beta)$ and $h_{-}(\beta)$ solve~\eqref{eq:h-plus-h-minus}
and~\eqref{eq:simple-example:minimax-power}. The optimal CI is then given by
$\hor{\hat{c}_{\alpha,h_{+}(\beta),h_{-}(\beta)},\infty}$, and the half-distance
$b$ to the alternative of the underlying tests is determined by~\eqref{eq:h-b}.
The important feature of this CI is that the bias correction is non-random: it
depends on the worst-case bias of $\hat{L}_{h_{+}(\beta),h_{-}(\beta)}$, rather
than an estimate of the bias. Furthermore, it doesn't disappear asymptotically.
One can show that the squared worst-case bias of
$\hat{L}_{h_{+}(\beta),h_{-}(\beta)}$ and its variance are both of the order
$n^{-2/3}$. Consequently, no CI that ``undersmooths'' in the sense that it is
based on an estimator whose bias is of lower order than its variance can be
minimax optimal asymptotically or in finite samples.

An apparent disadvantage of this CI is that it requires the researcher to choose
the smoothness parameter $C$. Addressing this issue leads to ``adaptive'' CIs.
Adaptive CIs achieve good excess length properties for a range of parameter
spaces $\mathcal{F}_{RDT,1}(C_{j})$, $C_{1}<\dotsb <C_{J}$, while maintaining
coverage over their union, which is given by $\mathcal{F}_{RDT,1}(C_{J})$, where
$C_{J}$ is some conservative upper bound on the possible smoothness of $f$. In
contrast, a minimax CI only considers worst-case excess length over
$\mathcal{F}_{RDT,1}(C_{J})$. To derive an upper bound on the scope for
adaptivity, consider the problem of finding a CI that optimizes excess length
over $\mathcal{F}_{RDT,1}(0)$ (the space of functions that are constant on
either side of the cutoff), while maintaining coverage over
$\mathcal{F}_{{RDT},1}(C)$ for some $C>0$.

To derive the form of such CI, consider the one-sided testing problem
$H_{0}\colon Lf\leq L_{0}$ and $f\in\mathcal{F}_{{RDT},1}(C)$ against the
one-sided alternative $H_{1}\colon f(0)\geq L_{0}+b$ and
$f\in\mathcal{F}_{{RDT},1}(0)$ (so that now the half-distance to the alternative
is given by $b/2$ rather than $b$). This is equivalent to a multivariate normal
mean problem $Y\sim\mathcal{N}(\mu,\sigma^{2}I_{n})$, with $\mu\in M_{0}$ under
the null as before, and
$\mu\in\tilde{M}_{1}=\left\{ (f(x_{1}),\dotsc,f(x_{n}))'\colon
  f\in\mathcal{F}_{RDT,1}(0),Lf\geq L_{0}+b\right\}$. Since the null and
alternative are convex, by the same arguments as before, the least favorable
functions minimize the distance between the two sets. The minimizing functions
are given by $\tilde{g}^{*}(x)=\1{x\geq 0}(L_{0}+b)$, and $\tilde{f}^{*}=f^{*}$
(same function as before). Since $\tilde{g}^{*}-\tilde{f}^{*}=(g^{*}-f^{*})/2$,
this leads to the same test and the same CI as before---the only difference is
that we moved the half-distance to the alternative from $b$ to $b/2$. Hence, the
minimax CI that optimizes a given quantile of excess length over
$\mathcal{F}_{{RDT},1}(C)$ also optimizes its excess length over the space of
constant functions, but at a different quantile. Furthermore, in
\Cref{sec:onesided-cis}, we show that the minimax CI remains highly efficient if
one compares excess length at the same quantile: in large samples, the
efficiency at constant functions is $95.2\%$. Therefore, it is not possible to
``adapt'' to cases in which the regression function is smoother than the least
favorable function. Consequently, it is not possible to tighten the minimax CI
by, say, using the data to ``estimate'' the smoothness parameter $C$.

A two-sided CI can be formed as
$\hat{L}_{h_{+},h_{-}}\pm(\bias_{f^{*}}(\hat{L}_{h_{+},h_{-}})
+\sd(\hat{L}_{h_{+},h_{-}})z_{1-\alpha/2})$, thereby accounting for possible
bias of $\hat{L}_{h_{+},h_{-}}$. However, this is conservative, since the bias
cannot be in both directions at once. Since the $t$-statistic
$(\hat{L}_{h_{+},h_{-}}-Lf)/\sd(\hat{L}_{h_{+},h_{-}})$ is normally distributed
with variance one and mean at most
$\bias_{f^{*}}(\hat{L}_{h_{+},h_{-}})/\sd(\hat{L}_{h_{+},h_{-}})$ and least
$-\bias_{f^{*}}(\hat{L}_{h_{+},h_{-}})/\sd(\hat{L}_{h_{+},h_{-}})$, a
nonconservative CI takes the form
\begin{equation*}
  \hat{L}_{h_{+},h_{-}}\pm
  \sd(\hat{L}_{h_{+},h_{-}})
  \cv_{\alpha}(\bias_{f^{*}}(\hat{L}_{h_{+},h_{-}})/\sd(\hat{L}_{h_{+},h_{-}})),
\end{equation*}
where $\cv_{\alpha}(t)$ is the $1-\alpha$ quantile of the absolute value of a
$\mathcal{N}(t,1)$ distribution, which we tabulate in Table~\ref{tab:cv-b}. The
optimal bandwidths $h_{+}$ and $h_{-}$ simply minimize the CI's length,
$2\sd(\hat{L}_{h_{+},h_{-}})\cdot
\cv_{\alpha}(\bias_{f^{*}}(\hat{L}_{h_{+},h_{-}})/\sd(\hat{L}_{h_{+},h_{-}}))$.
It can be shown that the solution satisfies~\eqref{eq:h-plus-h-minus}, so
choosing optimal bandwidths is a one-dimensional optimization problem. Since the
length doesn't depend on the data $Y$, minimizing it does not impact the
coverage properties of the CI\@. This CI corresponds to the optimal affine
fixed-length CI, as defined in \citet{donoho94}. Since the length of the CI
doesn't depend on the data $Y$, it cannot be adaptive. In
\Cref{sec:two-sided-cis_main} we derive a sharp efficiency bound that shows
that, similar to the one-sided case, these CIs are nonetheless highly efficient
relative to variable-length CIs that optimize their length at smooth functions.

The key to these non-adaptivity results is that the class $\mathcal{F}$ is
centrosymmetric (i.e. $f\in\mathcal{F}$ implies $-f\in\mathcal{F}$) and convex.
For adaptivity to be possible, it is necessary (but perhaps not sufficient) to
impose shape restrictions like monotonicity, or non-convexity of $\mathcal{F}$.

\subsection{Practical implementation}\label{sec:pract-impl}

We now discuss some practical issues that arise when implementing optimal
CIs.\footnote{An R package implementing these CIs is available at
  \url{https://github.com/kolesarm/RDHonest}.} To describe the form of the
optimal CIs for general $p\geq 1$, consider first the problem of constructing
CIs based on a linear estimator of the form
\begin{equation}\label{eq:rd-linear-estimator}
  \hat{L}_{h_{+},h_{-}}=\sum_{i=1}^{n}w_+(x_i,h_{+})y_i
  -\sum_{i=1}^{n}w_-(x_i,h_{-})y_i,
\end{equation}
where $h_{+},h_{-}$ are smoothing parameters, and the weights satisfy
$w_{+}(-x,h_{+})=w_{-}(x,h_{-})=0$ for $x\geq 0$. The estimator
$\hat{L}_{h_{+},h_{-}}$ is normally distributed with variance
$\sd(\hat{L}_{h_{+},h_{-}})^{2}
=\sum_{i=1}^{n}(w_+(x_i,h_{+})+w_{-}(x_{i},h_{-}))^{2}\sigma^{2}(x_{i})$, which
does not depend on $f$. A simple argument (see \Cref{sec:addit-deta-rd})
shows that largest possible bias of $\hat{L}_{h_{+},h_{-}}$ over the parameter
space $\mathcal{F}_{RDT,p}(C)$ is given by
\begin{equation}\label{eq:rd-worst-case-bias}
  \maxbias_{\mathcal{F}_{RDT,p}(C)}(\hat{L}_{h_{+},h_{-}})
  =C\sum_{i=1}^n\abs{w_+(x_i,h_{+})+w_-(x_i,h_{-})}\cdot\abs{x_{i}}^{p},
\end{equation}
provided that the weights are such that $\hat{L}_{h_{+},h_{-}}$ is unbiased for
$f$ that takes the form of a $(p-1)$th order polynomial on either side of cutoff
(otherwise the worst-case bias will be infinite). By arguments as in
\Cref{optimal_cis_subsec}, one can construct one- and two-sided CIs based on
$\hat{L}_{h_{+},h_{-}}$ as
\begin{equation}\label{eq:rd-oci}
  \hor{c(\hat{L}_{h_{+},h_{-}}),\infty}\qquad c(\hat{L}_{h_{+},h_{-}})=\hat{L}_{h_{+},h_{-}}-
  \maxbias_{\mathcal{F}_{RDT,p}(C)}(\hat{L}_{h_{+},h_{-}})
  -\sd(\hat{L}_{h_{+},h_{-}})z_{1-\alpha},
\end{equation}
and
\begin{equation}\label{eq:rd-flci}
  \hat{L}_{h_{+},h_{-}}\pm \cv_{\alpha}
  (\maxbias_{\mathcal{F}_{RDT,p}(C)}(\hat{L}_{h_{+},h_{-}})/\sd(\hat{L}_{h_{+},h_{-}}))
  \cdot \sd(\hat{L}_{h_{+},h_{-}}).
\end{equation}
The problem of constructing optimal two- and one- sided CIs can be cast as a
problem of finding weights $w_{+},w_{-}$ and smoothing parameters $h_{+}$ and
$h_{-}$ that lead to CIs with the shortest length, and smallest worst-case
$\beta$ quantile of excess length, respectively. The solution to this problem
follows from a generalization of results in \citet{SaYl78}. The optimal weights
$w_{+}$ and $w_{-}$ are given by a solution to a system of $2(p-1)$ equations,
described in \Cref{sec:addit-deta-rd}. When $p=1$, they reduce to the weights
$w_{+}(x_{i},h_{+})=k_{+}(x_{i}/h_{+})/\sum_{i}k_{+}(x_{i}/h_{+})$ and
$w_{-}(x_{i},h_{-})=k_{-}(x_{i}/h_{+})/\sum_{i}k_{-}(x_{i}/h_{+})$, where
$k_{+}(x_{i})=k(x_{i})\1{x_{i}\geq 0}$ and $k_{-}(x_{i})=k(x_{i})\1{x_{i}< 0}$,
and $k(u)=\max\{0,1-\abs{u}\}$ is a triangular kernel. This leads to the
triangular kernel estimator~\eqref{eq:triangular-kernel-estimator}. For $p>1$,
the optimal weights depend on the empirical distribution of the running variable
$x_{i}$.

An alternative to using the optimal weights is to use a local polynomial
estimator of order $p-1$, with kernel $k$ and bandwidths $h_{-}$ and $h_{+}$ to
the left and to the right of the cutoff. This leads to weights of the form
\begin{equation}\label{eq:lp-weights}
  w_{+}(x_{i},h_{+})= e_{1}'\left(\sum_{i}k_{+}(x_{i}/h_{+})r_{i}r_{i}'\right)^{-1}
  \sum_{i}k_{+}(x_{i}/h_{+})r_{i},
\end{equation}
and similarly for $w_{-}(x_{i},h_{-})$, where
$r_{i}=(1,x_{i},\dotsc,x_{i}^{p-1})$ and $e_{1}$ is the first unit vector. Using
the efficiency bounds we develop in \Cref{sec:general_results}, it can be shown
that, provided that the bandwidths $h_{+}$ and $h_{-}$ to the right and to the
left of the cutoff are appropriately chosen, in many cases the resulting CIs are
highly efficient. In particular, for $p=2$, using the local linear estimator
with the triangular kernel turns out to lead to near-optimal CIs (see
\Cref{sec:appl-regr-disc}).

Thus, given smoothness constants $C$ and $p$, one can construct optimal or
near-optimal CIs as follows:
\begin{enumerate}
\item\label{item:step1} Form a preliminary estimator of the conditional variance
  $\hat{\sigma}(x_{i})$. We recommend using the estimator
  $\hat\sigma^2(x_i)=\hat\sigma^2_+(0)\1{x\geq 0}+\hat\sigma^2_-(0)\1{x<0}$
  where $\hat\sigma^2_+(0)$ and $\hat\sigma^2_-(0)$ are estimates of
  $\lim_{x\downarrow 0}\sigma^2(x)$ and $\lim_{x\uparrow 0}\sigma^2(x)$
  respectively.\footnote{In the empirical application in
    \Cref{sec:appl-regr-disc}, we use estimates based on local linear regression
    residuals.}
\item Given smoothing parameters $h_{+}$ and $h_{-}$, compute the weights
  $w_{+}$ and $w_{-}$ using either~\eqref{eq:lp-weights} (for local polynomial
  estimator), or by solving the system of equations given in
  \Cref{sec:addit-deta-rd} (for the optimal estimator). Compute the worst case
  bias~\eqref{eq:rd-worst-case-bias}, and estimate the variance as
  $\widehat{\sd}(\hat{L}_{h_{+},h_{-}})^{2}
  =\sum_{i}(w_+(x_i,h_{+})+w_{-}(x_{i},h_{-}))^{2}\hat{\sigma}^{2}(x_{i})$.
\item\label{item:step3} Find the smoothing parameters $h^{*}_{+}$ and
  $h^{*}_{-}$ that minimize the $\beta$-quantile of excess length
  \begin{equation}\label{worst-case_beta_quantile_eq}
    2\maxbias_{\mathcal{F}_{RDT,p}(c)}(
    \hat{L}_{h_{+},h_{-}})+
    \sd(\hat{L}_{h_{+},h_{-}})(z_{1-\alpha}+z_{\beta}).
  \end{equation}
  for a given $\beta$. The choice $\beta=0.8$, corresponds to a benchmark used
  in statistical power analysis \citep[see][]{cohen88}. For two-sided CIs,
  minimize the length
  \begin{equation}\label{flci_length_eq}
    2 \widehat\sd(\hat{L}_{h_{+},h_{-}})
    \cv_{\alpha}\left(\maxbias_{\mathcal{F}_{RDT,p}(C)}(
      \hat{L}_{h_{+},h_{-}})/
      \widehat\sd(\hat{L}_{h_{+},h_{-}})\right).
  \end{equation}

\item\label{item:step4} Construct the CI using~\eqref{eq:rd-oci} (for one-sided
  CIs), or~\eqref{eq:rd-flci} (for two-sided CIs), based on
  $\hat{L}_{h^{*}_{+},h^{*}_{-}}$, with
  $\widehat{\sd}(\hat{L}_{h^{*}_{+},h^{*}_{-}})$ in place of the (infeasible)
  true standard deviation.
\end{enumerate}
\begin{remark}\label{remark:nn-variance}
  The variance estimator in step~\ref{item:step1} leads to asymptotically valid
  and optimal inference even when $\sigma^2(x)$ is non-constant, so long as it
  is smooth on either side of the cutoff. However, finite-sample properties of
  the resulting CI may not be good if heteroskedasticity is important for the
  sample size at hand. We therefore recommend using the variance estimator
  \begin{equation}\label{eq:robust-variance}
    \widehat{\sd}_{\text{robust}}(\hat{L}_{h^{*}_{+},h^{*}_{-}})^{2}
    =\sum_{i=1}^{n}(w_+(x_i,h_{+})+w_{-}(x_{i},h_{-}))^{2}\hat{u}_{i}^2
  \end{equation}
  instead of $\widehat{\sd}(\hat{L}_{h^{*}_{+},h^{*}_{-}})$ in
  step~\ref{item:step4}, where $\hat{u}_{i}^{2}$ is an estimate of
  $\sigma^{2}(x_{i})$. When using local polynomial regression, one can set
  $\hat{u}_{i}$ to the $i$th regression residual, in which
  case~\eqref{eq:robust-variance} reduces to the usual Eicker-Huber-White
  estimator. Alternatively, one can use the nearest-neighbor estimator
  \citep{AbIm06match}
  $\hat{u}_{i}^{2}=\frac{J}{J+1}(Y_{i}-J^{-1}\sum_{\ell=1}^{J}Y_{j_{\ell}(i)})^{2}$,
  where $j_{\ell}(i)$ is the $\ell$th closest unit to $i$ among observations on
  the same side of the cutoff, and $J\geq 1$ (we use $J=3$ in the application in
  \Cref{sec:appl-regr-disc}, following \citealp{cct14}). This mirrors the common
  practice of assuming homoskedasticity to compute the optimal weights, but
  allowing for heteroskedasticity when performing inference, such as using OLS
  in the linear regression model (which is efficient under homoskedasticity)
  along with heteroskedasticity-robust standard errors.
\end{remark}

\begin{remark}\label{MSE_bandwidth_remark}
  If one is interested in estimation, rather than inference, one can choose
  $h_{+}$ and $h_{-}$ that minimize the worst-case mean-squared error (MSE)
  $\maxbias_{\mathcal{F}_{RDT,p}(C)}(\hat{L}_{h_{+},h_{-}})^2+\sd(\hat{L}_{h_{+},h_{-}})^2$
  instead of the CI criteria in step~\ref{item:step3}. One can form a CI around
  this estimator by simply following step~\ref{item:step4} with this choice of
  $h_{+}$ and $h_{-}$. In the application in \Cref{sec:appl-regr-disc}, we find
  that little efficiency is lost by using MSE-optimal smoothing parameters,
  relative to using $h_{+}$ and $h_{-}$ that minimize the CI
  length~\eqref{flci_length_eq}. Interestingly, we find that smoothing
  parameters that minimize the CI length actually oversmooth slightly relative
  to the MSE optimal smoothing parameters. We generalize these findings in an
  asymptotic setting in \citet{ArKo15}.
\end{remark}
\begin{remark}\label{remark:covariates}
  Often, a set of covariates $z_i$ will be available that does not depend on the
  treatment, but that may be correlated with the outcome variable $y_i$. If the
  parameter of interest is still the average treatment effect for units with
  $x_i=0$, one can simply ignore these covariates. Alternatively, to gain
  additional precision, as suggested in \citet{ccft16}, one can run a local
  polynomial regression, but with the covariates added linearly. In
  \Cref{covariate_section_append}, we show that this approach is
  near-optimal if one places smoothness assumptions on the conditional mean of
  $\tilde{y}_i$ given $x_i$, where $\tilde{y}_i$ is the outcome with the effect
  of $z_i$ partialled out. If one is interested in the treatment effect as a
  function of $z$ (with $x$ still set to zero), one can use our general
  framework by considering the model $y_i=f(x_i,z_i)+u_i$, specifying a
  smoothness class for $f$, and constructing CIs for
  $\lim_{x\downarrow 0}f(x,z)-\lim_{x\uparrow 0}f(x,z)$ for different values of
  $z$. See \Cref{covariate_section_append} for details.
\end{remark}

A final consideration in implementing these CIs in practice is the choice of the
smoothness constants $C$ and $p$. The choice of $p$ depends on the order of the
derivative the researcher wishes to bound. Since much of empirical practice in
RD is justified by asymptotic MSE optimality results for
$\mathcal{F}_{RDT,2}(C)$ (in particular, this class justifies the use of local
linear estimators), we recommend $p=2$ as a default choice. For $C$,
generalizations of the non-adaptivity results described in
\Cref{optimal_cis_subsec} show that the researcher must choose $C$ a priori,
rather than attempting to use the data to choose $C$. To assess the sensitivity
of the results to different smoothness assumptions on $f$, we recommend
considering a range of plausible choices for $C$. We implement this approach for
our empirical application in \Cref{sec:appl-regr-disc}.

\section{General characterization of optimal procedures}%
\label{sec:general_results}

We consider the following setup and notation, much of which follows
\citet{donoho94}. We observe data $Y$ of the form
\begin{equation}\label{eq:donoho-model}
  Y=Kf+\sigma\varepsilon
\end{equation}
where $f$ is known to lie in a convex subset $\mathcal{F}$ of a vector space,
and $K:\mathcal{F}\to \mathcal{Y}$ is a linear operator between $\mathcal{F}$
and a Hilbert space $\mathcal{Y}$. We denote the inner product on
$\mathcal{Y}$ by $\langle \cdot,\cdot\rangle$, and the norm by $\|\cdot \|$. The
error $\varepsilon$ is standard Gaussian with respect to this inner
product: for any $g\in\mathcal{Y}$, $\langle \varepsilon,g\rangle$ is normal
with $E\langle \varepsilon,g\rangle=0$ and
$\text{var}\left(\langle \varepsilon, g\rangle\right)=\|g\|^2$. We are
interested in constructing a confidence set for a linear functional $Lf$.

The RD model~\eqref{eq:fixed_design_eq} fits into this setup by setting
$Y=(y_{1}/\sigma(x_1),\ldots,\allowbreak y_{n}/\sigma(x_n))'$,
$\mathcal{Y}=\mathbb{R}^n$,
$Kf=(f(x_1)/\sigma(x_1),\ldots,\allowbreak f(x_n)/\sigma(x_n))'$,
$Lf=\lim_{x\downarrow 0}f(x)-\lim_{x\uparrow 0}f(x)$ and $\langle x,y\rangle$
given by the Euclidean inner product $x'y$. As we discuss in detail in
\Cref{sec:ap:special_cases}, our setup covers a number of other
important models, including average treatment effects under unconfoundedness,
the partly linear model, constraints on the sign or magnitude of parameters
in the linear regression model, and other parametric models.

\subsection{Performance criteria}\label{sec:perf-crit-class}

Let us now define the performance criteria that we use to evaluate confidence
sets for $Lf$. A set $\mathcal{C}=\mathcal{C}(Y)$ is called a
$100\cdot (1-\alpha)\%$ confidence set for $Lf$ if
$\inf_{f\in\mathcal{F}} P_f\left(Lf\in\mathcal{C}\right)\ge 1-\alpha$. We denote
the collection of all $100\cdot (1-\alpha)\%$ confidence sets by
$\mathcal{I}_{\alpha}$.

We can compare performance of confidence sets at a particular $f\in\mathcal{F}$
using expected length, $E_{f}\lambda(\mathcal{C})$, where $\lambda$ is Lebesgue
measure. Allowing confidence sets to have arbitrary form may make them difficult
to interpret or even compute. One way of avoiding this is to restrict attention
to confidence sets that take the form of a fixed-length confidence interval
(CI), an interval of the form $[\hat L-\chi,\hat L+\chi]$ for some estimate
$\hat L$ and nonrandom $\chi$ (for instance, in the RD
model~\eqref{eq:fixed_design_eq}, $\chi$ may depend on the running variable
$x_{i}$ and $\sigma^{2}(x_{i})$, but not on $y_{i}$). Let
\begin{equation*}
  \Rfl{\alpha}(\hat L)=\min\big\{\chi\colon \inf_{f\in\mathcal{F}}
  P_f\big(\abs{\hat L-Lf}\le \chi \big)\ge 1-\alpha \big\}
\end{equation*}
denote the half-length of the shortest fixed-length $100\cdot(1-\alpha)\%$ CI
centered around an estimator $\hat L$. Fixed-length CIs are easy to compare: one
simply prefers the one with the shortest half-length. On the other hand, their
length cannot ``adapt'' to reflect greater precision for different functions
$f\in\mathcal{F}$. To address this concern, in \Cref{sec:two-sided-cis_main}, we
compare the length of fixed-length CIs to sharp bounds on the optimal expected
length $\inf_{\mathcal{C}\in\mathcal{I}_{\alpha}}E_{f}(\mathcal{C})$.

If $\mathcal{C}$ is restricted to take the form of a one-sided CI
$\hor{\hat c,\infty}$, we cannot use expected length as a criterion. We
therefore measure performance at a particular parameter $f$ using the $\beta$th
quantile of their excess length $Lf-\hat c$, which we denote by
$q_{f,\beta}(Lf-\hat c)$. To measure performance globally over some set
$\mathcal{G}$, we use the maximum $\beta$th quantile of the excess length,
\begin{equation}\label{eq:excess_length_eq}
  \Rlower{\beta}(\hat c,\mathcal{G})
  =\sup_{g\in\mathcal{G}} q_{g,\beta}(Lg-\hat c).
\end{equation}
If $\mathcal{G}=\mathcal{F}$, minimizing $\Rlower{\beta}(\hat c,\mathcal{F})$
over one-sided CIs in the set $\mathcal{I}_{\alpha}$ gives minimax excess
length. If $\mathcal{G}\subset\mathcal{F}$ is a class of smoother functions,
minimizing $\Rlower{\beta}(\hat c,\mathcal{G})$ yields CIs that direct power:
they achieve good performance when $f$ is smooth, while maintaining coverage
over all of $\mathcal{F}$. A CI that achieves good performance over multiple
classes $\mathcal{G}$ is said to be ``adaptive'' over these classes. In
\Cref{sec:onesided-cis}, we give sharp bounds on~\eqref{eq:excess_length_eq} for
a single class $\mathcal{G}$, which gives a benchmark for adapting over multiple
classes \citep[cf.][]{CaLo04}.

\subsection{Affine estimators and optimal bias-variance
  tradeoff}\label{affine_estimators_sec}

Many popular estimators are linear functions of the outcome variable $Y$, and we
will see below that optimal or near-optimal CIs are based on estimators of this
form. In the general framework~\eqref{eq:donoho-model}, linear estimators take
the form $\langle w,Y\rangle$ for some non-random $w\in\mathcal{Y}$, which
simplifies to~\eqref{eq:rd-linear-estimator} in the RD model. It will be
convenient to allow for a recentering by some constant $a\in\mathbb{R}$, which
leads to an affine estimator $\hat L=a+\langle w,Y\rangle$.

For any estimator $\hat L$, let
$\maxbias_{\mathcal{G}}(\hat{L})=\sup_{f\in\mathcal{G}}E_{f}(\hat{L}-Lf)$ and
$\minbias_{\mathcal{G}}(\hat{L})=\inf_{f\in\mathcal{G}}E_{f}(\hat{L}-Lf)$. An
affine estimator $\hat L=a+\langle w,Y\rangle$ follows a normal distribution
with mean $E_f \hat L=a+\langle w, Kf\rangle$ and variance
$\text{var}(\hat L)=\|w\|^2\sigma^2$, which does not depend on $f$. Thus, the
set of possible distributions for $\hat L-Lf$ as $f$ varies over a given convex
set $\mathcal{G}$ is given by the set of normal distributions with variance
$\|w\|^2\sigma^2$ and mean between $\minbias_{\mathcal{G}}(\hat L)$ and
$\maxbias_{\mathcal{G}}(\hat{L})$. It follows that a one-sided CI based on an
affine estimator $\hat L$ is given by
\begin{equation}\label{affine_oci}
  \hor{\hat c,\infty}\qquad
  \hat c=\hat L-\maxbias_{\mathcal{F}}(\hat L)-\sd(\hat L)z_{1-\alpha},
\end{equation}
with $z_{1-\alpha}$ denoting the $1-\alpha$ quantile of a standard normal
distribution, and that its worst-case $\beta$th quantile excess length over a
convex class $\mathcal{G}$ is
\begin{align}\label{affine_oci_qbeta}
  \Rlower{\beta}(\hat c,\mathcal{G})
  =\maxbias_{\mathcal{F}}(\hat L)-\minbias_{\mathcal{G}}(\hat L)+\sd(\hat L)(z_{1-\alpha}+z_\beta).
\end{align}
The shortest fixed-length CI centered at the affine estimator $\hat L$ is given
by
\begin{equation}\label{affine_flci}
  \hat L\pm \chi_\alpha(\hat L),
  \qquad
  \chi_\alpha(\hat L)=\cv_{\alpha}\left(\frac{
  \max\{|\maxbias_{\mathcal{F}}(\hat L)|,|\minbias_{\mathcal{F}}(\hat L)|\}}{
  \sd(\hat L)}\right)
  \cdot \sd(\hat L),
\end{equation}
where $\cv_\alpha(t)$ is the $1-\alpha$ quantile of the absolute value of a
$\mathcal{N}(t,1)$ random variable, as tabulated in Table~\ref{tab:cv-b}.

The fact that optimal CIs turn out to be based on affine estimators reduces the
derivation of optimal CIs to bias-variance calculations: since the performance
of CIs based on affine estimators depends only on the variance and worst-case
bias, one simply minimizes worst-case bias subject to a bound on variance, and
then trades off bias and variance in a way that is optimal for the given
criterion. The main tool for doing this is the ordered modulus of continuity
between $\mathcal{F}$ and $\mathcal{G}$ \citep{CaLo04},
\begin{equation*}
  \omega(\delta;\mathcal{F},\mathcal{G})
  =\sup \left\{Lg-Lf\colon \|K(g-f)\|\le
    \delta, f\in\mathcal{F},g\in\mathcal{G}\right\}
\end{equation*}
for any sets $\mathcal{F}$ and $\mathcal{G}$ with a non-empty intersection (so
that the set over which the supremum is taken is non-empty). When
$\mathcal{G}=\mathcal{F}$, $\omega(\delta;\mathcal{F},\mathcal{F})$ is the
(single-class) modulus of continuity over $\mathcal{F}$ \citep{DoLi91iii}, and
we denote it by $\omega(\delta;\mathcal{F})$. The ordered modulus
$\omega(\cdot;\mathcal{F},\mathcal{G})$ is concave, which implies that the
superdifferential at $\delta$ (the set of slopes of tangent lines at
$(\delta,\omega(\delta;\mathcal{F},\mathcal{G}))$) is nonempty for any
$\delta>0$. Throughout the paper, we let
$\omega'(\delta;\mathcal{F},\mathcal{G})$ denote an (arbitrary unless otherwise
stated) element in this set. Typically, $\omega(\cdot;\mathcal{F},\mathcal{G})$
is differentiable, in which case $\omega'(\delta;\mathcal{F},\mathcal{G})$ is
defined uniquely as the derivative at $\delta$. We use
$g^{*}_{\delta,\mathcal{F},\mathcal{G}}$ and
$f^{*}_{\delta,\mathcal{F},\mathcal{G}}$ to denote a solution to the ordered
modulus problem (assuming it exists), and
$f^*_{M,\delta,\mathcal{F},\mathcal{G}}=(f^*_{\delta,\mathcal{F},\mathcal{G}}+g^*_{\delta,\mathcal{F},\mathcal{G}})/2$
to denote the midpoint.\footnote{See \Cref{translation_invariance_sec}
  for sufficient conditions for differentiability and a discussion of the
  non-differentiable case. Regarding existence of a solution to the modulus
  problem, we verify this directly for our RD application in
  \Cref{rd_modulus_solution_sec}; see also \citet{donoho94}, Lemma 2 for
  a general set of sufficient conditions.}

We will show that optimal decision rules will in general depend on the data $Y$
through an affine estimator of the form
\begin{equation}\label{eq:Ldelta_eq}
  \hat L_{\delta,\mathcal{F},\mathcal{G}}
  =Lf_{M,\delta,\mathcal{F},\mathcal{G}}^{*}+\frac{\omega'(\delta;\mathcal{F},\mathcal{G})}{\delta}
  \left\langle K(g^{*}_{\delta,\mathcal{F},\mathcal{G}}-f^{*}_{\delta,\mathcal{F},\mathcal{G}}),Y-Kf^{*}_{M,\delta,\mathcal{F},\mathcal{G}} \right\rangle,
\end{equation}
with $\delta$ and $\mathcal{G}$ depending on the optimality criterion. When
$\mathcal{F}=\mathcal{G}$, we denote the estimator
$\hat{L}_{\delta,\mathcal{F},\mathcal{F}}$ by $\hat{L}_{\delta,\mathcal{F}}$.
When the sets $\mathcal{F}$ and $\mathcal{G}$ are clear from the context, we use
$\omega(\delta)$, $\hat{L}_{\delta}$, $f_{\delta}^{*}$, $g^{*}_{\delta}$ and
$f^{*}_{M,\delta}$ in place of $\omega(\delta;\mathcal{F},\mathcal{G})$,
$\hat{L}_{\delta,\mathcal{F},\mathcal{G}}$,
$f_{\delta,\mathcal{F},\mathcal{G}}^{*}$,
$g^{*}_{\delta,\mathcal{F},\mathcal{G}}$ and
$f^{*}_{M,\delta,\mathcal{F},\mathcal{G}}$ to avoid notational clutter.

As we show in \Cref{th:max_bias_lemma} in the Appendix, a useful property of
$\hat{L}_{\delta,\mathcal{F},\mathcal{G}}$ is that its maximum bias over
$\mathcal{F}$ and minimum bias over $\mathcal{G}$ are attained at
$f^{*}_{\delta}$ and $g^{*}_{\delta}$, respectively, and are given by
\begin{equation}\label{eq:maxbias-minbias}
  \maxbias_{\mathcal{F}}( \hat{L}_{\delta,\mathcal{F},\mathcal{G}})
  =-\minbias_{\mathcal{G}}( \hat{L}_{\delta,\mathcal{F},\mathcal{G}})
  =\frac{1}{2}\left(\omega(\delta;\mathcal{F},\mathcal{G})
    -\delta\omega'(\delta;\mathcal{F},\mathcal{G})\right).
\end{equation}
Its standard deviation equals
$\sd(\hat{L}_{\delta,\mathcal{F},\mathcal{G}})=\sigma\omega'(\delta;\mathcal{F},\mathcal{G})$,
and doesn't depend on $f$. As remarked by \citet{CaLo04nonconvex}, no estimator
can simultaneously achieve lower maximum bias over $\mathcal{F}$, higher minimum
bias over $\mathcal{G}$, and lower variance than the estimators in the class
$\{\hat{L}_{\delta,\mathcal{F},\mathcal{G}}\}_{\delta>0}$.
Estimators~\eqref{eq:Ldelta_eq} can thus be used to optimally trade off various
levels of bias and variance.

A condition that will play a central role in bounding the gains from directing power
at smooth functions is \emph{centrosymmetry}.
We say that a class $\mathcal{F}$ is \emph{centrosymmetric} if
$f\in\mathcal{F}\Longrightarrow -f\in\mathcal{F}$. Under centrosymmetry, the
functions that solve the single-class modulus problem can be taken to satisfy
$g^{*}_{\delta}=-f^{*}_{\delta}$, and the modulus is given by
\begin{equation}\label{eq:centrosymmetric_modulus_eq}
  \omega(\delta;\mathcal{F})= \sup \left\{2Lf\colon \|K f\|\le \delta/2,
    f\in\mathcal{F}\right\}.
\end{equation}
Since $f_{\delta}^*=-g_{\delta}^*$, $f^*_{M,\delta}$ is the zero function and
$\hat L_{\delta,\mathcal{F}}$ is linear:
\begin{equation}\label{eq:L-delta-centrosymmetry}
  \hat{L}_{\delta,\mathcal{F}}
  =\frac{2\omega'(\delta;\mathcal{F})}{\delta}
  \langle Kg^*_{\delta},Y\rangle.
\end{equation}
In the RD model~\eqref{eq:fixed_design_eq} the class
$\mathcal{F}_{RDT,p}(C)$ is centrosymmetric, and the estimator
$\hat L_{\delta,\mathcal{F}_{RDT,p}(C)}$ takes the form $\hat L_{h_+,h_-}$ given
in~\eqref{eq:rd-linear-estimator} for a certain class of weights $w_+(x,h_+)$
and $w_-(x,h_-)$, with the smoothing parameters $h_+$ and $h_-$ both determined
by $\delta$ (see \Cref{sec:addit-deta-rd}).

\subsection{Optimal one-sided CIs}\label{sec:onesided-cis}

Given $\beta$, a one-sided CI that minimizes~\eqref{eq:excess_length_eq} among
all one-sided CIs with level $1-\alpha$ is based on
$\hat{L}_{\delta_{\beta};\mathcal{F},\mathcal{G}}$, where
$\delta_{\beta}=\sigma(z_{\beta}+z_{1-\alpha})$.

\begin{theorem}\label{th:one_side_adapt_thm}
  Let $\mathcal{F}$ and $\mathcal{G}$ be convex with
  $\mathcal{G}\subseteq\mathcal{F}$, and suppose that $f^*_\delta$ and
  $g^*_\delta$ achieve the ordered modulus at $\delta$ with
  $\|K(f^*_\delta-g^*_\delta)\|=\delta$. Let
  \begin{equation*}
    \hat c_{\alpha,\delta,\mathcal{F},\mathcal{G}}
    =\hat{L}_{\delta,\mathcal{F},\mathcal{G}}-
    \maxbias_{\mathcal{F}}(\hat{L}_{\delta,\mathcal{F},\mathcal{G}})-z_{1-\alpha}\sigma
    \omega'(\delta;\mathcal{F},\mathcal{G}).
  \end{equation*}
  Then $\hat{c}_{\alpha,\delta,\mathcal{F},\mathcal{G}}$ minimizes
  $\Rlower{\beta}(\hat c,\mathcal{G})$ for
  $\beta=\Phi(\delta/\sigma-z_{1-\alpha})$ among all one-sided $1-\alpha$ CIs,
  where $\Phi$ denotes the standard normal cdf. The minimum coverage is taken at
  $f^{*}_{\delta}$ and equals $1-\alpha$. All quantiles of excess length are
  maximized at $g^*_\delta$. The worst case $\beta$th quantile of excess length
  is
  $\Rlower{\beta}(\hat{c}_{\alpha,\delta,\mathcal{F},\mathcal{G}},\mathcal{G})=
  \omega(\delta;\mathcal{F},\mathcal{G})$.
\end{theorem}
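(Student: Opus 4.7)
The plan is to prove the theorem in two halves: an upper bound showing that $\hat c_{\alpha,\delta,\mathcal{F},\mathcal{G}}$ attains $\Rlower{\beta}(\hat c,\mathcal{G})=\omega(\delta;\mathcal{F},\mathcal{G})$ with coverage minimized at $f^*_\delta$, and a matching lower bound showing no other level $1-\alpha$ one-sided CI can do better. Both halves hinge on the fact that the problem has effectively been reduced to bias-variance bookkeeping via the representation (\ref{eq:Ldelta_eq}) and the identity (\ref{eq:maxbias-minbias}).

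For the upper bound, I would note that $\hat c_{\alpha,\delta,\mathcal{F},\mathcal{G}}$ has the form (\ref{affine_oci}) with affine estimator $\hat L=\hat L_{\delta,\mathcal{F},\mathcal{G}}$, so (\ref{affine_oci_qbeta}) gives the worst-case $\beta$ quantile of excess length in closed form. Substituting (\ref{eq:maxbias-minbias}) and $\sd(\hat L_{\delta,\mathcal{F},\mathcal{G}})=\sigma\omega'(\delta;\mathcal{F},\mathcal{G})$ yields
\begin{equation*}
\Rlower{\beta}(\hat c_{\alpha,\delta,\mathcal{F},\mathcal{G}},\mathcal{G})=\omega(\delta)-\delta\omega'(\delta)+\sigma\omega'(\delta)(z_{1-\alpha}+z_\beta),
\end{equation*}
which collapses to $\omega(\delta)$ after plugging in $\delta=\sigma(z_{1-\alpha}+z_\beta)$. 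Because $\hat L$ is normal, coverage at $f$ equals $\Phi(z_{1-\alpha}+(\maxbias_{\mathcal{F}}(\hat L)-\bias_f(\hat L))/\sd(\hat L))$, which is minimized when $\bias_f(\hat L)=\maxbias_{\mathcal{F}}(\hat L)$, i.e.\ at $f^*_\delta$ by \Cref{th:max_bias_lemma}, giving exactly $1-\alpha$. The dual calculation for the $\beta$ quantile of $Lg-\hat c$ shows it is maximized where $\bias_g(\hat L)=\minbias_{\mathcal{G}}(\hat L)$, which is at $g^*_\delta$.

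For the lower bound, I would use a testing-duality argument. Fix any level $1-\alpha$ one-sided CI $\hor{\hat c,\infty}$ and define the test $\phi(Y)=\1{\hat c(Y)>Lf^*_\delta}$. Coverage at $f^*_\delta$ forces $E_{f^*_\delta}\phi\le\alpha$. In the Gaussian shift model (\ref{eq:donoho-model}), the Neyman--Pearson lemma bounds the power at the simple alternative $g^*_\delta$ by $\Phi(\|K(g^*_\delta-f^*_\delta)\|/\sigma-z_{1-\alpha})$, and the hypothesis $\|K(f^*_\delta-g^*_\delta)\|=\delta=\sigma(z_{1-\alpha}+z_\beta)$ makes this exactly $\beta$. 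Rewriting $E_{g^*_\delta}\phi\le\beta$ as $P_{g^*_\delta}(Lg^*_\delta-\hat c<\omega(\delta))\le\beta$ (using $Lg^*_\delta-Lf^*_\delta=\omega(\delta)$, which holds at the modulus solution) yields $q_{g^*_\delta,\beta}(Lg^*_\delta-\hat c)\ge\omega(\delta)$, hence $\Rlower{\beta}(\hat c,\mathcal{G})\ge\omega(\delta)$ as required. The composite nature of the null $\mathcal{F}$ costs nothing here: a level $1-\alpha$ CI is a fortiori size $\alpha$ at the single point $f^*_\delta$, and the simple--simple NP bound is exactly what is needed.

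The main obstacle I foresee is a minor quantile technicality: passing from $P(X<t)\le\beta$ to $q_\beta(X)\ge t$. With the left-continuous convention $q_\beta(X)=\inf\{s:P(X\le s)\ge\beta\}$, the passage is clean whenever the distribution of $Lg^*_\delta-\hat c$ has no atom at $\omega(\delta)$; otherwise I would apply the Neyman--Pearson bound with $\omega(\delta)-\varepsilon$ in place of $\omega(\delta)$ and send $\varepsilon\downarrow 0$, since the bound $\beta$ does not depend on $\varepsilon$. All other pieces --- existence of the modulus pair, the affine/Gaussian structure of $\hat L_{\delta,\mathcal{F},\mathcal{G}}$, and the bias identities --- are already supplied by the setup of Section~\ref{affine_estimators_sec} and \Cref{th:max_bias_lemma}.
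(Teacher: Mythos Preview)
Your approach is essentially the paper's: validity and the value $\omega(\delta)$ via the bias identities and \Cref{th:max_bias_lemma}, then a testing-duality lower bound via Neyman--Pearson at the pair $(f^*_\delta,g^*_\delta)$. The only substantive difference is in handling the quantile technicality: your $\varepsilon$ workaround as stated (``the bound $\beta$ does not depend on $\varepsilon$'') does not quite close the gap, because a non-strict power bound $\le\beta$ cannot be sharpened by passing to $\omega(\delta)-\varepsilon$ alone; the paper instead takes a strict convex combination $\tilde f=(1-\lambda)f^*_\delta+\lambda g^*_\delta\in\mathcal{F}$, for which $\|K(g^*_\delta-\tilde f)\|=(1-\lambda)\delta<\delta$ and hence the NP power bound at $g^*_\delta$ is \emph{strictly} below $\beta$, yielding a clean contradiction with $P_{g^*_\delta}(\tilde c>L\tilde f)\ge\beta$.
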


Since the worst-case bias of $\hat L_{\delta,\mathcal{F},\mathcal{G}}$ is given
by~\eqref{eq:maxbias-minbias}, and its standard deviation equals
$\sigma\omega'(\delta;\mathcal{F},\mathcal{G})$, it can be seen that
$\hat{c}_{\alpha,\delta,\mathcal{F},\mathcal{G}}$ takes the form given
in~\eqref{affine_oci}, and its worst-case excess length
follows~\eqref{affine_oci_qbeta}. The assumption that the modulus is achieved
with $\|K(f^*_\delta-g^*_\delta)\|=\delta$ rules out degenerate cases: if
$\|K(f^*_\delta-g^*_\delta)\|<\delta$, then relaxing this constraint does not
increase the modulus, which means that
$\omega'(\delta;\mathcal{F},\mathcal{G})=0$ and the optimal CI does not depend
on the data.

Implementing the CI from \Cref{th:one_side_adapt_thm} requires the researcher to
choose a quantile $\beta$ to optimize, and to choose the set $\mathcal{G}$.
There are two natural choices for $\beta$. If the objective is to optimize the
performance of the CI ``on average'', then optimizing the median excess length
($\beta=0.5$) is a natural choice. Since for any CI $\hor{\hat{c},\infty}$ such
that $\hat{c}$ is affine in the data $Y$, the median and expected excess lengths
coincide, and since $\hat{c}_{\alpha,\delta,\mathcal{F},\mathcal{G}}$ is affine
in the data, setting $\beta=0.5$ also has the advantage that it minimizes the
expected excess length among affine CIs. Alternatively, if the CI is
being computed as part of a power analysis, then setting $\beta=0.8$ is natural,
as, under conditions given in \Cref{translation_invariance_sec}, it
translates directly to statements about 80\% power, a standard benchmark in such
analyses \citep{cohen88}.

For the set $\mathcal{G}$, there are two leading choices. First, setting
$\mathcal{G}=\mathcal{F}$ yields minimax CIs:
\begin{corollary}[One-sided minimax CIs]\label{th:one_sided_minimax_thm}
  Let $\mathcal{F}$ be convex, and suppose that $f^{*}_{\delta}$ and
  $g^{*}_{\delta}$ achieve the single-class modulus at $\delta$ with
  $\|K(f^*_\delta-g^*_\delta)\|=\delta$. Let
  \begin{equation*}
    \hat c_{\alpha,\delta,\mathcal{F}} =
    \hat{L}_{\delta,\mathcal{F}}-\frac{1}{2}\left(\omega(\delta;\mathcal{F})-\delta
    \omega'(\delta;\mathcal{F}) \right) -z_{1-\alpha}\sigma
    \omega'(\delta;\mathcal{F}).
  \end{equation*}
  Then, for $\beta=\Phi(\delta/\sigma-z_{1-\alpha})$,
  $\hat{c}_{\alpha,\delta,\mathcal{F}}$ minimizes the maximum $\beta$th quantile
  of excess length among all $1-\alpha$ CIs for $Lf$. The minimax excess length
  is given by $\omega(\delta;\mathcal{F})$.
\end{corollary}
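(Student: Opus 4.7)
The plan is to obtain this corollary as a direct specialization of Theorem~\ref{th:one_side_adapt_thm} to the case $\mathcal{G}=\mathcal{F}$. Since $\mathcal{F}$ is convex and trivially satisfies $\mathcal{F}\subseteq\mathcal{F}$, and the nondegeneracy condition $\|K(f^*_\delta-g^*_\delta)\|=\delta$ carries over verbatim (now interpreted for the single-class modulus $\omega(\delta;\mathcal{F})=\omega(\delta;\mathcal{F},\mathcal{F})$), the hypotheses of Theorem~\ref{th:one_side_adapt_thm} are satisfied. I therefore apply it with $\mathcal{G}=\mathcal{F}$.

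The only algebraic step is to rewrite the lower endpoint in the simplified form given in the corollary. Theorem~\ref{th:one_side_adapt_thm} delivers
\begin{equation*}
\hat c_{\alpha,\delta,\mathcal{F},\mathcal{F}} = \hat{L}_{\delta,\mathcal{F}} - \maxbias_{\mathcal{F}}\bigl(\hat{L}_{\delta,\mathcal{F}}\bigr) - z_{1-\alpha}\sigma\,\omega'(\delta;\mathcal{F}).
\end{equation*}
I would then invoke the bias identity~\eqref{eq:maxbias-minbias}, which specializes in the single-class case $\mathcal{G}=\mathcal{F}$ to
\begin{equation*}
\maxbias_{\mathcal{F}}\bigl(\hat{L}_{\delta,\mathcal{F}}\bigr) = \tfrac{1}{2}\bigl(\omega(\delta;\mathcal{F}) - \delta\,\omega'(\delta;\mathcal{F})\bigr).
\end{equation*}
Substituting this into the preceding display reproduces the formula for $\hat{c}_{\alpha,\delta,\mathcal{F}}$ stated in the corollary.

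Finally, the optimality and minimax-length claims transfer directly from the parent theorem: with $\beta=\Phi(\delta/\sigma-z_{1-\alpha})$, $\hat{c}_{\alpha,\delta,\mathcal{F}}$ minimizes $\Rlower{\beta}(\hat c,\mathcal{F})$ over all level-$(1-\alpha)$ one-sided CIs, and the minimax $\beta$th quantile of excess length equals $\omega(\delta;\mathcal{F},\mathcal{F})=\omega(\delta;\mathcal{F})$. There is no substantive obstacle in this proof: the real content is in Theorem~\ref{th:one_side_adapt_thm} (which handles arbitrary $\mathcal{G}$ via the ordered modulus, hardest-two-point/Neyman–Pearson reasoning, and the convex minimax testing lemma) and in the bias formula of Theorem~\ref{th:max_bias_lemma}; the corollary is essentially a notational rewriting under centrosymmetric-style single-class specialization.
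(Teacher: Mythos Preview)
Your proposal is correct and matches the paper's approach: the paper explicitly states that the proof of this corollary follows immediately from Theorem~\ref{th:one_side_adapt_thm} and the arguments in the main text, and omits it. Your reduction—setting $\mathcal{G}=\mathcal{F}$ and substituting the bias identity~\eqref{eq:maxbias-minbias}—is exactly the intended derivation; one small quibble is that the phrase ``centrosymmetric-style'' in your final sentence is unnecessary, since centrosymmetry plays no role here.
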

The minimax criterion may be considered overly pessimistic: it focuses on
controlling the excess length under the least favorable function. This leads to
the second possible choice for $\mathcal{G}$, a smaller convex class of smoother
functions $\mathcal{G}\subset\mathcal{F}$. The resulting CIs will then achieve
the best possible performance when $f$ is smooth, while maintaining coverage
over all of $\mathcal{F}$. Unfortunately, there is little scope for improvement
for such a CI when $\mathcal{F}$ is centrosymmetric. In particular, suppose that
$g^{*}_{\delta,\mathcal{F},\mathcal{G}}$ is ``sufficiently smooth'' relative to
$\mathcal{F}$, in the sense that
\begin{equation}\label{eq:f-g-in-F}
  f-g^{*}_{\delta,\mathcal{F},\mathcal{G}}\in\mathcal{F}\quad
  \text{for all $f\in\mathcal{F}$.}
\end{equation}
Since $\mathcal{F}$ is centrosymmetric, this condition is equivalent to the
requirement that the sets
$\{f-g^{*}_{\delta,\mathcal{F},\mathcal{G}}\colon f\in\mathcal{F}\}$ and
$\mathcal{F}$ are the same.\footnote{We thank a referee for pointing this out.}
For instance,~\eqref{eq:f-g-in-F} holds if $\mathcal{G}$ contains the zero
function only. In the RD model~\eqref{eq:fixed_design_eq} with
$\mathcal{F}=\mathcal{F}_{RDT,p}(C)$,~\eqref{eq:f-g-in-F} holds if
$\mathcal{G}=\mathcal{F}_{RDT,p}(0)$, the class of piecewise polynomial
functions.

\begin{corollary}\label{th:centrosymmetric_adaptation_corollary}
  Let $\mathcal{F}$ be centrosymmetric, and let
  $\mathcal{G}\subseteq\mathcal{F}$ be any convex set such that the solution to
  the ordered modulus problem exists and satisfies~\eqref{eq:f-g-in-F} with
  $\|K(f^*_{\delta_\beta}-g^*_{\delta_\beta})\|=\delta_\beta$, where
  $\delta_{\beta}=\sigma(z_{\beta}+z_{1-\alpha})$. Then the one-sided CI
  $\hat{c}_{\alpha,\delta_{\beta},\mathcal{F}}$ that is minimax for the
  $\beta$th quantile also optimizes
  $\Rlower{\tilde{\beta}}(\hat{c};\mathcal{G})$, where
  $\tilde{\beta}=\Phi((z_{\beta}-z_{1-\alpha})/2)$. In particular,
  $\hat{c}_{\alpha,\delta_{\beta},\mathcal{F}}$ optimizes
  $\Rlower{\tilde{\beta}}(\hat c;\left\{0\right\})$. Moreover, the efficiency of
  $\hat{c}_{\alpha,\delta_{\beta},\mathcal{F}}$ for the $\beta$th quantile of
  maximum excess length over $\mathcal{G}$ is given by
  \begin{equation}\label{eq:minimax-onesidedCI-efficiency}
    \frac{\inf_{\hat{c}\colon \hor{\hat{c},\infty}\in\mathcal{I}_{\alpha}}
      \Rlower{\beta}(\hat{c}, \mathcal{G})}{
      \Rlower{\beta}(\hat{c}_{\alpha,\delta_{\beta},\mathcal{F}}, \mathcal{G})}=
    \frac{ \omega(\delta_{\beta};\mathcal{F},\mathcal{G})}{
      \Rlower{\beta}(\hat{c}_{\alpha,\delta_{\beta},\mathcal{F}}, \mathcal{G})}
    = \frac{\omega(2\delta_{\beta};\mathcal{F})}{\omega(\delta_{\beta};
      \mathcal{F})+\delta_{\beta}\omega'(\delta_{\beta};\mathcal{F})}.
  \end{equation}
\end{corollary}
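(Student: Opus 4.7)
The plan is to derive both conclusions from a single identity, $\omega(\delta;\mathcal{F},\mathcal{G})=\omega(2\delta;\mathcal{F})/2$, that holds at the relevant $\delta$'s under the corollary's hypotheses, and then to do bias--variance accounting via~\eqref{affine_oci_qbeta} and~\eqref{eq:maxbias-minbias}. For the identity, I take the ordered-modulus solution $(f^{*}_\delta,g^{*}_\delta)$ and set $h:=g^{*}_\delta-f^{*}_\delta$. Applying~\eqref{eq:f-g-in-F} with $f=f^{*}_\delta$ gives $f^{*}_\delta-g^{*}_\delta\in\mathcal{F}$, and centrosymmetry yields $h\in\mathcal{F}$; since $\|Kh\|=\delta$ and $Lh=\omega(\delta;\mathcal{F},\mathcal{G})$, feasibility of $h$ in the centrosymmetric representation~\eqref{eq:centrosymmetric_modulus_eq} of $\omega(2\delta;\mathcal{F})$ gives $2Lh\le\omega(2\delta;\mathcal{F})$. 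For the reverse bound, let $h^{\#}=g^{*}_{2\delta,\mathcal{F}}$, so that $\|Kh^{\#}\|=\delta$ and $2Lh^{\#}=\omega(2\delta;\mathcal{F})$; the pair $(g,f):=(g^{*}_\delta,\,g^{*}_\delta-h^{\#})$ is feasible for the ordered-modulus problem at $\delta$ (feasibility of $f$ follows from~\eqref{eq:f-g-in-F} applied with $f=h^{\#}$, plus centrosymmetry) and attains $Lg-Lf=Lh^{\#}=\omega(2\delta;\mathcal{F})/2$, giving equality. A useful byproduct is that $K(g^{*}_\delta-f^{*}_\delta)$ coincides with $Kg^{*}_{2\delta,\mathcal{F}}$, and hence $\omega'(\delta;\mathcal{F},\mathcal{G})=\omega'(2\delta;\mathcal{F})$.

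For the first claim, I would invoke~\Cref{th:one_side_adapt_thm} at $\delta=\delta_{\tilde\beta}=\delta_\beta/2$: the optimal $\tilde\beta$-quantile CI over $\mathcal{G}$ is $\hat c_{\alpha,\delta_{\tilde\beta},\mathcal{F},\mathcal{G}}$, with excess length $\omega(\delta_{\tilde\beta};\mathcal{F},\mathcal{G})=\omega(\delta_\beta;\mathcal{F})/2$. I then verify that $\hat c_{\alpha,\delta_{\tilde\beta},\mathcal{F},\mathcal{G}}=\hat c_{\alpha,\delta_\beta,\mathcal{F}}$ by direct comparison: using the byproduct above, the linear-in-$Y$ part of $\hat L_{\delta_{\tilde\beta},\mathcal{F},\mathcal{G}}$ has coefficient $\omega'(\delta_{\tilde\beta};\mathcal{F},\mathcal{G})/\delta_{\tilde\beta}=2\omega'(\delta_\beta;\mathcal{F})/\delta_\beta$ applied to $\langle Kg^{*}_{\delta_\beta,\mathcal{F}},Y\rangle$, exactly matching $\hat L_{\delta_\beta,\mathcal{F}}$; so the two estimators differ only by a constant, and a short bookkeeping argument using~\eqref{eq:maxbias-minbias} shows this constant is exactly offset by the smaller maximum-bias correction in $\hat c_{\alpha,\delta_{\tilde\beta},\mathcal{F},\mathcal{G}}$. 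The specialization to $\mathcal{G}=\{0\}$ requires no extra argument since~\eqref{eq:f-g-in-F} is vacuous when $g^{*}_{\delta,\mathcal{F},\{0\}}=0$.

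For the efficiency formula~\eqref{eq:minimax-onesidedCI-efficiency}, combining~\eqref{affine_oci_qbeta} and~\eqref{eq:maxbias-minbias} with $z_{1-\alpha}+z_\beta=\delta_\beta/\sigma$ gives $R_\beta(\hat c_{\alpha,\delta_\beta,\mathcal{F}},\mathcal{G})=\tfrac{1}{2}(\omega(\delta_\beta;\mathcal{F})+\delta_\beta\omega'(\delta_\beta;\mathcal{F}))-\minbias_{\mathcal{G}}(\hat L_{\delta_\beta,\mathcal{F}})$. Combined with the identity at $\delta_\beta$ and~\Cref{th:one_side_adapt_thm}, the claimed formula reduces to showing $\minbias_\mathcal{G}(\hat L_{\delta_\beta,\mathcal{F}})=0$. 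To prove this, I use the KKT conditions for the ordered modulus at $\delta_{\tilde\beta}$: the Lagrange multiplier is $\mu^{*}=\omega'(\delta_\beta;\mathcal{F})/\delta_\beta$, and the byproduct above ensures $2\mu^{*}K(g^{*}_{\delta_{\tilde\beta}}-f^{*}_{\delta_{\tilde\beta}})=2\mu^{*}Kg^{*}_{\delta_\beta,\mathcal{F}}$ equals the weight of $\hat L_{\delta_\beta,\mathcal{F}}$. The $g$-part KKT then gives $\bias_g(\hat L_{\delta_\beta,\mathcal{F}})\ge\bias_{g^{*}_{\delta_{\tilde\beta}}}(\hat L_{\delta_\beta,\mathcal{F}})$ for every $g\in\mathcal{G}$; decomposing $g^{*}_{\delta_{\tilde\beta}}=f^{*}_{\delta_{\tilde\beta}}+g^{*}_{\delta_\beta,\mathcal{F}}$, the $f$-part KKT identifies $\bias_{f^{*}_{\delta_{\tilde\beta}}}(\hat L_{\delta_\beta,\mathcal{F}})=\maxbias_\mathcal{F}(\hat L_{\delta_\beta,\mathcal{F}})$, while the direct evaluation using $\|Kg^{*}_{\delta_\beta,\mathcal{F}}\|=\delta_\beta/2$ and $Lg^{*}_{\delta_\beta,\mathcal{F}}=\omega(\delta_\beta;\mathcal{F})/2$ gives $\bias_{g^{*}_{\delta_\beta,\mathcal{F}}}(\hat L_{\delta_\beta,\mathcal{F}})=-\maxbias_\mathcal{F}(\hat L_{\delta_\beta,\mathcal{F}})$; the two cancel, yielding $\minbias_\mathcal{G}=0$. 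The main obstacle is keeping straight the different factor-of-two Lagrangian conventions between the ordered and single-class moduli, but the identity and its byproduct lock them together.
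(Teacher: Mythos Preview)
Your proposal is correct and follows essentially the same route as the paper: establish the identity $\omega(\delta;\mathcal{F},\mathcal{G})=\tfrac{1}{2}\omega(2\delta;\mathcal{F})$ (the paper routes this through $\mathcal{G}=\{0\}$, but the argument is the same), deduce that $\hat L_{\delta,\mathcal{F},\mathcal{G}}$ and $\hat L_{2\delta,\mathcal{F}}$ differ by a constant so that $\hat c_{\alpha,\delta,\mathcal{F},\mathcal{G}}=\hat c_{\alpha,2\delta,\mathcal{F}}$, and then show $\minbias_{\mathcal{G}}(\hat L_{\delta_\beta,\mathcal{F}})=0$ to get the efficiency formula.

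The only place you work harder than necessary is the last step. Once you have the constant-shift relation $\hat L_{\delta_{\tilde\beta},\mathcal{F},\mathcal{G}}=\hat L_{\delta_\beta,\mathcal{F}}-\tfrac{1}{2}\maxbias_{\mathcal{F}}(\hat L_{\delta_\beta,\mathcal{F}})$ (which you have already established for the first claim), $\minbias_{\mathcal{G}}(\hat L_{\delta_\beta,\mathcal{F}})=0$ drops out immediately: by~\eqref{eq:maxbias-minbias} applied to $\hat L_{\delta_{\tilde\beta},\mathcal{F},\mathcal{G}}$ together with the identity, $\minbias_{\mathcal{G}}(\hat L_{\delta_{\tilde\beta},\mathcal{F},\mathcal{G}})=-\tfrac{1}{2}\maxbias_{\mathcal{F}}(\hat L_{\delta_\beta,\mathcal{F}})$, and the shift constant cancels this exactly. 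Your KKT/decomposition argument arrives at the same conclusion but via a longer path; it is correct, just redundant given what you have already shown.
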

The first part of \Cref{th:centrosymmetric_adaptation_corollary} states that
minimax CIs that optimize a particular quantile $\beta$ will also minimize the
maximum excess length over $\mathcal{G}$ at a different quantile
$\tilde{\beta}$. For instance, a CI that is minimax for median excess length
among 95\% CIs also optimizes $\Phi(-z_{0.95}/2)\approx 0.205$ quantile under
the zero function. Vice versa, the CI that optimizes median excess length under
the zero function is minimax for the $\Phi(2z_{0.5}+z_{0.95})=0.95$ quantile.

The second part of~\Cref{th:centrosymmetric_adaptation_corollary} gives the
exact cost of optimizing the ``wrong'' quantile $\tilde{\beta}$. Since the
one-class modulus is concave, $\delta \omega'(\delta)\leq \omega(\delta)$, and
we can lower bound the efficiency of
$\hat{c}_{\alpha,\delta_{\beta},\mathcal{F}}$ given
in~\eqref{eq:minimax-onesidedCI-efficiency} by
$\omega(2\delta_{\beta})/(2\omega(\delta_{\beta}))\geq 1/2$. Typically, the
efficiency is much higher. In particular, in the regression
model~\eqref{eq:fixed_design_eq}, the one-class modulus satisfies
\begin{equation}\label{eq:asymptotoc-modulus}
  \omega(\delta;\mathcal{F})=n^{-r/2}A\delta^{r}(1+o(1))
\end{equation}
for many choices of $\mathcal{F}$ and $L$, as $n\to \infty$ for some constant
$A$, where $r/2$ is the rate of convergence of the minimax root MSE\@. This is
the case under regularity conditions in the RD model with $r=2p/(2p+1)$ by
Lemma~\ref{rd_uniform_modulus_convergence_lemma} (see \citealp{DoLo92}, for
other cases where~\eqref{eq:asymptotoc-modulus} holds). In this case,
\eqref{eq:minimax-onesidedCI-efficiency} evaluates to
$\frac{2^{r}}{1+r}(1+o(1))$, so that the asymptotic efficiency depends only on
$r$. Figure~\ref{fig:asymptotic-adaptivity-bounds} plots the asymptotic
efficiency as a function of $r$. Since adapting to the zero function easier than
adapting to any set $\mathcal{G}$ that includes it, if $\mathcal{F}$ is convex
and centrosymmetric, ``directing power'' yields very little gain in excess
length no matter how optimistic one is about where to direct it.

This result places a severe bound on the scope for adaptivity in settings in
which $\mathcal{F}$ is convex and centrosymmetric: any CI that performs better
than the minimax CI by more than the ratio
in~\eqref{eq:minimax-onesidedCI-efficiency} must fail to control coverage at
some $f\in\mathcal{F}$.

\subsection{Two-sided CIs}\label{sec:two-sided-cis_main}

A fixed-length CI based on $\hat{L}_{\delta,\mathcal{F}}$ can be computed by
plugging its worst-case bias~\eqref{eq:maxbias-minbias}
into~\eqref{affine_flci},\footnote{We assume that
  $\omega'(\delta;\mathcal{F})=\sd(\hat{L}_{\delta,\mathcal{F}})/\sigma\neq 0$.
  Otherwise, the estimator $\hat{L}_{\delta,\mathcal{F}}$ doesn't depend on the
  data, and the only valid fixed-length CI around it is the trivial CI that
  reports the whole parameter space for $Lf$.}
\begin{equation*}
  \hat{L}_{\delta,\mathcal{F}}\pm \chi_\alpha(\hat{L}_{\delta,\mathcal{F}}),
  \qquad
  \chi_\alpha(\hat{L}_{\delta,\mathcal{F}})=\cv_{\alpha}\left(\textstyle\frac{
      \omega(\delta;\mathcal{F})}{
      2\sigma\omega'(\delta;\mathcal{F})}
    -\frac{
      \delta }{
      2\sigma}
  \right)
  \cdot \sigma\omega'(\delta;\mathcal{F}).
\end{equation*}
The optimal $\delta$ minimizes the half-length,
$\delta_{\chi}=\argmin_{\delta>0}\chi_{\alpha}(\hat{L}_{\delta,\mathcal{F}})$.
It follows from \citet{donoho94} that this CI is the shortest possible in the
class of fixed-length CIs based on affine estimators. Just as with minimax
one-sided CIs, one may worry that since its length is driven by the least
favorable functions, restricting attention to fixed-length CIs may be costly
when the true $f$ is smoother. The next result characterizes confidence sets
that optimize expected length at a single function $g$, and thus bounds
 the possible performance gain.
\begin{theorem}\label{th:two_sided_adaptation_to_function_thm}
  Let $g\in\mathcal{F}$, and assume that a minimizer $f_{{L_0}}$ of $\|K(g-f)\|$
  subject to $Lf={L_0}$ and $f\in\mathcal{F}$ exists for all
  ${L_0}\in\mathbb{R}$. Then the confidence set $\mathcal{C}_{g}$ that minimizes
  $E_g\lambda(\mathcal{C})$ subject to $\mathcal{C}\in\mathcal{I}_{\alpha}$
  inverts the family of tests $\phi_{L_0}$ that reject for large values of
  $\langle K(g- f_{L_0}),Y\rangle$ with critical value given by the $1-\alpha$
  quantile under $f_{L_0}$. Its expected length is
  \begin{equation*}
    E_{g}[\lambda(\mathcal{C}_{g})]=
    (1-\alpha)E\left[(\omega(\sigma(z_{1-\alpha}-Z);\mathcal{F},\left\{g\right\})+
      \omega(\sigma(z_{1-\alpha}-Z);\left\{g\right\},\mathcal{F}))\mid Z\leq
      z_{1-\alpha} \right],
  \end{equation*}
  where $Z$ is a standard normal random variable.
\end{theorem}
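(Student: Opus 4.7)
The plan is to reduce the expected-length minimization to a pointwise testing problem via Pratt's identity, solve each testing problem using the convex-null/simple-alternative machinery from Lemma~\ref{convex_testing_lemma}, and then recognize the resulting integral as the modulus expression in the statement.

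\textbf{Step 1 (Pratt identity and reduction to testing).} By Fubini,
\begin{equation*}
  E_g \lambda(\mathcal{C}) = \int_{\mathbb{R}} P_g(L_0 \in \mathcal{C})\,\dd L_0
  = \int_{\mathbb{R}} (1 - E_g\phi_{L_0})\,\dd L_0,
\end{equation*}
where $\phi_{L_0}(Y)=\1{L_0\notin\mathcal{C}(Y)}$. The coverage requirement $\mathcal{C}\in\mathcal{I}_\alpha$ translates to $E_f\phi_{L_0}\le\alpha$ for all $f\in\mathcal{F}$ with $Lf=L_0$. Thus minimizing expected length is equivalent to pointwise maximization: for each $L_0$, choose the most powerful level-$\alpha$ test of $H_0\colon f\in\mathcal{F}, Lf=L_0$ against the simple alternative $\{g\}$, and invert.

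\textbf{Step 2 (Identify the optimal test).} The null set $\{f\in\mathcal{F}:Lf=L_0\}$ is convex because $\mathcal{F}$ is convex and $L$ is linear. By Lemma~\ref{convex_testing_lemma}, the maximin test against the single alternative $g$ is the Neyman--Pearson test between $g$ and the closest point in the null (with respect to $\|K\cdot\|$), which is exactly $f_{L_0}$. In the Gaussian model~\eqref{eq:donoho-model}, the log-likelihood ratio between $g$ and $f_{L_0}$ is monotone in $\langle K(g-f_{L_0}),Y\rangle$, so the test rejects for large values of this inner product with critical value the $1-\alpha$ quantile under $f_{L_0}$. Since this test already has size $\alpha$ uniformly over the whole convex null (again by Lemma~\ref{convex_testing_lemma}), it is also the most powerful level-$\alpha$ test of the composite null.

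\textbf{Step 3 (Power calculation).} Let $\delta(L_0)=\|K(g-f_{L_0})\|$. Under any $f$, $\langle K(g-f_{L_0}),Y\rangle\sim\mathcal{N}(\langle K(g-f_{L_0}),Kf\rangle,\sigma^2\delta(L_0)^2)$. Subtracting means and using $\langle K(g-f_{L_0}),K(g-f_{L_0})\rangle=\delta(L_0)^2$ gives power under $g$ equal to $\Phi(\delta(L_0)/\sigma-z_{1-\alpha})$. Hence
\begin{equation*}
  E_g\lambda(\mathcal{C}_g) = \int_{\mathbb{R}} \Phi\!\left(z_{1-\alpha}-\delta(L_0)/\sigma\right)\dd L_0.
\end{equation*}

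\textbf{Step 4 (Level-set length via the two ordered moduli).} By convexity of $\mathcal{F}$ and linearity of $L$, the set $\{L_0:\delta(L_0)\le\delta\}$ is convex in $\mathbb{R}$, hence an interval. Its upper endpoint is $\sup\{Lf:\|K(g-f)\|\le\delta,f\in\mathcal{F}\}=Lg+\omega(\delta;\{g\},\mathcal{F})$, and its lower endpoint is $Lg-\omega(\delta;\mathcal{F},\{g\})$, so its length equals $\omega(\delta;\mathcal{F},\{g\})+\omega(\delta;\{g\},\mathcal{F})$. Applying the layer-cake formula to the integrand in Step~3, with $\Phi(z_{1-\alpha}-\delta(L_0)/\sigma)>t$ iff $\delta(L_0)<\sigma(z_{1-\alpha}-\Phi^{-1}(t))$, and noting that for $t>1-\alpha$ the level set is empty, yields
\begin{equation*}
  E_g\lambda(\mathcal{C}_g) = \int_0^{1-\alpha}\!\!\left[\omega(\sigma(z_{1-\alpha}-\Phi^{-1}(t));\mathcal{F},\{g\})+\omega(\sigma(z_{1-\alpha}-\Phi^{-1}(t));\{g\},\mathcal{F})\right]\dd t.
\end{equation*}
The change of variables $t=\Phi(z)$, so that the integration interval $[0,1-\alpha]$ maps to $(-\infty,z_{1-\alpha}]$ with density $\phi(z)/(1-\alpha)$ of $Z$ conditional on $Z\le z_{1-\alpha}$, gives the claimed expression. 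The main obstacle is Step~4: identifying the level set as an interval and matching its endpoints with the two ordered moduli; everything else is essentially Pratt's theorem combined with the already-established convex testing lemma and standard Gaussian calculations.
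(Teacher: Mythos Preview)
Your proof is correct and follows essentially the same approach as the paper: Pratt's identity reduces to pointwise most powerful tests, Lemma~\ref{convex_testing_lemma} identifies the least favorable null point $f_{L_0}$, and the power formula $\Phi(\delta(L_0)/\sigma-z_{1-\alpha})$ is integrated via a layer-cake argument. The only cosmetic difference is that the paper explicitly inverts, writing $\delta_\vartheta=\omega^{-1}(|\vartheta-Lg|;\cdot,\cdot)$ on each side of $Lg$ and then applying the modulus to the inequality $\delta_\vartheta\le\sigma(z_{1-\alpha}-z)$, whereas you directly identify the level set $\{L_0:\delta(L_0)\le\delta\}$ as the interval $[Lg-\omega(\delta;\mathcal{F},\{g\}),\,Lg+\omega(\delta;\{g\},\mathcal{F})]$; these are the same computation.
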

This result solves the problem of ``adaptation to a function'' posed by
\citet{cai_adaptive_2013}, who obtain bounds for this problem if $\mathcal{C}$
is required to be an interval. The theorem uses the observation in
\citet{pratt61} that minimum expected length CIs are obtained by inverting a
family of uniformly most powerful tests of $H_{0}\colon Lf={L_0}$ and
$f\in\mathcal{F}$ against $H_{1}\colon f=g$, which, as shown in the proof, is
given by $\phi_{L_{0}}$; the expression for the expected length of
$\mathcal{C}_{g}$ follows by computing the power of these tests. The assumption
on the existence of the minimizer $f_{L_{0}}$ means that $Lf$ is unbounded over
$\mathcal{F}$, and it is made to simplify the statement; a truncated version of
the same formula holds when $\mathcal{F}$ places a bound on $Lf$.

Directing power at a single function is seldom desirable in practice.
\Cref{th:two_sided_adaptation_to_function_thm} is very useful, however, in
bounding the efficiency of other procedures. In particular, suppose
$f-g\in\mathcal{F}$ for all $f$, so that~\eqref{eq:f-g-in-F} holds with
$\mathcal{G}=\left\{g\right\}$ (such as when $g$ is the zero function), and that
$\mathcal{F}$ is centrosymmetric. Then, by arguments in the proof of
Corollary~\ref{th:centrosymmetric_adaptation_corollary},
$\omega(\delta;\mathcal{F},\left\{g\right\})=
\omega(\delta;\left\{g\right\},\mathcal{F})=
\frac{1}{2}\omega(2\delta;\mathcal{F})$, which yields:
\begin{corollary}\label{th:centrosymmetric_adaptation_twosided}
  Consider the setup in \Cref{th:two_sided_adaptation_to_function_thm} with the
  additional assumption that $\mathcal{F}$ is centrosymmetric and $g$ satisfies
  $f-g\in\mathcal{F}$ for all $f$. Then the efficiency of the fixed-length CI
  around $\hat{L}_{\delta_{\chi},\mathcal{F}}$ at $g$ relative to all confidence
  sets is
  \begin{equation}\label{eq:finite-sample-flci-efficiency}
    \frac{\inf_{\mathcal{C}\in\mathcal{I}_{\alpha}}
      E_{g}\lambda(\mathcal{C}(Y))}{
      2\chi_{\alpha}(\hat{L}_{\delta_{\chi},\mathcal{F}})
    }
    =\frac{(1-\alpha)E\left[\omega(2\sigma(z_{1-\alpha}-Z);\mathcal{F})\mid Z\leq
        z_{1-\alpha} \right]}{
      2\cv_{\alpha}\left(
        \frac{ \omega(\delta_{\chi};\mathcal{F})
        }{2\sigma \omega'(\delta_{\chi};\mathcal{F})}
        -\frac{ \delta_{\chi}   }{2\sigma}
      \right)
      \cdot \sigma \omega'(\delta_{\chi};\mathcal{F})}.
  \end{equation}
\end{corollary}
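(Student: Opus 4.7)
The plan is to assemble the claim in three steps: first invoke \Cref{th:two_sided_adaptation_to_function_thm} for the numerator of~\eqref{eq:finite-sample-flci-efficiency}; second, use centrosymmetry together with the assumption $f-g\in\mathcal{F}$ for all $f\in\mathcal{F}$ to rewrite both ordered moduli that appear in that expression as a single-class modulus; and third, plug in the definition of $\chi_\alpha(\hat L_{\delta_\chi,\mathcal{F}})$ for the denominator.

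The substantive step is the second one. The goal is to show
\begin{equation*}
\omega(\delta;\mathcal{F},\{g\})=\omega(\delta;\{g\},\mathcal{F})=\tfrac12\omega(2\delta;\mathcal{F}).
\end{equation*}
For the first equality, start from
$\omega(\delta;\mathcal{F},\{g\})=\sup\{L(g-f):\|K(g-f)\|\le\delta,\ f\in\mathcal{F}\}$
and change variables by $h=g-f$. The hypothesis $f-g\in\mathcal{F}$ for every $f\in\mathcal{F}$ together with centrosymmetry implies the equivalence $g-h\in\mathcal{F}\Longleftrightarrow h\in\mathcal{F}$: if $g-h\in\mathcal{F}$, apply the hypothesis with $f=g-h$ to get $-h\in\mathcal{F}$, hence $h\in\mathcal{F}$ by centrosymmetry; conversely if $h\in\mathcal{F}$ the hypothesis gives $h-g\in\mathcal{F}$, and centrosymmetry yields $g-h\in\mathcal{F}$. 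Hence
\begin{equation*}
\omega(\delta;\mathcal{F},\{g\})=\sup\{Lh:\|Kh\|\le\delta,\ h\in\mathcal{F}\},
\end{equation*}
and by the centrosymmetric single-class formula~\eqref{eq:centrosymmetric_modulus_eq}, the right-hand side equals $\tfrac12\omega(2\delta;\mathcal{F})$. An identical substitution $\tilde h=h-g$ in $\omega(\delta;\{g\},\mathcal{F})$ yields the same expression, giving the second equality.

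With these identities, the expected length formula in \Cref{th:two_sided_adaptation_to_function_thm} collapses to
\begin{equation*}
\inf_{\mathcal{C}\in\mathcal{I}_\alpha}E_g\lambda(\mathcal{C})=(1-\alpha)E\bigl[\omega(2\sigma(z_{1-\alpha}-Z);\mathcal{F})\mid Z\le z_{1-\alpha}\bigr],
\end{equation*}
which is the numerator of~\eqref{eq:finite-sample-flci-efficiency}. The denominator is obtained directly by substituting the single-class bias formula~\eqref{eq:maxbias-minbias} and $\sd(\hat L_{\delta_\chi,\mathcal{F}})=\sigma\omega'(\delta_\chi;\mathcal{F})$ into~\eqref{affine_flci} at $\delta=\delta_\chi$. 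Dividing yields the stated efficiency ratio.

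The main obstacle, and the only nontrivial bookkeeping, is verifying the bijection $h\leftrightarrow g-h$ on $\mathcal{F}$ under the joint hypotheses; once that is in place, everything else is substitution using results established earlier in the paper.
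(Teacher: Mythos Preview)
Your proposal is correct and follows essentially the same approach as the paper. The paper likewise invokes \Cref{th:two_sided_adaptation_to_function_thm} for the numerator and reduces both ordered moduli to $\tfrac12\omega(2\delta;\mathcal{F})$ via the translation-by-$g$ argument under centrosymmetry (pointing to the proof of \Cref{th:centrosymmetric_adaptation_corollary}, where the identity $\omega(\delta;\mathcal{F},\mathcal{G})=\omega(\delta;\mathcal{F},\{0\})=\tfrac12\omega(2\delta;\mathcal{F})$ is established by exactly the change of variables you describe); your explicit verification of the bijection $h\leftrightarrow g-h$ on $\mathcal{F}$ is the same mechanism, just spelled out directly rather than routed through $\{0\}$.
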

The efficiency ratio~\eqref{eq:finite-sample-flci-efficiency} can easily be
computed in particular applications, and we do so in the empirical application
in \Cref{sec:appl-regr-disc}. When the one-class modulus
satisfies~\eqref{eq:asymptotoc-modulus}, then, as in the case of one-sided CIs,
the asymptotic efficiency of the fixed-length CI around
$\hat{L}_{\delta_{\chi}}$ can be shown to depend only on $r$ and $\alpha$, and
we plot it in Figure~\ref{fig:asymptotic-adaptivity-bounds} for $\alpha=0.05$
(see Theorem~\ref{rd_optimal_estimator_thm_main_text} for the formula). When
$r=1$ (parametric rate of convergence) and $\alpha=0.05$, the asymptotic
efficiency equals $84.99\%$, as in the normal mean example in \citet[Section
5]{pratt61}.

Just like with minimax one-sided CIs, this result places a severe bound on the
scope for improvement over fixed-length CIs when $\mathcal{F}$ is
centrosymmetric. It strengthens the finding in \citet{low97} and \citet{CaLo04},
who derive bounds on the expected length of random length $1-\alpha$ CIs. Their
bounds imply that when $\mathcal{F}$ is constrained only by bounds on a
derivative, the expected length of any CI in $\mathcal{I}_{\alpha}$ must shrink
at the minimax rate $n^{-r/2}$ for any $g$ in the interior of
$\mathcal{F}$.\footnote{One can use
  Theorem~\ref{th:two_sided_adaptation_to_function_thm} to show that this result
  holds even if we don't require $\mathcal{C}$ to take the form of an interval.
  For example, in the RD model with $\mathcal{F}=\mathcal{F}_{RDT,p}(C)$ and
  $g\in\mathcal{F}_{RDT,p}(C_{g})$, $C_{g}<C$, the result follows from lower
  bounding $E_{g}[\lambda(\mathcal{C}_{g})]$ using
  $\omega(\delta;\mathcal{F},\{g\})+\omega(\delta;\{g\},\mathcal{F})\geq
  \omega(2\delta,\mathcal{F}_{RDT,p}(C-C_{g}))$.}
Figure~\ref{fig:asymptotic-adaptivity-bounds} shows that for smooth functions
$g$, this remains true whenever $\mathcal{F}$ is centrosymmetric, even if we
don't require $\mathcal{C}$ to take the form of an interval. Importantly, the
figure also shows that not only is the rate the same as the minimax rate, the
constant must be close to that for fixed-length CIs. Since adapting to a single
function $g$ is easier than adapting to any class $\mathcal{G}$ that includes
it, this result effectively rules out adaptation to subclasses of $\mathcal{F}$
that contain smooth functions.

\section{Empirical illustration}%
\label{sec:appl-regr-disc}

In this section, we illustrate the theoretical results in an RD application
using a dataset from \citet{lee08}. The dataset contains 6,558 observations on
elections to the US House of Representatives between 1946 and 1998. The running
variable $x_{i}\in [-100,100]$ is the Democratic margin of victory (in
percentages) in election $i$. The outcome variable $y_{i}\in [0,100]$ is the
Democratic vote share (in percentages) in the next election. Given the inherent
uncertainty in final vote counts, the party that wins is essentially randomized
in elections that are decided by a narrow margin, so that the RD parameter $Lf$
measures the incumbency advantage for Democrats for elections decided by a
narrow margin---the impact of being the current incumbent party in a
congressional district on the vote share in the next election.

We consider inference under the Taylor class $\mathcal{F}_{RDT,p}(C)$, with
$p=2$. We report results for the optimal estimators and CIs, as well as CIs
based on local linear estimators, using the formulas described in
\Cref{sec:pract-impl} (which follow from the general results in
\Cref{sec:general_results}). We use the preliminary estimates
$\hat{\sigma}^{2}_{+}(x)=12.6^{2}$ and $\hat{\sigma}^{2}_{-}(x)=10.8^{2}$ in
Step~\ref{item:step1}, which are based on residuals form a local linear
regression with bandwidth selected using the \citet{ik12restud} selector. In
Step~\ref{item:step4}, we use the nearest-neighbor variance estimator with
$J=3$.

Let us briefly discuss the interpretation of the smoothness constant $C$ in this
application. By definition of the class $\mathcal{F}_{RDT,2}(C)$, $C$ determines
how large the approximation error can be if we approximate the regression
functions $f_{+}$ and $f_{-}$ on either side of the cutoff by a linear Taylor
approximation at the cutoff: the approximation error is no greater than
$Cx^{2}$. One way of gauging the magnitude of this approximation error is to
look at its effect on prediction error when using the Taylor approximation to
predict the vote share in the next election, and the margin in the previous
election was $x_0$. If one uses the Taylor approximation, the prediction MSE is
at most $C^2x_0^4+\sigma^2(x_0)$, whereas using the true conditional mean to
predict the vote share would lead to prediction MSE $\sigma^2(x_0)$. Thus, using
the true conditional mean leads to a MSE reduction in this prediction problem by
a factor of at most $C^2x_0^4/(C^2x_0^4+\sigma^2(x_0))$. If $C=0.05$ for
instance, this implies MSE reductions of at most $13.6\%$ at $x_{0}=10\%$, and
$71.5\%$ at $x_{0}=20\%$, assuming that $\sigma^{2}(x_{0})$ equals our estimate
of $12.6^{2}$. To the extent that researchers agree that the vote share in the
next election varies smoothly enough with the margin of victory in the current
election to make such large reductions in MSE unlikely, $C=0.05$ is quite a
conservative choice.

Our adaptivity bounds imply that one cannot use data-driven methods to tighten
our CIs, by say, estimating $C$. It is, however, possible to lower-bound the
value of $C$. We derive a simple estimate of this lower bound in
\Cref{sec:estim-lower-bound}, which in the \citeauthor{lee08} data
yields the lower bound estimate $0.017$. As detailed in the appendix, the lower
bound estimate can also be used in a model specification test to check whether a
given chosen value of $C$ is too low. To examine sensitivity of the results to
different choices of $C$, we present the results for the range
$C\in[0.0002, 0.1]$ that, by the argument in the preceding paragraph, includes
most plausible values.

\subsection{Optimal and near-optimal confidence intervals}%
\label{sec:optim-infer-proc}

The top panel in Figure~\ref{fig:lee-minimax} plots the optimal one- and
two-sided CIs defined in \Cref{sec:simple-example}, as well as estimates based
on minimizing the worst-case MSE (see Remark~\ref{MSE_bandwidth_remark}). The
estimates vary between 5.8\% and 7.4\% for $C\geq 0.005$, which is close to the
original Lee estimate of 7.7\% that was based on a global fourth degree
polynomial. Interestingly, the lower and upper limits $\hat{c}_{u}$ and
$\hat{c}_{\ell}$ of the one-sided CIs $\hor{\hat{c}_{\ell},\infty}$ and
$\hol{-\infty,\hat{c}_{u}}$ are not always within the corresponding limits for
the two-sided CIs. The reason for this is that for any given $C$, the optimal
smoothing parameters $h_{+}$ and $h_{-}$ are smaller for one-sided CIs than for
two-sided fixed-length CIs. Thus, when the point estimate decreases with the
amount of smoothing as is the case for low values of $C$, then one-sided CIs are
effectively centered around a lower estimate, which explains why at first the
one-sided CI limits are both below the two-sided limits. This reverses once the
point estimate starts increasing with the amount of smoothing. Furthermore, the
optimal smoothing parameters for the minimax MSE estimator are slightly
\emph{smaller} than those for fixed-length CIs throughout the entire range of
$C$s, albeit by a small amount. This matches the asymptotic predictions in
\citet{ArKo15}.

As we discussed in Remark~\ref{MSE_bandwidth_remark}, it may be desirable to
report an estimate with good MSE, with a CI centered at this estimate (without
reoptimizing the smoothing parameters). The bottom panel in
Figure~\ref{fig:lee-minimax} gives CIs with the smoothing parameters chosen so
that the $\hat{L}_{h_{+},h_{-}}$ minimizes the maximum MSE\@. The limits of the
one-sided CIs are now contained within the two-sided CIs, as they are both based
on the same estimator, although they are less than
$(z_{1-\alpha/2}-z_{1-\alpha})\sd(\hat{L}_{h_{+},h_{-}})$ apart as would be the
case if $\hat{L}_{h_{+},h_{-}}$ were unbiased. Finally, Figure~\ref{fig:lee-ll}
considers CIs based on local linear estimators with triangular kernel; these CIs
are very close to the optimal CIs in Figure~\ref{fig:lee-minimax}.

\subsection{Efficiency comparisons and bounds on adaptation}%
\label{sec:bounds-adaptation}

We now consider the relative efficiency of the different CIs reported in
Figures~\ref{fig:lee-minimax} and~\ref{fig:lee-ll}. To keep the efficiency
comparisons meaningful, we assume that the variance is homoskedastic on either
side of the cutoff, and equal to the initial estimates.

First, comparing half-length and excess length of CIs based on choosing
$h_{+},h_{-}$ to minimize the MSE to that of CIs based on optimally chosen
$h_{+}$ and $h_{-}$, we find that over the range of $C$'s considered, for both
optimal and local linear estimators, two-sided CIs based on MSE-optimal
estimators are at least 99.9\% efficient, and one-sided CIs are at least 97.7\%
efficient. These results are in line with the asymptotic results in
\citet{ArKo15}, which imply that the asymptotic efficiency of two-sided
fixed-length CIs is 99.9\%, and it is 98.0\% for one-sided CIs.

Second, comparing half-length and excess length of the CIs based on local linear
estimates to that of CIs based on optimal estimators, we find that one- and
two-sided CIs based on local linear estimators with triangular kernel are at
least 96.9\% efficient. This is very close to the asymptotic efficiency result
in \citet{ArKo15} that the local linear estimator with a triangular kernel is
97.2\% efficient, independently of the performance criterion.

Third, since the class $\mathcal{F}_{RDT,2}(C)$ is centrosymmetric, we can use
Corollaries~\ref{th:centrosymmetric_adaptation_corollary}
and~\ref{th:centrosymmetric_adaptation_twosided} to bound the scope for
adaptation to the class of piecewise linear functions,
$\mathcal{G}=\mathcal{F}_{RDT,2}(0)$. We find that the relative efficiency of
CIs that minimax the $0.8$ quantile is between 96\% and 97.4\%, and the
efficiency of fixed-length two-sided CIs at any $g\in\mathcal{G}$ is between
95.5\% and 95.9\% for the range of $C$'s considered. This is very close to the
asymptotic efficiency predictions, 96.7\% and 95.7\%, respectively, implied by
Figure~\ref{fig:asymptotic-adaptivity-bounds} (with $r=4/5$). Thus, one cannot
avoid choosing $C$ a priori.

\newpage
\begin{appendices}
  \crefalias{section}{appsec}
  \crefalias{subsection}{appsubsec}

  \section{Proofs for main results}\label{sec:ap:main-proofs}

  This section contains proofs of the results in \Cref{sec:general_results}.
  \Cref{sec:ap:auxiliary_lemmas_sec} contains auxiliary lemmas used in the
  proofs. The proofs of the results in \Cref{sec:general_results} are given in
  the remainder of the section. Proofs of \Cref{th:one_sided_minimax_thm,th:centrosymmetric_adaptation_twosided} follow immediately from the
  theorems and arguments in the main text, and their proofs are omitted.
  We assume throughout this section that the sets $\mathcal{F}$ and $\mathcal{G}$ are convex.

  Before proceeding, we recall that $\omega'(\delta;\mathcal{F},\mathcal{G})$
  was defined in \Cref{sec:general_results} to be an arbitrary element of the
  superdifferential. We denote this set by
  \begin{equation*}
    \partial \omega(\delta;\mathcal{F},\mathcal{G}) =\left\{d\colon \text{for all
    }\eta>0, \omega(\eta;\mathcal{F},\mathcal{G}) \le
    \omega(\delta;\mathcal{F},\mathcal{G})+d(\eta-\delta)\right\}.
  \end{equation*}
  It is nonempty since $\omega(\cdot;\mathcal{F},\mathcal{G})$ is concave---if
  $f^*_\delta,g^*_\delta$ attain the modulus at $\delta$ and similarly for
  $\tilde\delta$, then, for $\lambda\in[0,1]$,
  $f_\lambda=\lambda f^*_\delta+(1-\lambda)f^*_{\tilde \delta}$ and
  $g_\lambda=\lambda g^*_\delta+(1-\lambda)g^*_{\tilde \delta}$ satisfy
  $\|K(g_\lambda-f_\lambda)\|\le \lambda \delta+(1-\lambda)\tilde \delta$ so
  that
  $\omega(\lambda\delta+(1-\lambda)\tilde\delta)\ge
  Lg_\lambda-Lf_\lambda=\lambda
  \omega(\delta)+(1-\lambda)\omega(\tilde\delta)$.

  The definition of $\hat L_{\delta,\mathcal{F},\mathcal{G}}$
  in~\eqref{eq:Ldelta_eq} depends on the choice of
  $\omega'(\delta;\mathcal{F},\mathcal{G})\in \partial
  \omega(\delta;\mathcal{F},\mathcal{G})$ and
  $f^*_{\delta,\mathcal{F},\mathcal{G}},g^*_{\delta,\mathcal{F},\mathcal{G}}$.
  As we explain in \Cref{translation_invariance_sec},
  Theorem~\ref{th:one_side_adapt_thm} holds for any choice of
  $\omega'(\delta;\mathcal{F},\mathcal{G})$ so long as the same element is used
  in the definition of the estimator and worst-case bias formula.
  Regarding the choice of the particular solution
  $f^*_{\delta,\mathcal{F},\mathcal{G}},g^*_{\delta,\mathcal{F},\mathcal{G}}$
  used to construct the estimator and CIs, it turns out that, under the
  conditions of Theorem~\ref{th:one_side_adapt_thm}, the choice does not affect
  the definition of $\hat L_{\delta,\mathcal{F},\mathcal{G}}$ or the CIs based
  on it, as we now explain. If $(f^*_0,g^*_0)$ and $(f^*_1,g^*_1)$ solve the
  modulus problem with $K(g^*_0-f^*_0)\ne K(g^*_1-f^*_1)$, a strict convex
  combination $(f_\lambda,g_\lambda)$ will satisfy
  $\|K(f_\lambda-g_\lambda)\|\le \delta-\eta$ for some $\eta>0$, which implies
  $\omega(\delta-\eta;\mathcal{F},\mathcal{G})=L(g_\lambda-f_\lambda)=\omega(\delta;\mathcal{F},\mathcal{G})$.
  Since the modulus is nondecreasing, this implies that it is constant in a
  neighborhood of $\delta$, so that
  $\partial \omega(\delta;\mathcal{F},\mathcal{G})=\{0\}$. Thus, either
  $K(g^*_\delta-f^*_\delta)$ is defined uniquely or
  $\partial \omega(\delta;\mathcal{F},\mathcal{G})=\{0\}$. In either case,
  $\omega'(\delta;\mathcal{F},\mathcal{G}) \cdot K(f^*_\delta-g^*_\delta)$ is
  defined uniquely up to the choice of
  $\omega'(\delta;\mathcal{F},\mathcal{G})$, which means that, for any two
  estimators $\hat L_0$ and $\hat L_1$ that satisfy the definition of
  $\hat L_{\delta,\mathcal{F},\mathcal{G}}$ with the same choice of
  $\omega'(\delta;\mathcal{F},\mathcal{G})$, we must have $\hat L_1=\hat L_0+a$
  for some constant $a$. The bias formula~\eqref{eq:maxbias-minbias}, which
  follows from Lemma~\ref{th:max_bias_lemma} below, then implies that $a=0$.
  Similarly, the CIs $\hor{\hat c_{\alpha,\mathcal{F},\mathcal{G}},\infty}$ and
  $\hat L_{\delta,\mathcal{F},\mathcal{G}}\pm
  \chi_\alpha(\hat{L}_{\delta,\mathcal{F},\mathcal{G}})$ are defined uniquely up
  to the choice of $\omega'(\delta;\mathcal{F},\mathcal{G})$.

  \subsection{Auxiliary lemmas}%
  \label{sec:ap:auxiliary_lemmas_sec}

  The following lemma extends Lemma 4 in \citet{donoho94} to the two class
  modulus \citep[see also Theorem 2 in][for a similar result in the Gaussian
  white noise model]{CaLo04nonconvex}. The proof is essentially the same as for
  the single class case.

\begin{lemma}\label{th:max_bias_lemma}
  Let $\mathcal{F}$ and $\mathcal{G}$ be convex sets and let $f^*$ and $g^*$
  solve the optimization problem for $\omega(\delta_0;\mathcal{F},\mathcal{G})$
  with $\|K(f^*-g^*)\|=\delta_0$, and let
  $d\in \partial\omega(\delta_0;\mathcal{F},\mathcal{G})$. Then, for all
  $f\in\mathcal{F}$ and $g\in\mathcal{G}$,
  \begin{align}\label{between_class_bias_ineq}
    Lg-Lg^*\le d \frac{\langle K(g^*-f^*),K(g-g^*)\rangle}{\|K(g^*-f^*)\|}
    \text{ and } Lf-Lf^*\ge d \frac{\langle
    K(g^*-f^*),K(f-f^*)\rangle}{\|K(g^*-f^*)\|}.
  \end{align}
  In particular, $\hat L_{\delta,\mathcal{F},\mathcal{G}}$ achieves maximum bias
  over $\mathcal{F}$ at $f^*$ and minimum bias over $\mathcal{G}$ at $g^*$.
\end{lemma}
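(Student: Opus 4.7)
The plan is to exploit the convexity of $\mathcal{F}$ and $\mathcal{G}$ by perturbing $g^*$ and $f^*$ along line segments toward an arbitrary competitor, and then combining the resulting feasibility lower bound on $\omega$ with the supergradient upper bound at $\delta_0$. The whole argument reduces to a first-order calculation of the directional derivative of $\delta\mapsto \|K(g^*-f^*) + t\cdot(\text{perturbation})\|$ at $t=0$.

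\paragraph{The two perturbation inequalities.} For the first inequality, define $g_t = (1-t)g^* + t g \in \mathcal{G}$ for $t\in[0,1]$. Since $(f^*,g_t)$ is feasible for the two-class modulus problem at constraint level $\|K(g_t-f^*)\|$, the definition of $\omega$ yields
\begin{equation*}
  \omega(\|K(g_t-f^*)\|;\mathcal{F},\mathcal{G}) \;\ge\; L g_t - L f^* \;=\; \omega(\delta_0;\mathcal{F},\mathcal{G}) + t(Lg - Lg^*).
\end{equation*}
On the other hand, $d\in\partial\omega(\delta_0;\mathcal{F},\mathcal{G})$ gives
$\omega(\|K(g_t-f^*)\|;\mathcal{F},\mathcal{G})\le \omega(\delta_0;\mathcal{F},\mathcal{G}) + d(\|K(g_t-f^*)\|-\delta_0)$. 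Subtracting, dividing by $t>0$, and sending $t\downarrow 0$, the bound reduces to controlling the one-sided derivative of $t\mapsto \|K(g_t-f^*)\|$ at $0$. Expanding
$\|K(g_t-f^*)\|^2 = \delta_0^2 + 2t\langle K(g^*-f^*),K(g-g^*)\rangle + t^2\|K(g-g^*)\|^2$
and using $\delta_0>0$, this derivative equals $\langle K(g^*-f^*),K(g-g^*)\rangle/\delta_0$, which delivers the first inequality in \eqref{between_class_bias_ineq}. The second inequality follows by the symmetric construction $f_t = (1-t)f^* + tf\in\mathcal{F}$: feasibility of $(f_t,g^*)$ gives $\omega(\|K(g^*-f_t)\|) \ge \omega(\delta_0) - t(Lf-Lf^*)$, and the same Taylor expansion produces the claimed lower bound (with the sign reversal coming from the fact that $f$ is perturbed on the lower side of the inner product).

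\paragraph{Bias claim and main obstacle.} For the maximum bias claim, I use the definition \eqref{eq:Ldelta_eq}. The bias of $\hat L_{\delta,\mathcal{F},\mathcal{G}}$ at a generic $f\in\mathcal{F}$ is
$E_f \hat L_{\delta,\mathcal{F},\mathcal{G}} - Lf = Lf^*_{M,\delta} - Lf + \frac{\omega'(\delta)}{\delta}\langle K(g^*_\delta - f^*_\delta), K(f-f^*_{M,\delta})\rangle$.
Subtracting the bias evaluated at $f = f^*_\delta$ yields exactly
$\frac{\omega'(\delta)}{\delta}\langle K(g^*_\delta-f^*_\delta), K(f-f^*_\delta)\rangle - (Lf-Lf^*_\delta)$,
which is nonpositive by the second inequality in \eqref{between_class_bias_ineq} applied with $d = \omega'(\delta)$ and $\delta_0 = \delta$. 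The analogous comparison for $g\in\mathcal{G}$ at $g^*_\delta$ gives the minimum bias statement. The main subtlety, and essentially the only nonroutine point, is to verify that the one-sided directional derivative of $t\mapsto \|K(g_t-f^*)\|$ can be passed to the limit inside both the feasibility inequality and the supergradient inequality; this is legitimate because $\delta_0>0$ by hypothesis, so the square-root expansion is smooth at $t=0$, and the supergradient inequality holds globally in its argument. Once this step is secured, the rest is algebra.
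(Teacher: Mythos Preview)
Your proposal is correct and follows essentially the same approach as the paper: perturb $g^*$ (resp.\ $f^*$) along the segment toward $g$ (resp.\ $f$), combine the feasibility lower bound on $\omega$ with the supergradient upper bound, and pass to the limit via the derivative of $t\mapsto\|K(g_t-f^*)\|$ at $t=0$. The paper phrases the first step as a proof by contradiction (and treats the zero-inner-product case separately), whereas your direct argument handles both cases at once; the bias computation is identical.
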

\begin{proof}
  Denote the ordered modulus $\omega(\delta;\mathcal{F},\mathcal{G})$ by
  $\omega(\delta)$. Suppose that the first inequality
  in~\eqref{between_class_bias_ineq} does not hold for some $g$. Then, for some
  $\varepsilon>0$,
  \begin{equation}\label{contradiction_ineq}
    Lg-Lg^*> (d+\varepsilon) \frac{\langle
    K(g^*-f^*),K(g-g^*)\rangle}{\|K(g^*-f^*)\|}.
  \end{equation}
  Let $g_\lambda=(1-\lambda) g^*+\lambda g$. Since
  $g_\lambda-g^*=\lambda (g-g^*)$, we have
  $\lambda
  L(g-g^*)=Lg_\lambda-Lf^*-L(g^*-f^*)=Lg_\lambda-Lf^*-\omega(\delta_0)$.
  Furthermore, since $g_\lambda\in\mathcal{G}$ by convexity,
  $Lg_\lambda-Lf^*\le \omega(\|K(g_\lambda-f^*)\|)$ so
  multiplying~\eqref{contradiction_ineq} by $\lambda$ gives
  \begin{equation}\label{omega_Klambdagf_bound_eq}
    \omega(\|K(g_\lambda-f^*)\|)-\omega(\delta_0)
    \ge \lambda L(g-g^*)
    >\lambda(d+\varepsilon) \frac{\langle
      K(g^*-f^*),K(g-g^*)\rangle}{\|K(g^*-f^*)\|}.
  \end{equation}
  Note that
  \begin{equation}\label{lambda_derivative_eq}
    \frac{d}{d\lambda}\|K(g_\lambda-f^*)\|\bigg|_{\lambda=0}
    =\frac{1}{2}\frac{\frac{d}{d\lambda}\|K(g_\lambda-f^*)\|^2\bigg|_{\lambda=0}}
    {\|K(g^*-f^*)\|} =\frac{\langle K(g^*-f^*), K(g-g^*)\rangle}{\|K(g^*-f^*)\|}
  \end{equation}
  so that
  $\|K(g_\lambda-f^*)\|=\delta_0+\lambda \frac{\langle K(g^*-f^*),
    K(g-g^*)\rangle}{\|K(g^*-f^*)\|}+o(\lambda)$. Combining this
  with~\eqref{omega_Klambdagf_bound_eq}, we have
  \begin{equation*}
    \omega\left(\delta_0+\lambda \frac{\langle K(g^*-f^*),
    K(g-g^*)\rangle}{\|K(g^*-f^*)\|}+o(\lambda)\right)
    -\omega(\delta_0)
    >\lambda(d+\varepsilon) \frac{\langle
    K(g^*-f^*),K(g-g^*)\rangle}{\|K(g^*-f^*)\|},
  \end{equation*}
  which is a contradiction unless $\langle K(g^*-f^*),K(g-g^*)\rangle=0$.

  If $\langle K(g^*-f^*),K(g-g^*)\rangle=0$, then~\eqref{contradiction_ineq}
  gives $Lg-Lg^*>0$, which, by the first inequality
  in~\eqref{omega_Klambdagf_bound_eq} implies
  $\omega(\|K(g_\lambda-f^*)\|)-\omega(\delta_0)\ge \lambda c$ where
  $c=Lg-Lg^*>0$. But in this case~\eqref{lambda_derivative_eq} implies
  $\|K(g_\lambda-f^*)\|=\delta_0+o(\lambda)$, again giving a contradiction. This
  proves the first inequality, and a symmetric argument applies to the
  inequality involving $Lf-Lf^*$, thereby giving the first result.

  Now consider the test statistic $\hat L_{\delta,\mathcal{F},\mathcal{G}}$.
  Under $g\in\mathcal{G}$, the bias of this statistic is equal to a constant
  that does not depend on $g$ plus
  \begin{equation*}
    d\frac{\langle K(g^*-f^*),K(g-g^*)\rangle} {\|K(g^*-f^*)\|}-(Lg-Lg^*).
  \end{equation*}
  It follows from~\eqref{between_class_bias_ineq} that this is minimized over
  $g\in\mathcal{G}$ by taking $g=g^*$. Similarly, the maximum bias over
  $\mathcal{F}$ is taken at $f^*$.
\end{proof}

The next lemma is used in the proof of
\Cref{th:two_sided_adaptation_to_function_thm}.

\begin{lemma}\label{convex_testing_lemma}
  Let $\tilde{\mathcal{F}}$ and $\tilde{\mathcal{G}}$ be convex sets, and
  suppose that $f^*$ and $g^*$ minimize $\|K(f-g)\|$ over
  $f\in\tilde{\mathcal{F}}$ and $g\in\tilde{\mathcal{G}}$. Then, for any level
  $\alpha$, the minimax test of $H_0:\tilde{\mathcal{F}}$ vs
  $H_1:\tilde{\mathcal{G}}$ is given by the Neyman-Pearson test of $f^*$ vs
  $g^*$. It rejects when $\langle K(f^*-g^*),Y\rangle$ is greater than its
  $1-\alpha$ quantile under $f^*$. The minimum power of this test over
  $\tilde{\mathcal{G}}$ is taken at $g^*$.
\end{lemma}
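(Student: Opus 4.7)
The plan is to reduce the composite testing problem to a simple-vs-simple Neyman--Pearson problem at the minimum-distance pair $(f^*, g^*)$, using first-order optimality conditions induced by convexity of $\tilde{\mathcal{F}}$ and $\tilde{\mathcal{G}}$.

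First I would derive the two ``projection'' inequalities that characterize $(f^*, g^*)$. For any $f \in \tilde{\mathcal{F}}$ and $\lambda \in [0,1]$, the convex combination $f_\lambda = (1-\lambda)f^* + \lambda f$ lies in $\tilde{\mathcal{F}}$, so minimality of $\|K(f - g^*)\|^2$ at $f = f^*$ gives
\[
\tfrac{d}{d\lambda}\|K(f_\lambda - g^*)\|^2\big|_{\lambda=0} = 2\langle K(f^* - g^*), K(f - f^*)\rangle \ge 0,
\]
i.e.\ $\langle K(g^* - f^*), K(f - f^*)\rangle \le 0$ for all $f \in \tilde{\mathcal{F}}$. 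A symmetric perturbation of $g$ inside $\tilde{\mathcal{G}}$ yields $\langle K(g^* - f^*), K(g - g^*)\rangle \ge 0$ for all $g \in \tilde{\mathcal{G}}$. These two inequalities are the engine of the rest of the proof.

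Next I would verify size and the location of minimum power for the Neyman--Pearson test of $f^*$ vs.\ $g^*$. In the Gaussian model, the log-likelihood ratio between $g^*$ and $f^*$ is affine in $T(Y) := \langle K(g^* - f^*), Y\rangle$, so the Neyman--Pearson test rejects for large $T(Y)$ (equivalently, for small $\langle K(f^* - g^*), Y\rangle$). Under any $h$, $T(Y) \sim \mathcal{N}(\langle K(g^* - f^*), Kh\rangle, \sigma^2\|K(g^* - f^*)\|^2)$, so only the mean varies with $h$ and the variance is constant. Writing the mean under $f \in \tilde{\mathcal{F}}$ as $\langle K(g^* - f^*), Kf^*\rangle + \langle K(g^* - f^*), K(f - f^*)\rangle$ and applying the first projection inequality shows the mean is maximized over $\tilde{\mathcal{F}}$ at $f^*$; hence the rejection probability is maximized at $f^*$, and the critical value defined as the $1-\alpha$ quantile under $f^*$ delivers level $\alpha$ over the whole composite null. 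The symmetric calculation using the second projection inequality shows the mean is minimized over $\tilde{\mathcal{G}}$ at $g^*$, so minimum power is attained at $g^*$.

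Finally, for minimax optimality, I would observe that any level-$\alpha$ test $\phi$ of $\tilde{\mathcal{F}}$ vs.\ $\tilde{\mathcal{G}}$ satisfies $E_{f^*}[\phi] \le \alpha$ and is therefore a level-$\alpha$ test of the simple null $\{f^*\}$ against the simple alternative $\{g^*\}$. The Neyman--Pearson lemma then yields $E_{g^*}[\phi] \le E_{g^*}[\phi^*]$, where $\phi^*$ is the constructed test. Combining with the previous step,
\[
\inf_{g \in \tilde{\mathcal{G}}} E_g[\phi] \le E_{g^*}[\phi] \le E_{g^*}[\phi^*] = \inf_{g \in \tilde{\mathcal{G}}} E_g[\phi^*],
\]
so $\phi^*$ is minimax. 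The only genuinely subtle step is the first-order condition: once one realizes that the joint objective $\|K(f-g)\|^2$ is separately convex in each argument, one can perturb $f$ and $g$ independently inside their respective convex sets, and the remainder is bookkeeping in a Gaussian likelihood.
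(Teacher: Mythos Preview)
Your argument is correct and complete: the first-order optimality conditions from convexity give the two projection inequalities, these pin down where the mean of the linear statistic is extremized over each set, and the Neyman--Pearson lemma applied at $(f^*,g^*)$ closes the minimax claim.

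The paper takes a different, much shorter route: it simply observes that the image sets $\{Kf: f\in\tilde{\mathcal F}\}$ and $\{Kg: g\in\tilde{\mathcal G}\}$ are convex subsets of the Hilbert space $\mathcal Y$ and invokes the general minimax testing result for convex hypotheses from \citet[Section 2.4.3]{InSu03}. Your approach is self-contained and makes transparent exactly which geometric fact (the projection inequalities) drives both size control and the location of minimum power; the paper's approach is terser and defers those details to the cited monograph. Substantively the two are the same argument---Ingster and Suslina's proof proceeds via the same least-favorable-pair reduction you carry out by hand---so your version can be read as an unpacking of the citation.
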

\begin{proof}
  The result is immediate from results stated in Section 2.4.3 in
  \citet{InSu03}, since the sets $\{Kf\colon f\in\tilde{\mathcal{F}}\}$ and
  $\{Kg\colon g\in\tilde{\mathcal{G}}\}$ are convex.
\end{proof}

\subsection{Proof of Theorem~\ref{th:one_side_adapt_thm}}%
\label{sec:ap:one_side_adapt_proof_sec}

For ease of notation in this proof, let $f^*=f^*_\delta$ and $g^*=g^*_\delta$
denote the functions that solve the modulus problem with
$\|K(f^*-g^*)\|=\delta$, and let
$d=\omega'(\delta;\mathcal{F},\mathcal{G})\in\partial
\omega(\delta;\mathcal{F},\mathcal{G})$ so that, plugging the worst-case bias
formula~\eqref{eq:maxbias-minbias} into the definition of $\hat c_\alpha$, we
have
\begin{equation*}\label{eq:hat-c-adapt-thm}
  \hat c_\alpha =\hat c_{\alpha,\delta,\mathcal{F},\mathcal{G}} =Lf^*
  +d\frac{\langle K(g^*-f^*), Y\rangle} {\|K(g^*-f^*)\|} -d\frac{\langle
  K(g^*-f^*),K f^*\rangle} {\|K(g^*-f^*)\|} -z_{1-\alpha}\sigma d.
\end{equation*}
Note that $\hat c_\alpha=\hat L_{\delta,\mathcal{F},\mathcal{G}}+a$ for $a$
chosen so that the $1-\alpha$ quantile of $\hat c_\alpha-Lf^*$ under $f^*$ is
zero. Thus, it follows from Lemma~\ref{th:max_bias_lemma} that
$\hor{\hat{c}_\alpha,\infty}$ is a valid $1-\alpha$ CI for $Lf$ over
$\mathcal{F}$, and that all quantiles of excess coverage $Lg-\hat{c}_\alpha$ are
maximized over $\mathcal{G}$ at $g^*$. In particular,
$\Rlower{\beta}(\hat{c}_\alpha;\mathcal{G})=q_{g^*,\beta}(Lg^*-\hat c_\alpha)$.
To calculate this quantile, note that, under $g^*$, $Lg^*-\hat c_\alpha$ is
normal with variance $d^2\sigma^2$ and mean
\begin{equation*}
  Lg^*-Lf^* -d\frac{\langle K(g^*-f^*),K (g^*-f^{*})\rangle} {\|K(g^*-f^*)\|}
  +z_{1-\alpha}\sigma
  d =\omega(\delta;\mathcal{F},\mathcal{G})+d(z_{1-\alpha}\sigma-\delta).
\end{equation*}
The probability that this normal variable is less than or equal to
$\omega(\delta;\mathcal{F},\mathcal{G})$ is given by the probability that a
normal variable with mean $d (z_{1-\alpha}\sigma-\delta)$ and variance
$d^2\sigma^2$ is less than or equal to zero, which is
$\Phi(\delta/\sigma-z_{1-\alpha})=\beta$. Thus,
$\Rlower{\beta}(\hat{c}_\alpha;\mathcal{G})=\omega(\delta;\mathcal{F},\mathcal{G})$
as claimed.

It remains to show that no other $1-\alpha$ CI can strictly improve on this.
Suppose that some other $1-\alpha$ CI $\hor{\tilde c,\infty}$ obtained
$\Rlower{\beta}(\tilde{c};\mathcal{G})<
\Rlower{\beta}(\hat{c}_\alpha;\mathcal{G})=\omega(\delta;\mathcal{F},\mathcal{G})$.
Then the $\beta$ quantile of excess length at $g^*$ would be strictly less than
$\omega(\delta;\mathcal{F},\mathcal{G})$, so that, for some $\eta>0$,
\begin{equation*}
  P_{g^*}(Lg^*-\tilde c\le \omega(\delta;\mathcal{F},\mathcal{G})-\eta)\ge
  \beta.
\end{equation*}
Let $\tilde f$ be given by a convex combination between $g^*$ and $f^*$ such
that $Lg^*-L\tilde f=\omega(\delta;\mathcal{F};\mathcal{G})-\eta/2$. Then the
above display gives
\begin{equation*}
  P_{g^*}(\tilde c> L\tilde f) \ge P_{g^*}(\tilde c\ge L\tilde f+\eta/2)
  =P_{g^*}(Lg^*-\tilde c\le Lg^*-L\tilde f-\eta/2) \ge \beta.
\end{equation*}
But this would imply that the test that rejects when $\tilde c>L\tilde f$ is
level $\alpha$ for $H_0:\tilde f$ and has power $\beta$ at $g^*$. This can be
seen to be impossible by calculating the power of the Neyman-Pearson test of
$\tilde f$ vs $g^*$, since $\beta$ is the power of the Neyman-Pearson test of
$f^*$ vs $g^*$, and $\tilde f$ is a strict convex combination of these
functions.

\subsection{Proof of
  Corollary~\ref{th:centrosymmetric_adaptation_corollary}}\label{centrosymmetric_adaptation_corollary_proof_sec}

Under~\eqref{eq:f-g-in-F}, if $f_{\delta,\mathcal{F},\mathcal{G}}^{*}$ and
$g^{*}_{\delta,\mathcal{F},\mathcal{G}}$ solve the modulus problem
$\omega(\delta,\mathcal{F},\mathcal{G})$, then
$f_{\delta,\mathcal{F},\mathcal{G}}^{*}-g^{*}_{\delta,\mathcal{F},\mathcal{G}}$
and $0$ (the zero function) solve $\omega(\delta;\mathcal{F},\left\{0\right\})$.
Thus,
$\omega(\delta;\mathcal{F},\mathcal{G})=\omega(\delta;\mathcal{F},\left\{0\right\})$,
and the estimators $\hat L_{\delta,\mathcal{F},\mathcal{G}}$ and
$\hat L_{\delta,\mathcal{F},\{0\}}$ and the corresponding CIs are equal up to
the choice of the element in the superdifferential. It therefore suffices to
prove the result for $\mathcal{G}=\{0\}$.

We have
\begin{equation*}
  \omega(\delta;\mathcal{F},\left\{0\right\})=
  \sup\left\{-Lf\colon
    \norm{Kf}\leq\delta,f\in\mathcal{F} \right\}=
  \frac{1}{2}\omega(2\delta;\mathcal{F}),
\end{equation*}
where the last equality obtains because under centrosymmetry, maximizing
$-Lf=L(-f)$ and maximizing $Lf$ are equivalent, so that the maximization problem
is equivalent to~\eqref{eq:centrosymmetric_modulus_eq}. Furthermore, we can take
$g^{*}_{2\delta,\mathcal{F}},f^{*}_{2\delta,\mathcal{F}}$ to satisfy
$g^{*}_{2\delta,\mathcal{F}}=-f^{*}_{2\delta,\mathcal{F}}$ with
$f^{*}_{2\delta,\mathcal{F}}$ solving the above optimization problem, so that
$g^{*}_{\delta,\mathcal{F},\{0\}}-f^{*}_{\delta,\mathcal{F},\{0\}}=-f^{*}_{\delta,\mathcal{F},\{0\}}=-f^{*}_{2\delta,\mathcal{F}}=\frac{1}{2}(g^{*}_{2\delta,\mathcal{F}}-f^{*}_{2\delta,\mathcal{F}})$.
Thus, $\hat{L}_{\delta,\mathcal{F},\{0\}}$ and $\hat{L}_{2\delta,\mathcal{F}}$
are equal up to a constant, which implies
$\hat{c}_{\alpha,\delta,\mathcal{F},\left\{0\right\}}=\hat{c}_{\alpha,2\delta,\mathcal{F}}$.
This proves the first part of the corollary. The second part of the corollary
follows by plugging $\minbias_{\{0\}}(\hat L_{\delta_\beta,\mathcal{F}})=0$ and
the formulas for $\maxbias_{\mathcal{F}}(\hat L_{\delta_\beta,\mathcal{F}})$ and
$\sd(\hat L_{\delta_\beta,\mathcal{F}})$ given in \Cref{affine_estimators_sec}
into the expression~\eqref{affine_oci_qbeta} to obtain
$\Rlower{\beta}(\hat{c}_{\alpha,\delta_\beta,\mathcal{F}},\{0\})=(\omega(\delta_{\beta};\mathcal{F})+\delta_{\beta}\omega'(\delta_{\beta};\mathcal{F}))/2$.

\subsection{Proof of \Cref{th:two_sided_adaptation_to_function_thm}}%
\label{sec:ap:two_sided_adaptation_to_function_proof_sec}
Following \citet{pratt61}, note that, for any confidence set $\mathcal{C}$ for
$\vartheta=Lf$, we have
\begin{equation*}
  E_g\lambda(\mathcal{C}) =E_g\int (1-\phi_{\mathcal{C}}(\vartheta))\, d\vartheta
  =\int E_g(1-\phi_{\mathcal{C}}(\vartheta))\, d\vartheta
\end{equation*}
by Fubini's theorem, where
$\phi_{\mathcal{C}}(\vartheta)=\1{\vartheta\notin\mathcal{C}}$. Thus, the CI
that minimizes this inverts the family of most powerful tests of
$H_0\colon Lf=\vartheta,f\in\mathcal{F}$ against $H_{1}\colon f=g$. By
\Cref{convex_testing_lemma} since the sets
$\{f\colon Lf=\vartheta,f\in\mathcal{F}\}$ and $\{g\}$ are convex, the least
favorable function $f_{\vartheta}$ minimize $\norm{K(g-f)}$ subject to
$Lf=\vartheta$, which gives the first part of the theorem.

To derive the expression for expected length, note that if $Lg\leq \vartheta$,
then the minimization problem is equivalent to solving the inverse ordered
modulus problem $\omega^{-1}(\vartheta-Lg;\left\{g\right\},\mathcal{F})$, and if
$Lg\geq \vartheta$, it is equivalent to solving
$\omega^{-1}(Lg-\vartheta;\mathcal{F},\left\{g\right\})$. This follows because
if the ordered modulus $\omega(\delta;\mathcal{F},\left\{g\right\})$ is attained
at some $f^{*}_{\delta}$ and $g$, then the inequality $\norm{K(f-g)}\leq \delta$
must be binding: otherwise a convex combination of $\tilde{f}$ and
$f^{*}_{\delta}$, where $\tilde{f}$ is such that
$L(g-f^{*}_{\delta})<L(g-\tilde{f})$ would achieve a strictly larger value, and
similarly for $\omega(\delta;\left\{g\right\},\mathcal{F})$. Such $\tilde{f}$
always exists since by the assumption that $f_{\vartheta}$ exists for all
$\vartheta$. The above argument assumes that
$\vartheta-Lg\ge \omega(0;\{g\},\mathcal{F})$ so that $\vartheta-Lg$ is in the
range of the modulus; if $0\le \vartheta-Lg\le \omega(0;\{g\},\mathcal{F})$,
then $\|K(f_\vartheta-g)\|=0$ so the minimization problem is still equivalent to
the inverse modulus if we define the inverse to be $0$ in this case (and
similarly for $0\le Lg-\vartheta\le \omega(0;\mathcal{F},\{g\})$).

Next, it follows from the proof of \Cref{th:one_side_adapt_thm} that the power
of the test $\phi_{\vartheta}$ at $g$ is given by
$\Phi(\delta_{\vartheta}/\sigma-z_{1-\alpha})$, where
$\delta_\vartheta=\|f_\vartheta-g\|$. Therefore,
\begin{equation*}
    E_{g}[\lambda(\mathcal{C}_{g}(Y))]=
    \int \Phi\left(z_{1-\alpha}-\frac{\delta_{\vartheta}}{\sigma}\right)\,\dd \vartheta=\iint \1{\delta_{\vartheta}\leq \sigma(z_{1-\alpha}-z)} \,\dd
    \vartheta\,\dd \Phi(z),
\end{equation*}
where the second equality swaps the order of integration. Splitting the inner
integral, using fact that
$\delta_{\vartheta}=\omega^{-1}(Lg-\vartheta;\mathcal{F},\left\{g\right\})$ for
$\vartheta\leq Lg$ and
$\delta_{\vartheta}=\omega^{-1}(\vartheta-Lg;\left\{g\right\},\mathcal{F})$ for
$\vartheta\geq Lg$, and taking a modulus on both sides of the inequality of the
integrand then yields
\begin{equation*}
  \begin{split}
    E_{g}[\lambda(\mathcal{C}_{g}(Y))] &=\iint_{\vartheta\leq Lg}
    \1{Lg-\vartheta\leq
      \omega\left(\sigma(z_{1-\alpha}-z);\mathcal{F},\left\{g\right\}\right)}
    \1{z\leq z_{1-\alpha}} \,\dd \vartheta
    \,\dd \Phi(z)\\
    &\qquad + \iint_{\vartheta>Lg} \1{\vartheta-Lg\leq
      \omega\left(\sigma(z_{1-\alpha}-z);\left\{g\right\},\mathcal{F}\right)}
    \1{z\leq z_{1-\alpha}} \,\dd
    \vartheta\,\dd \Phi(z)\\
    &=
    (1-\alpha)E\left[(\omega(\sigma(z_{1-\alpha}-Z);\mathcal{F},\left\{g\right\})+
      \omega(\sigma(z_{1-\alpha}-Z);\left\{g\right\},\mathcal{F}))\mid Z\leq
      z_{1-\alpha} \right],
  \end{split}
\end{equation*}
where $Z$ is standard normal, which yields the result.

\section{Extension to RD with covariates}\label{covariate_section_append}

This section discusses extensions to the RD setup when we have available a set
of covariates $z_i$ that are independent of the treatment. If the object of
interest is still the average treatment effect at $x=0$, then ignoring the
additional covariates will still lead to a valid CI\@. However, one may want to
use the information that $z_i$ is independent of treatment to gain precision. We
discuss this in \Cref{unconditional_te_section}. Alternatively, one may want to
estimate the treatment effect at $x=0$ conditional on different values of $z$,
which leads to a different approach, discussed in \Cref{conditional_te_section}.

\subsection{Using covariates to improve precision}\label{unconditional_te_section}

As argued by \citet{ccft16}, if $z_i$ is independent of treatment, the conditional mean of $z_i$ given the running variable $x_i$ should be smooth near the cutoff.  We can fit this into our setup using the model
\begin{align*}
\begin{array}{c}
  y_i=h_y(x_i)+u_i,  \\
  z_i=h_z(x_i)+v_i,
\end{array}
\quad
  \begin{pmatrix}
    u_i \\ v_i
      \end{pmatrix}
  \sim\mathcal{N}\left(0,\Sigma(x_i)\right),\,
  h_y\in\mathcal{H}_y, \, h_z\in\mathcal{H}_z,
\end{align*}
where $\mathcal{H}_y$ and $\mathcal{H}_z$ are convex smoothness classes, and we
treat $\Sigma(\cdot)$ as known. We incorporate the constraint that $z_i$ is
independent of treatment by choosing a class $\mathcal{H}_z$ such that
$\lim_{x\downarrow 0}h_z(x)-\lim_{x\uparrow 0}h_z(x)=0$ for all
$h_z\in\mathcal{H}_z$. For example, we can take
$\mathcal{H}_y=\mathcal{F}_{RDT,p}(C_y)$ and
$\mathcal{H}_z=\mathcal{F}_{RDT,p}(C_z)\cap \{h\colon \lim_{x\downarrow
  0}h_z(x)-\lim_{x\uparrow 0}h_z(x)=0\}$ for some constants $C_y$ and $C_z$.

Using our general results, one can compute optimal CIs and bounds for adaptation.  For example, our adaptation bounds show that, when $\mathcal{H}_y$ and $\mathcal{H}_z$ are centrosymmetric, there are severe limitations to adapting to the smoothness constant for either class.  Thus, CIs that take into account the covariates $z_i$ will have to depend explicitly on the smoothness constant that $h_z$ is assumed to satisfy.

In the remainder of this section, we consider a particular smoothness class, and we construct CIs that are optimal or near-optimal when $\Sigma(x)$ is constant as well as feasible versions of these CIs that are valid when $\Sigma(x)$ is unknown and may not be constant.  Given $\Sigma$, let $\Sigma_{22}$ denote the bottom-right $d_z\times d_z$ submatrix of $\Sigma$ and let $\Sigma_{21}$ denote the bottom-left $d_z\times d_1$ submatrix of $\Sigma$, where $d_z$ is the dimension of $z_i$.  Let
$\tilde y_i=y_i-z_i'\Sigma_{22}^{-1}\Sigma_{21}$ so that
\begin{align*}
  \tilde y_i=
  h_y(x_i)-h_z(y_i)'\Sigma_{22}^{-1}\Sigma_{21}+u_i-v_i'\Sigma_{22}^{-1}\Sigma_{21}
  =\tilde h_y(x_i)+\tilde u_i
\end{align*}
where $\tilde h_y(x_i)=h_y(x_i)-h_z(y_i)'\Sigma_{22}^{-1}\Sigma_{21}$ and
$\tilde u_i=u_i-v_i'\Sigma_{22}^{-1}\Sigma_{21}$. Note also that
$\lim_{x\downarrow 0}\tilde h_y(x)-\lim_{x\uparrow
  0}\tilde{h}_y(x)=\lim_{x\downarrow 0} h_y(x)-\lim_{x\uparrow 0} h_y(x)$, so
that the RD parameter for $\tilde h_y$ is the same as the RD parameter for
$h_y$. Suppose that we model the smoothness of $\tilde h_y$ directly, and take
the parameter space for $(\tilde h_y,h_z)$ to be
$\mathcal{F}_{RDT,p}(\tilde C)\times \mathcal{H}_z$. Since $\tilde u_i$ is
independent of $v_i$ and the RD parameter depends only on $\tilde h_y$, it can
be seen that minimax optimal estimators and CIs can be formed by ignoring the
$z_i$'s after this transformation is made. Thus, one can proceed as in
\Cref{sec:pract-impl} with $\tilde y_i$ in place of $y_i$.\footnote{If one
  places smoothness assumptions on $h_y$ rather than $\tilde h_y$ by taking
  $\mathcal{H}_y=\mathcal{F}_{RDT,p}(C_y)$ and
  $\mathcal{H}_z=\mathcal{F}_{RDT,p}(C_z)\cap \{h\colon \lim_{x\downarrow
    0}h_z(x)-\lim_{x\uparrow 0}h_z(x)=0\}$, then
  $\tilde h_y\in \mathcal{F}_{RDT,p}(C_y+C_z\iota'\Sigma_{22}^{-1}\Sigma_{21})$
  where $\iota$ is a vector of ones. It follows that the CIs discussed here will
  be valid for $\tilde C\ge C_y+C_z\iota'\Sigma_{22}^{-1}\Sigma_{21}$. However,
  the resulting parameter space for $(\tilde h_y,h_z)$ will be different (in
  particular, it will not take the form $\mathcal{H}_y\times \mathcal{H}_z$), so
  that optimal estimators will be different for this class.}

To make this procedure feasible, we need an estimate of
$\Sigma_{22}^{-1}\Sigma_{21}$. We propose the estimates
$\hat \Sigma_{22}=\frac{1}{nh}\sum_{i=1}^n \hat v_i\hat v_i' k(x_i/h)$ and
$\hat \Sigma_{21}=\frac{1}{nh}\sum_{i=1}^n \hat v_i y_i k(x_i/h)$ where
$\hat v_i$ is the residual from the local polynomial regression of $z_i$ on a
$p$th order polynomial of $x_i$ and its interaction with $\1{x_i>0}$, with
weight $k(x_i/h)$. To form CIs, one proceeds as in \Cref{sec:pract-impl} with
$\tilde y_i=y_i-z_i'\hat \Sigma_{22}^{-1}\hat \Sigma_{21}$ in place of $y_i$ and
$\tilde C$ playing the role of $C$. A simple calculation shows that, if one uses
the local polynomial weights~\eqref{eq:lp-weights}, with the same kernel and
bandwidth used to estimate $\Sigma$, the resulting CIs will be centered at a
local polynomial estimate where $z_i$ is included as a regressor in the local
polynomial regression. This corresponds exactly to an estimator proposed by
\citet{ccft16}. Thus, our relative efficiency results can be used to show that
this estimator is close to optimal under these assumptions.

\subsection{Estimating the treatment effect conditional on $z_i=z$}\label{conditional_te_section}

If one is interested in how the treatment effect at $x=0$ varies with $z$, one
can use the model $y_i=f(x_i,z_i)+u_i$ where $f$ is placed in a smoothness class
and the object of interest is
$L_{z}f =\lim_{x\downarrow 0}f(x,z)-\lim_{x\uparrow 0}f(x,z)$ for different
values of $z$. This fits into our general framework once one fixes the point $z$
at which $L_{z}f$ is evaluated, and one can use our results to obtain CIs for
different values of $z$. A natural smoothness class is to place a bound on the
$p$th order multivariate Taylor approximation of $f(x,z)\1{x>0}$ and
$f(x,z)\1{x<0}$ at $x=0$ and $z$ equal to the value of interest. The analysis of
optimal and near optimal estimators then follows from a generalization of the
results described in \Cref{sec:pract-impl}. In particular, one can use
multivariate local polynomial estimators (with worst-case bias computed using a
generalization of the calculations in \Cref{rd_bias_sec}), or optimal weights
can be computed by generalizing the calculations in
\Cref{rd_modulus_solution_sec}.

Estimating the treatment effect conditional on different values of $z$ can be a
useful way of exploring treatment effect heterogeneity. However, unless one
places some additional parametric structure on $f(x,z)$, the resulting estimates
will suffer from imprecision when the dimension of $z$ is moderate due to the
curse of dimensionality.

\end{appendices}

\newpage
\bibliography{np-testing-library}

\begin{table}[p]
  \centering
  \begin{tabular}{@{}llll@{}}
    & \multicolumn{3}{c}{$\alpha$} \\
    \cmidrule(rl){2-4}
    $b$ &    0.01&    0.05&     0.1\\
    \midrule
    0.0& 2.576& 1.960& 1.645\\
    0.1& 2.589& 1.970& 1.653\\
    0.2& 2.626& 1.999& 1.677\\
    0.3& 2.683& 2.045& 1.717\\
    0.4& 2.757& 2.107& 1.772\\
    0.5& 2.842& 2.181& 1.839\\
    0.6& 2.934& 2.265& 1.916\\
    0.7& 3.030& 2.356& 2.001\\
    0.8& 3.128& 2.450& 2.093\\
    0.9& 3.227& 2.548& 2.187\\
    1.0& 3.327& 2.646& 2.284\\
    1.5& 3.826& 3.145& 2.782\\
    2.0& 4.326& 3.645& 3.282
  \end{tabular}
  \caption{Critical values $\cv_{\alpha}(b)$ for selected confidence levels and
    values of maximum absolute bias $b$. For $b\ge 2$,
    $\cv_{\alpha}(b)\approx b+z_{1-\alpha}$ up to 3 decimal places for these
    values of $\alpha$.}\label{tab:cv-b}
\end{table}

\clearpage

\begin{figure}[p]
  \centering \input{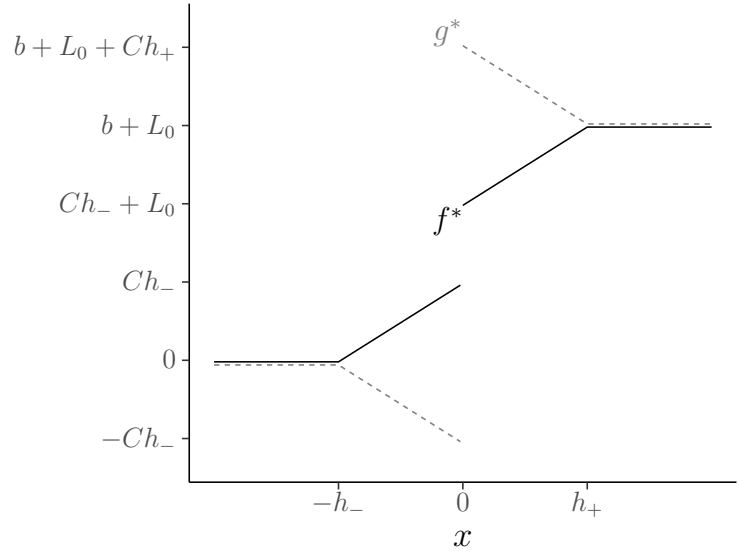}
  \caption{The least favorable null and alternative functions $f^{*}$ and
    $g^{*}$ from Equation~\eqref{eq:fg-star} in \Cref{optimal_cis_subsec}.
  }\label{fig:lipschitz-lf}
\end{figure}

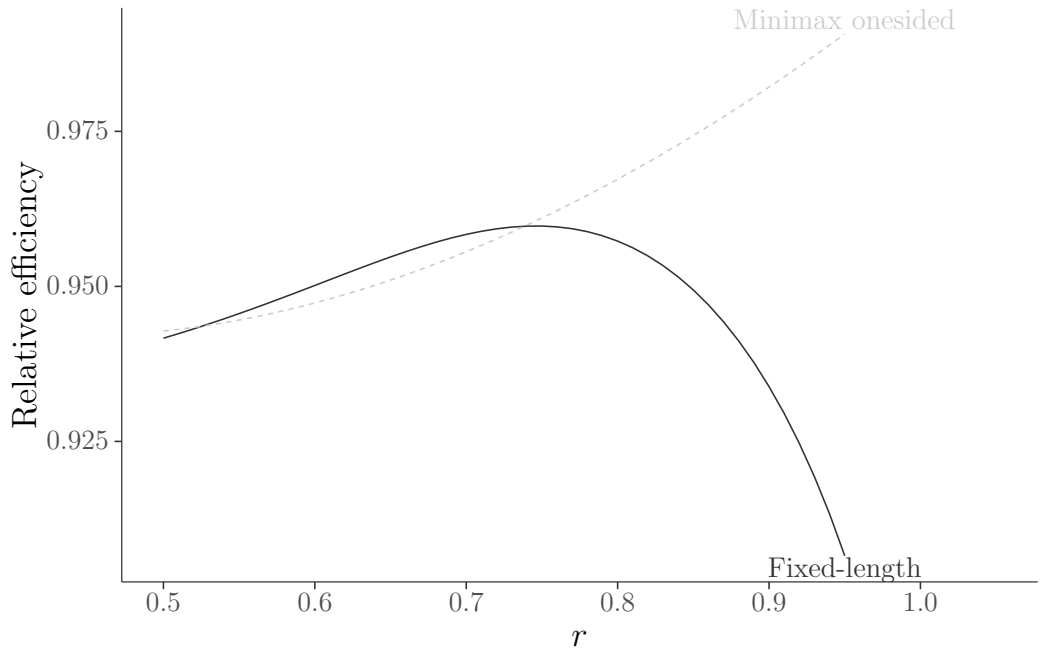
\begin{figure}[p]
  \centering 
\begin{tikzpicture}[x=1pt,y=1pt]
\definecolor{fillColor}{RGB}{255,255,255}
\path[use as bounding box,fill=fillColor,fill opacity=0.00] (0,0) rectangle (397.48,252.94);
\begin{scope}
\path[clip] (  0.00,  0.00) rectangle (397.48,252.94);
\definecolor{drawColor}{RGB}{255,255,255}
\definecolor{fillColor}{RGB}{255,255,255}

\path[draw=drawColor,line width= 0.6pt,line join=round,line cap=round,fill=fillColor] (  0.00, -0.00) rectangle (397.48,252.94);
\end{scope}
\begin{scope}
\path[clip] ( 45.51, 30.14) rectangle (391.98,247.45);
\definecolor{fillColor}{RGB}{255,255,255}

\path[fill=fillColor] ( 45.51, 30.14) rectangle (391.98,247.45);
\definecolor{drawColor}{gray}{0.20}

\path[draw=drawColor,line width= 0.6pt,line join=round] ( 61.26,122.40) --
	( 66.99,124.13) --
	( 72.71,125.93) --
	( 78.44,127.80) --
	( 84.17,129.73) --
	( 89.89,131.72) --
	( 95.62,133.76) --
	(101.35,135.86) --
	(107.07,138.00) --
	(112.80,140.17) --
	(118.53,142.37) --
	(124.25,144.58) --
	(129.98,146.78) --
	(135.71,148.96) --
	(141.43,151.09) --
	(147.16,153.16) --
	(152.89,155.14) --
	(158.62,157.01) --
	(164.34,158.75) --
	(170.07,160.32) --
	(175.80,161.70) --
	(181.52,162.87) --
	(187.25,163.80) --
	(192.98,164.46) --
	(198.70,164.82) --
	(204.43,164.86) --
	(210.16,164.54) --
	(215.88,163.85) --
	(221.61,162.74) --
	(227.34,161.18) --
	(233.06,159.16) --
	(238.79,156.62) --
	(244.52,153.54) --
	(250.25,149.88) --
	(255.97,145.60) --
	(261.70,140.65) --
	(267.43,135.00) --
	(273.15,128.57) --
	(278.88,121.33) --
	(284.61,113.19) --
	(290.33,104.09) --
	(296.06, 93.92) --
	(301.79, 82.59) --
	(307.51, 69.95) --
	(313.24, 55.84) --
	(318.97, 40.02);
\definecolor{drawColor}{gray}{0.80}

\path[draw=drawColor,line width= 0.6pt,dash pattern=on 2pt off 2pt ,line join=round] ( 61.26,125.14) --
	( 66.99,125.77) --
	( 72.71,126.51) --
	( 78.44,127.33) --
	( 84.17,128.26) --
	( 89.89,129.28) --
	( 95.62,130.39) --
	(101.35,131.59) --
	(107.07,132.88) --
	(112.80,134.26) --
	(118.53,135.73) --
	(124.25,137.29) --
	(129.98,138.94) --
	(135.71,140.67) --
	(141.43,142.49) --
	(147.16,144.39) --
	(152.89,146.38) --
	(158.62,148.45) --
	(164.34,150.60) --
	(170.07,152.83) --
	(175.80,155.14) --
	(181.52,157.54) --
	(187.25,160.01) --
	(192.98,162.56) --
	(198.70,165.19) --
	(204.43,167.90) --
	(210.16,170.68) --
	(215.88,173.54) --
	(221.61,176.48) --
	(227.34,179.49) --
	(233.06,182.58) --
	(238.79,185.74) --
	(244.52,188.97) --
	(250.25,192.28) --
	(255.97,195.66) --
	(261.70,199.12) --
	(267.43,202.65) --
	(273.15,206.24) --
	(278.88,209.91) --
	(284.61,213.65) --
	(290.33,217.46) --
	(296.06,221.35) --
	(301.79,225.30) --
	(307.51,229.32) --
	(313.24,233.41) --
	(318.97,237.57);
\definecolor{drawColor}{gray}{0.20}

\node[text=drawColor,anchor=base,inner sep=0pt, outer sep=0pt, scale=  0.90] at (318.97, 31.96) {Fixed-length};
\definecolor{drawColor}{gray}{0.80}

\node[text=drawColor,anchor=base,inner sep=0pt, outer sep=0pt, scale=  0.90] at (318.97,239.43) {Minimax onesided};
\end{scope}
\begin{scope}
\path[clip] (  0.00,  0.00) rectangle (397.48,252.94);
\definecolor{drawColor}{RGB}{0,0,0}

\path[draw=drawColor,line width= 0.2pt,line join=round] ( 45.51, 30.14) --
	( 45.51,247.45);
\end{scope}
\begin{scope}
\path[clip] (  0.00,  0.00) rectangle (397.48,252.94);
\definecolor{drawColor}{gray}{0.30}

\node[text=drawColor,anchor=base east,inner sep=0pt, outer sep=0pt, scale=  0.88] at ( 40.56, 80.30) {0.925};

\node[text=drawColor,anchor=base east,inner sep=0pt, outer sep=0pt, scale=  0.88] at ( 40.56,138.99) {0.950};

\node[text=drawColor,anchor=base east,inner sep=0pt, outer sep=0pt, scale=  0.88] at ( 40.56,197.67) {0.975};
\end{scope}
\begin{scope}
\path[clip] (  0.00,  0.00) rectangle (397.48,252.94);
\definecolor{drawColor}{gray}{0.20}

\path[draw=drawColor,line width= 0.6pt,line join=round] ( 42.76, 83.33) --
	( 45.51, 83.33);

\path[draw=drawColor,line width= 0.6pt,line join=round] ( 42.76,142.02) --
	( 45.51,142.02);

\path[draw=drawColor,line width= 0.6pt,line join=round] ( 42.76,200.70) --
	( 45.51,200.70);
\end{scope}
\begin{scope}
\path[clip] (  0.00,  0.00) rectangle (397.48,252.94);
\definecolor{drawColor}{RGB}{0,0,0}

\path[draw=drawColor,line width= 0.2pt,line join=round] ( 45.51, 30.14) --
	(391.98, 30.14);
\end{scope}
\begin{scope}
\path[clip] (  0.00,  0.00) rectangle (397.48,252.94);
\definecolor{drawColor}{gray}{0.20}

\path[draw=drawColor,line width= 0.6pt,line join=round] ( 61.26, 27.39) --
	( 61.26, 30.14);

\path[draw=drawColor,line width= 0.6pt,line join=round] (118.53, 27.39) --
	(118.53, 30.14);

\path[draw=drawColor,line width= 0.6pt,line join=round] (175.80, 27.39) --
	(175.80, 30.14);

\path[draw=drawColor,line width= 0.6pt,line join=round] (233.06, 27.39) --
	(233.06, 30.14);

\path[draw=drawColor,line width= 0.6pt,line join=round] (290.33, 27.39) --
	(290.33, 30.14);

\path[draw=drawColor,line width= 0.6pt,line join=round] (347.60, 27.39) --
	(347.60, 30.14);
\end{scope}
\begin{scope}
\path[clip] (  0.00,  0.00) rectangle (397.48,252.94);
\definecolor{drawColor}{gray}{0.30}

\node[text=drawColor,anchor=base,inner sep=0pt, outer sep=0pt, scale=  0.88] at ( 61.26, 19.13) {0.5};

\node[text=drawColor,anchor=base,inner sep=0pt, outer sep=0pt, scale=  0.88] at (118.53, 19.13) {0.6};

\node[text=drawColor,anchor=base,inner sep=0pt, outer sep=0pt, scale=  0.88] at (175.80, 19.13) {0.7};

\node[text=drawColor,anchor=base,inner sep=0pt, outer sep=0pt, scale=  0.88] at (233.06, 19.13) {0.8};

\node[text=drawColor,anchor=base,inner sep=0pt, outer sep=0pt, scale=  0.88] at (290.33, 19.13) {0.9};

\node[text=drawColor,anchor=base,inner sep=0pt, outer sep=0pt, scale=  0.88] at (347.60, 19.13) {1.0};
\end{scope}
\begin{scope}
\path[clip] (  0.00,  0.00) rectangle (397.48,252.94);
\definecolor{drawColor}{RGB}{0,0,0}

\node[text=drawColor,anchor=base,inner sep=0pt, outer sep=0pt, scale=  1.10] at (218.75,  6.06) {$r$};
\end{scope}
\begin{scope}
\path[clip] (  0.00,  0.00) rectangle (397.48,252.94);
\definecolor{drawColor}{RGB}{0,0,0}

\node[text=drawColor,rotate= 90.00,anchor=base,inner sep=0pt, outer sep=0pt, scale=  1.10] at ( 13.08,138.79) {Relative efficiency};
\end{scope}
\end{tikzpicture}
  \caption{Asymptotic efficiency bounds for one-sided and fixed-length CIs as
    function of the optimal rate of convergence $r$ under centrosymmetry.
    Minimax one-sided refers to ratio of $\beta$-quantile of excess length of
    CIs that direct power at smooth functions relative to minimax one-sided CIs
    given in~\eqref{eq:minimax-onesidedCI-efficiency}. Shortest fixed-length
    refers the ratio of expected length of CIs that direct power at a given
    smooth function relative to shortest fixed-length affine CIs given
    in~Theorem~\ref{rd_optimal_estimator_thm_main_text}.}\label{fig:asymptotic-adaptivity-bounds}
\end{figure}

\begin{figure}[p]
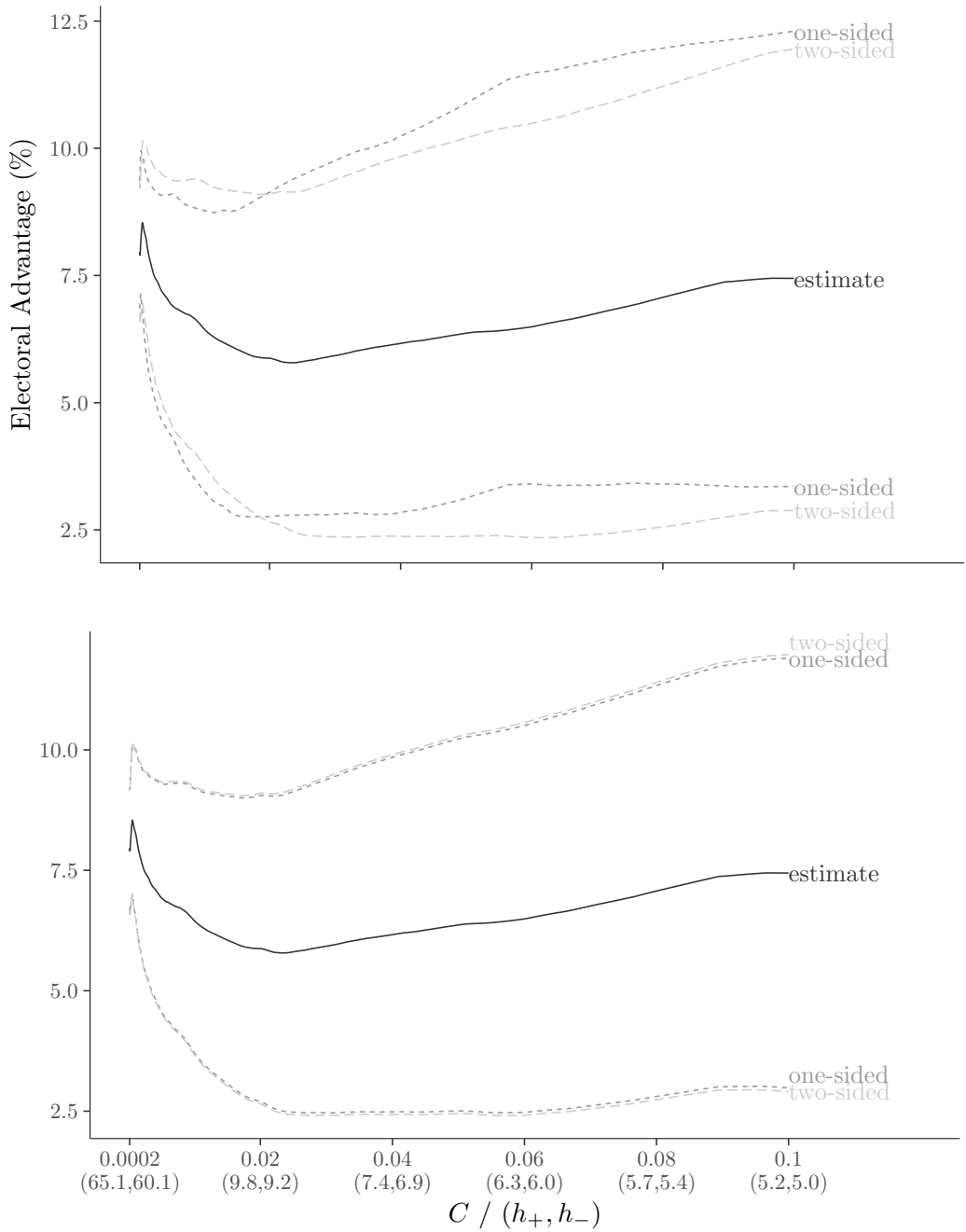

  \centering%
  {\footnotesize%
    \input{lee-opt-bw.tex}
    \input{lee-opt-estimation-bw.tex}}
  \caption{\citet{lee08} RD example. Top panel displays minimax MSE estimator
    (estimator), and lower and upper limits of minimax one-sided confidence
    intervals for 0.8 quantile (one-sided), and fixed-length CIs (two-sided) as
    function of smoothness $C$. Bottom panel displays one-and two-sided CIs
    around the minimax MSE estimator. $h_{+},h_{-}$ correspond to the optimal
    smoothness parameters for the minimax MSE estimator.}\label{fig:lee-minimax}
\end{figure}

\begin{figure}[p]
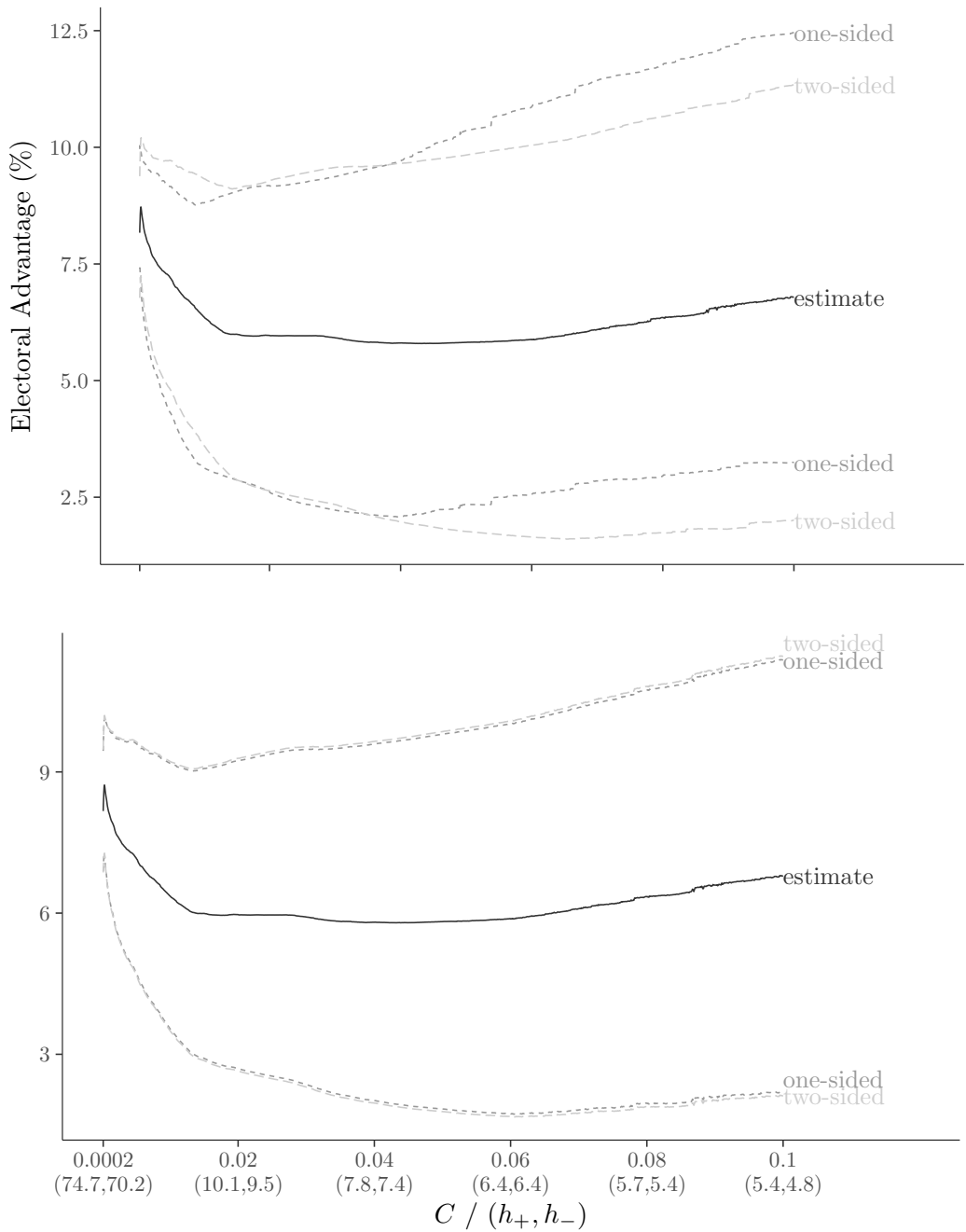

  \centering%
  \footnotesize
  \input{lee-ll-bw.tex}
  \input{lee-ll-estimation-bw.tex}
  \caption{\citet{lee08} RD example: local linear regression with triangular
    kernel. Top panel displays estimator based on minimax MSE bandwidths
    (estimator), lower and upper limits of one-sided CIs with bandwidths that
    are minimax for 0.8 quantile of excess length (one-sided), and shortest
    fixed-length CIs (two-sided) as function of smoothness $C$. Bottom panel
    displays one-and two-sided CIs around and estimator based on minimax MSE
    bandwidths. $h_{+},h_{-}$ correspond to the minimax MSE
    bandwidths.}\label{fig:lee-ll}
\end{figure}

\end{document}


\maketitle

This supplement provides appendices not included in the main text.
\Cref{sec:comp-with-other} compares our approach with other methods, and includes a Monte Carlo study.
\Cref{sec:ap:details} contains details for the results in
\Cref{sec:general_results} not included in the main text.
\Cref{sec:addit-deta-rd} contains details for the RD application.
\Cref{sec:sap:unknown_error} considers feasible versions of the procedures in
\Cref{sec:general_results} in the case with unknown error distribution
and derives their asymptotic efficiency. \Cref{sec:sap:asym_efficiency_bounds}
gives some auxiliary results used for relative asymptotic efficiency
comparisons. \Cref{sec:sap:rd_asym} gives the proof of
Theorem~\ref{rd_optimal_estimator_thm_main_text}.

\begin{appendices}
\crefalias{section}{sappsec}
\crefalias{subsection}{sappsubsec}
\addtocounter{section}{2}

  \section{Comparison with other methods}\label{sec:comp-with-other}

  This section compares the CIs developed in this paper to other approaches to
  inference in the RD application. We consider two popular approaches. The first
  approach is to form a nominal $100\cdot (1-\alpha)\%$ CI by adding and
  subtracting the $1-\alpha/2$ quantile of the $\mathcal{N}(0,1)$ distribution
  times the standard error, thereby ignoring any bias. We refer to these CIs as
  ``conventional.'' The second approach is the robust bias correction (RBC)
  method studied by \citet{cct14}, which subtracts an estimate of the bias, and
  then takes into account the estimation error in this bias correction in
  forming the interval.

  The coverage of these CIs will depend on the smoothness class $\mathcal{F}$ as
  well as the choice of bandwidth. Since CIs reported in applied work are
  typically based on local linear estimators, with relative efficiency results
  for minimax MSE in the class $\mathcal{F}_{T,2}(C,\mathbb{R}_+)$ for
  estimation of $f(0)$ due to \citet{cfm97} often cited as justification, we
  focus on the class $\mathcal{F}_{RDT,2}(C)$ when computing coverage (in
  \Cref{sec:monte-carlo-evidence}, we consider classes that also impose bounds
  on smoothness away from the discontinuity point rather than just placing
  bounds on the error of the Taylor approximation around the discontinuity
  point). If the bandwidth choice is non-random, then finite sample coverage can
  be computed exactly when errors are normal with known variance.\footnote{The
    resulting coverage calculations hold in an asymptotic sense with unknown
    error distribution in the same way that, for example, coverage calculations
    in \citet{StockYogo2005} are valid in an asymptotic sense in the
    instrumental variables setting.} We take this approach in
  \Cref{sec:exact-coverage}. If a data-driven bandwidth is used, computing
  finite sample coverage exactly becomes computationally prohibitive. We examine
  the coverage and relative efficiency of CIs with data driven bandwidths in a
  Monte Carlo study in \Cref{sec:monte-carlo-evidence}.

  \subsection{Exact coverage with nonrandom bandwidth}\label{sec:exact-coverage}

  For a given CI, we examine coverage in the classes $\mathcal{F}_{RDT,2}(C)$ by
  asking ``what is the largest value of $C$ for which this CI has good
  coverage?'' Since the conventional CI ignores bias, there will always be some
  undercoverage, so we formalize this by finding the largest value of $C$ such
  that a nominal $95\%$ CI has true coverage $90\%$. This calculation is easily
  done using the formulas in \Cref{affine_estimators_sec}: the
  conventional approach uses the critical value $z_{0.975}=\cv_{0.05}(0)$ to
  construct a nominal $95\%$ CI, while a valid $90\%$ CI uses
  $\cv_{0.1}(\maxbias_{\mathcal{F}_{RDT,2}(C)}(\hat{L})/\text{se}(\hat{L}))$
  (where $\hat L$ denotes the estimator and $\text{se}(\hat{L})$ denotes its
  standard error), so we equate these two critical values and solve for $C$.

  The resulting value of $C$ for which undercoverage is controlled will depend
  on the bandwidth. To provide a simple numerical comparison to commonly used
  procedures, we consider the (data-dependent) \citet[IK]{ik12restud} bandwidth
  $\hat{h}_{IK}$ in the context of the \citeauthor{lee08} application considered
  in \Cref{sec:appl-regr-disc}, but treat it as if it were fixed a
  priori. The IK bandwidth selector leads to $\hat{h}_{IK}=29.4$ for local
  linear regression with the triangular kernel. The conventional two-sided CI
  based on this bandwidth is given by $7.99\pm 1.71$. Treating the bandwidth as
  nonrandom, it achieves coverage of at least 90\% over $\mathcal{F}_{RDT,2}(C)$
  as long as $C\leq C_{\text{conv}}=0.0018$. This is a rather low value, lower
  than the lower bound estimate on $C$ from
  \Cref{sec:estim-lower-bound}. It implies that even when $x=20\%$, the
  prediction error based on a linear Taylor approximation to $f$ can be reduced
  by less than 1\% by using the true conditional expectation.

  As an alternative to the conventional approach, one can use the robust-bias
  correction method studied in \citet{cct14}. \citet{cct14} show that if the
  pilot bandwidth and the kernel used by the bias estimator equal those used by
  the local linear estimator of $Lf$, this method is equivalent to running a
  quadratic instead of a linear local regression, and then using the usual CI\@.
  In the \citeauthor{lee08} application with IK bandwidth, this delivers the CI
  $6.68\pm 2.52$, increasing the half-length substantially relative to the
  conventional CI\@. The maximum smoothness parameter under which these CIs have
  coverage at least 90\% is given by $C_{RBC}=0.0023>C_{\text{conv}}$. By way of
  comparison, the optimal 95\% fixed-length CIs at $C_{RBC}$ leads to a much
  narrower CI given by $7.70\pm 2.11$.

  While the CCT CI maintains good coverage for a larger smoothness constant than
  the conventional CI, both constants are rather small (equivalently, coverage
  is bad for moderate values of $C$). This is an artifact of the large realized
  value of $\hat h_{IK}$: the CCT CI essentially ``undersmooths'' relative to a
  given bandwidth by making the bias-standard deviation ratio smaller. Since
  $\hat h_{IK}$ is large to begin with, the amount of undersmoothing is not
  enough to make the procedure robust to moderate values of $C$. In fact, the IK
  bandwidth is generally quite sensitive to tuning parameter choices: we show in
  a Monte Carlo study in \Cref{sec:monte-carlo-evidence} that the CCT
  implementation of the IK bandwidth yields smaller bandwidths and achieves good
  coverage over a much larger set of functions, at the cost of larger length. In
  finite samples, the tuning parameters drive the maximum bias of the estimator,
  and hence its coverage properties, even though under standard pointwise
  asymptotics, the tuning parameters shouldn't affect coverage.

  In contrast, if one performs the CCT procedure starting from a minimax MSE
  optimal bandwidth based on a known smoothness constant $C$, the asymptotic
  coverage will be quite good (above $94\%$), although the CCT CI ends up being
  about $30\%$ longer than the optimal CI \citep[see][]{ArKo15}. Thus, while
  using a data driven bandwidth selector such as IK for inference can lead to
  severe undercoverage for smoothness classes used in RD (even if one
  undersmooths or bias-corrects as in CCT), procedures such as RBC can have good
  coverage if based on an appropriate bandwidth choice that is fixed ex ante.

  \subsection{Monte Carlo evidence with random
    bandwidth}\label{sec:monte-carlo-evidence}

  Corollaries~\ref{th:centrosymmetric_adaptation_corollary}
  and~\ref{th:centrosymmetric_adaptation_twosided} imply that confidence
  intervals based on data-driven bandwidths must either undercover or else
  cannot be shorter than fixed-length CIs that assume worst-case smoothness. We
  now illustrate this implication with a Monte Carlo study.

  We consider the RD setup from \Cref{sec:simple-example}. To help
  separate the difficulty in constructing CIs for $Lf$ due to unknown smoothness
  of $f$ from that due to irregular design points or heteroskedasticity, for all
  designs below, the distribution of $x_{i}$ is uniform on $[-1,1]$, and $u_{i}$
  is independent of $x_{i}$, distributed $\mathcal{N}(0,\sigma^{2})$. The sample
  size is $n=500$ in each case.

  For $\sigma^{2}$, we consider two values, $\sigma^{2}=0.1295$, and
  $\sigma^{2}=4\times 0.1295=0.518$. We consider conditional mean functions $f$
  that lie in the smoothness class
  \begin{equation*}
    \mathcal{F}_{RDH,2}(C)= \left \{f_+-f_-\colon
    f_+\in\mathcal{F}_{H,2}(C;\mathbb{R}_+),\;
    f_{-}\in\mathcal{F}_{H,2}(C;\mathbb{R}_{-}) \right\},
  \end{equation*}
  where $\mathcal{F}_{H,p}(C;\mathcal{X})$ is the second-order Hölder class, the
  closure of twice-differentiable functions with second derivative bounded by
  $2C$, uniformly over $\mathcal{X}$:
  \begin{equation*}
    \mathcal{F}_{H,p}(C;\mathcal{X}) =\left\{f\colon \left|
    f'(x_{1})-f'(x_{2})\right|\le 2C|x_{1}-x_{2}|\text{ all
    }x_{1},x_{2}\in\mathcal{X}\right\}.
  \end{equation*}
  Unlike the class $\mathcal{F}_{RDT,2}(C)$, the class $\mathcal{F}_{RDH,2}(C)$ also
  imposes smoothness away from the cutoff, so that
  $\mathcal{F}_{RDH,2}(C)\subseteq \mathcal{F}_{RDT,2}(C)$. Imposing smoothness
  away from the cutoff is natural in many empirical applications. We consider
  $C=1$ and $C=3$, and for each $C$, we consider 4 different shapes for $f$. In
  each case, $f$ is odd, $f_{+}=-f_{-}$. In Designs 1 through 3, $f_{+}$ is
  given by a quadratic spline with two knots, at $b_{1}$ and $b_{2}$,
  \begin{equation*}
    f_{+}(x)=\1{x\geq 0}\cdot C\left(
      x^{2}-2(x-b_{1})_{+}^{2}+2(x-b_{2})_{+}^{2}\right).
  \end{equation*}
  In Design 1 the knots are given by $(b_{1},b_{2})=(0.45,0.75)$, in Design 2 by
  $(0.25,0.65)$, and in Design 3 by $(0.4,0.9)$. The function $f_{+}(x)$ is
  plotted in Figure~\ref{fig:reg-fkt} for $C=1$. For $C=3$, the function $f$ is
  identical up to scale. It is clear from the figure that although locally to
  the cutoff, the functions are identical, they differ away from the cutoff (for
  $\abs{x}\geq 0.25$), which, as we demonstrate below, affects the performance
  of data-driven methods. Finally, in Design 4, we consider $f(x)=0$ to allow us
  to compare the performance of CIs when $f$ is as smooth as possible.

  We consider four methods for constructing CIs based on data-driven bandwidths,
  and two fixed-length CIs. All CIs are based on local polynomial regressions
  with a triangular kernel. The variance estimators used to construct the CIs
  are based on the nearest-neighbor method described in
  Remark~\ref{remark:nn-variance}. The results based on Eicker-Huber-White
  variance estimators are very similar and not reported here.

  The first two methods correspond to conventional CIs based on local linear
  regression described in \Cref{sec:exact-coverage}. The first CI uses
  \citet[IK]{ik12restud} bandwidth selector $\hat{h}_{IK}$, and the second CI
  uses a bandwidth selector proposed in \citet[CCT]{cct14}, $\hat{h}_{CCT}$. The
  third CI uses the robust bias correction (RBC) studied in CCT, with both the
  pilot and the main bandwidth given by $\hat{h}_{IK}$ (the main estimate is
  based on local linear regression, and the bias correction is based on local
  quadratic regression), so that the bandwidth ratio is given by $\rho=1$. The
  fourth CI is also based on RBC, but with the main and pilot bandwidth
  potentially different and given by the \citet{cct14} bandwidth selectors.
  Finally, we consider two fixed-length CIs with uniform coverage under the
  class $\mathcal{F}_{RDH,2}(C)$, with $C=1,3$, and bandwidth chosen to minimize
  their half-length. Their construction is similar to the CIs considered in
  \Cref{sec:pract-impl}, except they use the fact that under
  $\mathcal{F}_{RDH,2}(C)$, the maximum bias for local linear estimators based
  on a fixed bandwidth is attained at $g^{*}(x)=Cx^{2}\1{x\geq 0}-Cx^{2}\1{x<0}$
  (see \citealp{ArKo15}, for derivation).

  The results are reported in Table~\ref{tab:mc2-C1} for $C=1$
  and~\ref{tab:mc2-C3} for $C=3$. One can see from the tables that CIs based on
  $\hat{h}_{IK}$ may undercover severely even at the higher level of smoothness,
  $C=1$. In particular, the coverage of conventional CIs based on $\hat{h}_{IK}$
  is as low as 10.1\% for 95\% nominal CIs in Design 1, and the coverage of RBC
  CIs is as low as 64.4\%, again in Design 1. The undercoverage is even more
  severe when $C=3$.

  In contrast, CIs based on the CCT bandwidth selector perform much better in
  terms of coverage under $C=1$, with coverage over 90\% for all designs. These
  CIs only start undercovering once $C=3$, with 80.7\% coverage in Design 3 for
  conventional CIs, and 86.2\% coverage for RBC CIs. The cost for the good coverage
  properties, as can be seen from the tables, is that the CIs are longer,
  sometimes much longer than optimal fixed-length CIs.

  As discussed in \Cref{sec:exact-coverage}, the dramatically different coverage
  properties of the CIs based on the IK and CCT bandwidths illustrates the point
  that the coverage of CIs based on data-driven bandwidths is governed by the
  tuning parameters used in defining the bandwidth selector. These results can
  also be interpreted as showing the limits of procedures that try to ``estimate
  $C$'' from the data. In particular, we show in \citet{ArKo15} that for
  inference at a point based on local linear regression under the second-order
  Hölder class, in large samples the MSE-optimal bandwidth (see
  Remark~\ref{MSE_bandwidth_remark}) differs from the usual (infeasible)
  bandwidth minimizing the large-sample MSE under pointwise asymptotics only in
  that it replaces $f''(0)$ with $C$. Thus, plug-in rules that estimate the
  infeasible pointwise bandwidth by plugging in an estimate of $f''(0)$ can be
  interpreted as data-driven bandwidths that try to estimate $C$ from the data.
  Since the IK and CCT bandwidths are plug-in rules, to the extent that one can
  interpret them as trying to ``estimate $C$'' from the data, these simulation
  results also illustrate the point that attempts to estimate $C$ from the data
  cannot improve upon FLCIs (one can show that if these procedures were
  successful at estimating $C$, conventional CIs with 95\% nominal level based
  on them should have coverage no less than 92.1\% in large samples).

  To assess sensitivity of these results to the normality and homoskedasticity
  of the errors, we also considered Designs 1--4 with heteroskedastic and
  log-normal errors. The results (not reported here) are similar in the sense
  that if a particular method achieved close to 95\% coverage under normal
  homoskedastic errors, the coverage remained good under alternative error
  distributions. If a particular method undercovered in a given design, the
  amount of undercoverage could be more or less severe, depending on the form of
  heteroskedasticity. In particular, fixed-length CIs with $C=3$ achieve
  excellent coverage for all designs and all error distributions considered.

\section{Additional details for \Cref{sec:general_results}}\label{sec:ap:details}

This section contains details for the results in \Cref{sec:general_results} not
included in the main text.

\subsection{Special cases}%
\label{sec:ap:special_cases}

In addition to regression discontinuity, the regression
model~\eqref{eq:fixed_design_eq} covers several other important models,
including inference at a point ($Lf=f(x_0)$ with $x_0$ given) and average
treatment effects under unconfoundedness (with
$Lf=\frac{1}{n}\sum_{i=1}^{n}(f(w_{i},1)-f(w_{i},0))$ where
$x_{i}=(w_{i}', d_{i})'$, $d_{i}$ is a treatment indicator and $w_{i}$ are
controls).

The setup~\eqref{eq:donoho-model} can also be used to study the linear
regression model with restricted parameter space. For simplicity, consider
the case with homoskedastic errors,
\begin{equation}\label{eq:linear_reg_model}
  Y=X\theta+\sigma\varepsilon,
  \quad \varepsilon\sim\mathcal{N}(0,I_n),
\end{equation}
where $X$ is a fixed $n\times k$ design matrix and $\sigma$ is known. This fits
into our framework with $f=\theta$, $X$ playing the role of $K$, taking
$\theta\in\mathbb{R}^k$ to $X\theta\in\mathbb{R}^n$, and
$\mathcal{Y}=\mathbb{R}^n$ with the Euclidean inner product
$\langle x,y\rangle=x'y$. We are interested in a linear functional
$L\theta=\ell'\theta$ where $\ell\in \mathbb{R}^k$. We consider this model in
previous version of this paper \citep{ArKo15optimal}.
Furthermore,~\eqref{eq:donoho-model} covers the multivariate normal location
model $\hat\theta\sim\mathcal{N}(\theta,\Sigma)$, which obtains as a
limiting experiment of regular parametric models. Our finite-sample results
could thus be extended to local asymptotic results in regular parametric models
with restricted parameter spaces.

In addition to the regression models~\eqref{eq:fixed_design_eq}
and~\eqref{eq:linear_reg_model}, the setup~\eqref{eq:donoho-model} includes
other nonparametric and semiparametric regression models such as the partly
linear model (where $f$ takes the form $g(w_1)+\gamma'w_2$, and we are
interested in a linear functional of $g$ or $\gamma$). It also includes the
Gaussian white noise model, which can be obtained as a limiting model for
nonparametric density estimation \citep[see][]{nussbaum_asymptotic_1996} as well
as nonparametric regression with fixed or random regressors
\citep[see][]{BrLo96,reiss08}. These white noise equivalence results imply that
our finite-sample results translate to asymptotic results in problems such as
inference at a point in density estimation or regression with random regressors.
We refer the reader to \citet[Section 9]{donoho94} for details of these and
other models that fit into the general setup~\eqref{eq:donoho-model}.

\subsection{Derivative of the modulus}\label{translation_invariance_sec}

The class of optimal estimators $\hat L_{\delta,\mathcal{F},\mathcal{G}}$
involves the superdifferential of the modulus. In the case where the modulus is
differentiable, the superdifferential is a singleton, so that
$\hat L_{\delta,\mathcal{F},\mathcal{G}}$ is defined uniquely. In this section,
we introduce a condition that guarantees differentiability and leads to a
formula for the derivative. We also briefly discuss the case where the modulus
is not differentiable.
\begin{definition}[Translation Invariance]
  The function class $\mathcal{F}$ is translation invariant if there exists a
  function $\iota\in\mathcal{F}$ such that $L\iota=1$ and
  $f+c\iota\in\mathcal{F}$ for all $c\in\mathbb{R}$ and $f\in\mathcal{F}$.
\end{definition}
Translation invariance will hold in most cases where the parameter of interest
$Lf$ is unrestricted. For example, if $Lf=f(0)$, it will hold with $\iota(x)=1$
if $\mathcal{F}$ places monotonicity restrictions and/or restrictions on the
derivatives of $f$. Under translation invariance, the modulus is differentiable,
and we obtain an explicit expression for its derivative:
\begin{lemma}\label{translation_invariant_derivative_lemma}
  Let $f^*$ and $g^*$ solve the modulus problem with
  $\delta_0=\|K(g^*-f^*)\|>0$, and suppose that $f^*+c \iota\in\mathcal{F}$ for
  all $c$ in a neighborhood of zero, where $L\iota=1$. Then the modulus is
  differentiable at $\delta_{0}$ with
  $\omega'(\delta_{0};\mathcal{F},\mathcal{G})=\delta_{0}/ \langle K\iota,K
  (g^{*}_{\delta_{0}}-f^{*}_{\delta_{0}})\rangle$.
\end{lemma}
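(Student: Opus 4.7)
The plan is to use the bias inequalities from Lemma~\ref{th:max_bias_lemma} as a characterization of the superdifferential and show, by perturbing $f^*$ in the direction of $\iota$, that any element of $\partial\omega(\delta_0;\mathcal{F},\mathcal{G})$ must equal the claimed value. Since $\omega(\cdot;\mathcal{F},\mathcal{G})$ is concave, uniqueness of the supergradient at $\delta_0$ is equivalent to differentiability there, so this yields both the differentiability claim and the formula.

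Concretely, let $d\in\partial\omega(\delta_0;\mathcal{F},\mathcal{G})$ be arbitrary. By the second inequality in~\eqref{between_class_bias_ineq} of Lemma~\ref{th:max_bias_lemma}, for every $f\in\mathcal{F}$,
\begin{equation*}
  Lf-Lf^*\ge d\,\frac{\langle K(g^*-f^*),K(f-f^*)\rangle}{\|K(g^*-f^*)\|}.
\end{equation*}
Apply this with $f=f^*+c\iota$ for $c$ in a (two-sided) neighborhood of $0$, where the assumption gives $f\in\mathcal{F}$. Using $L\iota=1$, the left-hand side equals $c$, and the right-hand side equals $cd\langle K\iota,K(g^*-f^*)\rangle/\delta_0$. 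Thus
\begin{equation*}
  c\ge c\,d\,\frac{\langle K\iota,K(g^*-f^*)\rangle}{\delta_0}\qquad\text{for all small $c\in\mathbb{R}$.}
\end{equation*}
Letting $c>0$ and then $c<0$ forces equality, so $d\langle K\iota,K(g^*-f^*)\rangle=\delta_0$. (In particular, this shows $\langle K\iota,K(g^*-f^*)\rangle\neq 0$, since otherwise the display would read $c\ge 0$ for all small $c$, a contradiction.) Hence every supergradient equals $\delta_0/\langle K\iota,K(g^*-f^*)\rangle$.

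Finally, because $\omega(\cdot;\mathcal{F},\mathcal{G})$ is a concave function on $(0,\infty)$, it is differentiable at any interior point where the superdifferential is a singleton, and the derivative coincides with the unique supergradient. This gives both differentiability at $\delta_0$ and the stated formula. The only real subtlety is ruling out $\langle K\iota,K(g^*-f^*)\rangle=0$, which falls out of the two-sided perturbation argument as indicated; everything else is a routine application of Lemma~\ref{th:max_bias_lemma} together with the definition of the superdifferential.
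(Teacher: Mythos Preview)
Your proof is correct. It takes a slightly different route from the paper's, so a brief comparison is worthwhile.

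The paper argues directly from the definitions: for $f_c=f^*-c\iota$ it chains the superdifferential inequality and the modulus definition to obtain
\[
L(g^*-f^*)+d\bigl[\|K(g^*-f_c)\|-\delta_0\bigr]\ge \omega(\|K(g^*-f_c)\|)\ge L(g^*-f_c)=L(g^*-f^*)+c,
\]
notes that the two ends agree at $c=0$, and equates derivatives in $c$, which requires computing $\frac{d}{dc}\|K(g^*-f_c)\|\big|_{c=0}=\langle K(g^*-f^*),K\iota\rangle/\delta_0$. You instead invoke Lemma~\ref{th:max_bias_lemma}, whose inequality~\eqref{between_class_bias_ineq} already linearizes the problem: plugging $f=f^*+c\iota$ gives $c\ge cA$ with $A=d\langle K\iota,K(g^*-f^*)\rangle/\delta_0$, and two-sided perturbation forces $A=1$. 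Your route is shorter and avoids the norm-derivative computation, at the cost of relying on Lemma~\ref{th:max_bias_lemma} (whose proof, in turn, contains essentially the same first-order argument). The paper's version is self-contained from the basic definitions. Both reach the same conclusion that the superdifferential is a singleton, which for a concave function is exactly differentiability.
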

\begin{proof}
  Let $d\in \partial\omega(\delta_0;\mathcal{F},\mathcal{G})$ and let
  $f_c=f^*-c\iota$. Let $\eta$ be small enough so that $f_c\in\mathcal{F}$ for
  $|c|\le \eta$. Then, for $|c|\le \eta$,
  \begin{equation*}
    L(g^*-f^*)
    +d\left[\|K(g^*-f_c)\|-\delta_0\right]
    \ge \omega(\|K(g^*-f_c)\|;\mathcal{F},\mathcal{G})
    \ge L(g^*-f_c)=L(g^*-f^*)+c
  \end{equation*}
  where the first inequality follows from the definition of the
  superdifferential and the second inequality follows from the definition of the
  modulus. Since the left-hand side of the above display is greater than or
  equal to the right-hand side for $\abs{c}\leq \eta$,
  and the two sides are equal at $c=0$, the derivatives of both sides with
  respect to $c$ must be equal. Since
  \begin{equation*}
    \frac{d\|K(g^*-f_c)\|}{d c}\bigg|_{c=0} =\frac{\frac{d}{d
        c}\|K(g^*-f_c)\|^2\big|_{c=0}}{2\delta_0} =\frac{\langle
      K(g^*-f^*),K\iota\rangle}{\delta_0},
  \end{equation*}
  result follows.
\end{proof}
The explicit expression for $\omega'(\delta;\mathcal{F},\mathcal{G})$ is useful
in simplifying the expressions~\eqref{eq:Ldelta_eq}
and~\eqref{eq:centrosymmetric_modulus_eq} for the optimal estimators.

Translation invariance leads to a direct relation between optimal CIs and tests.
In general, it can be seen from Lemma~\ref{convex_testing_lemma} that the test
that rejects $L_0$ when
$L_0\notin \hor{\hat{c}_{\alpha,\delta,\mathcal{F},\mathcal{G}},\infty}$ is
minimax for $H_0:Lf\le L_0$ and $f\in\mathcal{F}$ against
$H_1:Lf\ge L_0+\omega(\delta;\mathcal{F},\mathcal{G})$ and $f\in\mathcal{G}$,
where $L_0=Lf_{\delta}^*$. If both $\mathcal{F}$ and $\mathcal{G}$ are
translation invariant, $f_{\delta}^*+c\iota$ and $g_{\delta}^{*}+c\iota$ achieve
the ordered modulus for any $c\in\mathbb{R}$, so that, varying $c$, this test
can be seen to be minimax for any $L_0$. Thus, under translation invariance, the
CI in Theorem~\ref{th:one_sided_minimax_thm} inverts minimax one sided tests
with distance to the null given by $\omega(\delta)$ (in general, the test based
on the CI in Theorem~\ref{th:one_sided_minimax_thm} is minimax only when
$L_0=Lf_{\delta}^*$).

If the modulus is not differentiable at some $\delta$, the CIs defined in
\Cref{sec:onesided-cis,sec:two-sided-cis_main} are valid with
$\omega'(\delta,\mathcal{F},\mathcal{G})$ given by any element of the
superdifferential, so long as the same element of the superdifferential is used
throughout the formula (in particular, the same element used in the
estimator~\eqref{eq:Ldelta_eq} must be used in the worst-case bias
formula~\eqref{eq:maxbias-minbias}). For the one-sided CI,
Theorem~\ref{th:one_side_adapt_thm} applies regardless of which element of the
superdifferential is used. In the two-sided case, when computing the optimal
fixed-length affine CI described in \Cref{sec:two-sided-cis_main}, the only
additional detail in the case where the modulus is not everywhere differentiable
is that one optimizes the half-length over both $\delta$ and over elements in
the superdifferential.

\section{Additional details for RD}\label{sec:addit-deta-rd}

This section gives additional details for the RD application. \Cref{rd_bias_sec}
derives the worst-case bias formula given in~\eqref{eq:rd-worst-case-bias}.
\Cref{rd_modulus_solution_sec} derives the optimal estimator and the solution to
the modulus problem. \Cref{sec:estim-lower-bound} discusses lower bounds for the
smoothness constant $C$. \Cref{sec:asympt-valid-optim} shows the asymptotic
validity of the feasible version of the estimator in which the variance is
estimated.

\subsection{Worst-case bias for linear estimators}\label{rd_bias_sec}

This section derives the worst-case bias formula~\eqref{eq:rd-worst-case-bias}
for linear estimators $\hat L_{h_+,h_-}$ defined
in~\eqref{eq:rd-linear-estimator} in \Cref{sec:pract-impl}. We require the
weights to satisfy $w_{+}(-x,h_{+})=w_{-}(x,h_{-})=0$ for $x\geq 0$ and
\begin{equation}\label{w_unbiased_condition}
  \begin{aligned}
  \sum_{i=1}^{n}w_+(x_i,h_{+})&=\sum_{i=1}^{n}w_{-}(x_i,h_{-})=1,  \\
  \sum_{i=1}^{n}x_{i}^{j}w_{-}(x_i,h_{-})&=\sum_{i=1}^{n}x_{i}^{j}w_{+}(x_i,h_{+})=0
                                           \text{ for }j=1,\dotsc,p-1.
  \end{aligned}
\end{equation}
Note that~\eqref{w_unbiased_condition} holds iff. $\hat L_{h_+,h_-}$ is unbiased
at all $f=f_++f_-$ where $f_+$ and $f_-$ are both polynomials of order $p-1$ or
less, which is necessary to ensure that the worst-case bias is finite. This
condition holds if $\hat{L}_{h_{+},h_{-}}$ is based on a local polynomial
estimator of order at least $p-1$.

We can write any function $f\in\mathcal{F}_{RDT,p}(C)$ as $f=f_++f_-$, where
\begin{align*}
f_+(x)&=[\sum_{j=0}^{p-1}f_+^{(j)}(0)x^j/j!+r_+(x)]\1{x\geq 0},&
f_{-}(x)&=[\sum_{j=0}^{p-1}f_-^{(j)}(0)x^j/j!+r_-(x)]\1{x<0},
\end{align*}
and the remainder terms $r_{+}$ and $r_{-}$ satisfy $|r_+(x)|\le C|x|^p$ and
$|r_-(x)|\le C|x|^p$. Under~\eqref{w_unbiased_condition}, we can therefore write
\begin{equation*}
\bias_{f}(\hat{L}_{h_{+},h_{-}})=  \sum_{i=1}^{n} w_+(x_i,h_+)r_+(x)
  -\sum_{i=1}^{n} w_-(x_i,h_+)r_-(x),
\end{equation*}
which is maximized subject to the conditions $|r_+(x)|\le C|x|^p$ and
$|r_-(x)|\le C|x|^p$ by taking
$r_+(x_i)=C|x_i|^p\cdot \text{sign}(w_+(x_i,h_+))$ and
$r_-(x_i)=-C|x_i|^p\cdot \text{sign}(w_-(x_i,h_-))$. This yields the worst-case
bias formula~\Cref{eq:rd-worst-case-bias}.

\subsection{Solution to the modulus problem and optimal estimators}\label{rd_modulus_solution_sec}

This section derives the form of the optimal estimators and CIs. To that end, we
first need to find functions $g^{*}_{\delta}$ and $f^{*}_{\delta}$ that solve
the modulus problem. Since the class $\mathcal{F}_{RDT,p}(C)$ is
centrosymmetric, $f^{*}_{\delta}=-g^{*}_{\delta}$, and the (single-class)
modulus of continuity $\omega(\delta;\mathcal{F}_{RDT,p}(C))$ is given by the
value of the problem
\begin{equation}\label{rd_modulus_eq}
  \sup_{f_{+}+f_{-}\in\mathcal{F}_{RDT,p}(C)}
  2(f_{+}(0)-f_{-}(0))
  \quad
  \text{st}
  \quad
  \sum_{i=1}^{n}\frac{f_{-}(x_{i})^{2}}{\sigma^{2}(x_{i})}+
  \sum_{i=1}^{n}\frac{f_{+}(x_{i})^{2}}{\sigma^{2}(x_{i})}\leq \delta^{2}/4.
\end{equation}
Let $g^*_{\delta,C}$ denote the (unique up to the values at the $x_{i}$s)
solution to this problem. This solution can be obtained using a simple
generalization of Theorem 1 of \citet{SaYl78}. To describe it, define
$g_{b,C}(x)=g_{+,b,C}(x)+g_{-,b,C}(x)$ by
\begin{align*}
  g_{+,b,C}(x)
  &= \left((b-b_{-}+\textstyle\sum_{j=1}^{p-1}d_{+,j}x^{j}-C|x|^{p})_{+}
    -(b-b_{-}+\textstyle\sum_{j=1}^{p-1}d_{+,j}x^{j}+C|x|^{p})_{-}\right)\1{x\geq
    0},\\
  g_{-,b,C}(x)
  &= -\left((b_{-}+\textstyle\sum_{j=1}^{p-1}d_{-}x^{j}-C|x|^{p})_{+}
    -(b_{-}+\textstyle\sum_{j=1}^{p-1}d_{-,j}x^{j}+C|x|^{p})_{-}\right)\1{x<0},
\end{align*}
where we use the notation $(t)_+=\max\{t,0\}$ and $(t)_-=-\min\{t,0\}$. The
solution is given by $g^*_{\delta,C}=g_{b(\delta),C}$ where the coefficients
$d_{+}=(d_{+,1},\dotsc,d_{-,p-1})$, $d_{-}=(d_{-,1},\dotsc,d_{-,p-1})$, and
$b(\delta)$ and $b_{-}$ solve a system of equations given below. To see that the
solution must take the form $g_{b,C}(x)$ for some $b,b_-,d_+,d_-$, note that any
function $f_+\in\mathcal{F}_{T,p}(C)$ can be written as
\begin{equation}\label{eq:fplus-form}
  f_+(x)=b_++\sum_{j=1}^{p-1}d_{+,j}x^j+r_+(x),\qquad |r_+(x)|\le C|x|^p.
\end{equation}
Given $b_+,d_+$, in order to minimize $|f_+(x_i)|$ simultaneously for all $i$,
it must be that
\begin{equation*}
  r_{+}(x)=\begin{cases}
    -C|x|^p & \text{if $b_++\sum_{j=1}^{p=1}d_{+,j}x^j\ge C|x|^p$,} \\
    -b_+-\sum_{j=1}^{p=1}d_{+,j}x^j
    & \text{if $\abs{b_++\sum_{j=1}^{p=1}d_{+,j}x^j}< C|x|^p$,}\\
C\abs{x}^{p}    &\text{if $b_++\sum_{j=1}^{p=1}d_{+,j}x^j\le -C|x|^p$}.
\end{cases}
\end{equation*}
This form of $r(x)$ is necessary for $f_{+}$ to solve~\eqref{rd_modulus_eq}:
otherwise, one could strictly decrease
$\sum_{i=1}^n[f_-(x_i)^2/\sigma^2(x_i)+f_+(x_i)^2/\sigma^2(x_i)]$, thereby
making this quantity strictly less than $\delta^2/4$. But this would allow for a
strictly larger value of $2(f_+(0)+f_-(0))$ by increasing $b_+$ and leaving
$d_+$ and $r_+$ the same. Plugging $r_{+}(x)$ from the above display
into~\eqref{eq:fplus-form} shows that $f_+(x)=g_{+,b,C}(x)$ for some $b_+,d_+$.
Similar arguments apply for $f_{-}$.

Setting up the Lagrangian for the problem with $f$ constrained to the class of
functions that take the form $g_{b,C}$ for some $b,b_-,d_+,d_-$, and taking
first order conditions with respect to $b_-$, $d_+$ and $d_-$ gives
\begin{align}
  0&=\sum_{i=1}^{n}\frac{g_{-,b,C}(x_{i})}{\sigma^{2}(x_{i})}
     \left(x_{i},\dotsc,x_{i}^{p-1}\right)',    \label{eq:rd:d-condition1}\\
  0&=\sum_{i=1}^{n}\frac{g_{+,b,C}(x_{i})}{\sigma^{2}(x_{i})}
     \left(x_{i},\dotsc,x_{i}^{p-1}\right)',
     \label{eq:rd:d-condition2}\\
  0&=\sum_{i=1}^{n}\frac{ g_{+,b,C}(x_{i})}{\sigma^{2}(x_{i})}
     +\sum_{i=1}^{n}\frac{g_{-,b,C}(x_{i})}{\sigma^{2}(x_{i})}. \label{eq:rd:one-sided-condition}
\end{align}
The constraint in~\eqref{rd_modulus_eq} must be binding at the optimum, which
gives the additional equation
\begin{equation}\label{eq:rd:delta-b}
  \delta^{2}/4=  \sum_{i=1}^{n}\frac{g_{b,C}(x_{i})^{2}}{
    \sigma^{2}(x_{i})}  =b\sum_{i=1}^{n}\frac{g_{+,b,C}(x_{i})}{
    \sigma^{2}(x_{i})}-C\sum_{i=1}^{n}
  \frac{\abs{g_{b,C}(x_{i})}|x_{i}|^{p}}{
    \sigma^{2}(x_{i})},
\end{equation}
where the second equality follows
from~\eqref{eq:rd:d-condition1}--\eqref{eq:rd:d-condition2}. Note also
that, since $g^*_{\delta,C}=g_{b(\delta),C}$ solves the modulus problem and
gives the modulus as $2b(\delta)$, it also gives the solution to the inverse
modulus problem
\begin{equation}\label{rd_inverse_modulus_eq}
  \frac{\omega^{-1}(2b;\mathcal{F}_{RDT,p}(C))^{2}}{4} =\inf_{f_{+}-f_{-}\in\mathcal{F}_{RDT,p}(C)}
  \sum_{i=1}^n\left( \frac{f_+^2(x_i)}{\sigma^2(x_i)} +
    \frac{f_-^2(x_i)}{\sigma^2(x_i)}\right) \text{ s.t. } 2(f_+(0)-f_-(0))\ge 2b
\end{equation}
for $b=b(\delta)$. Since the objective for the inverse modulus is strictly
convex, this shows that the solution is unique up to the values at the $x_i$s.

Using the fact that the class $\mathcal{F}_{RDT,p}(C)$ is translation invariant
as defined in \Cref{translation_invariance_sec} (we can take
$\iota(x)=c_{0}+\1{x\geq 0}$ for any $c_{0}$), so that the derivative of the
modulus is given by~\Cref{translation_invariant_derivative_lemma}, along
with~\eqref{eq:rd:one-sided-condition} implies that the class of estimators
$\hat{L}_{\delta}$ can be written as
\begin{equation}\label{Ldelta_rd_eq}
  \hat{L}_{\delta}=  \hat{L}_{\delta,\mathcal{F}_{RDT,p}(C)}
  =
  \frac{\sum_{i=1}^{n}g^{*}_{+,\delta,C}(x_{i})y_{i}/\sigma^{2}(x_{i})}{
    \sum_{i=1}^{n}g^{*}_{+,\delta,C}(x_{i})/\sigma^{2}(x_{i})}
  -\frac{\sum_{i=1}^{n}g^{*}_{-,\delta,C}(x_{i})y_{i}/\sigma^{2}(x_{i})}{
    \sum_{i=1}^{n}g^{*}_{-,\delta,C}(x_{i})/\sigma^{2}(x_{i})}.
\end{equation}
Note that Conditions~\eqref{eq:rd:d-condition1},~\eqref{eq:rd:d-condition2},
and~\eqref{eq:rd:one-sided-condition} are simply the conditions
(\ref{w_unbiased_condition}) applied to this class of estimators.

To write the estimator $\hat{L}_{\delta}$ in the
form~\eqref{eq:rd-linear-estimator}, let
$w_{-}(x_{i},h_{-})=g_{-,b,C}(x_{i})/\sum_{i=1}^{n}g_{-,b,C}(x_{i})$ and
$w_{+}(x_{i},h_{+})=g_{+,b,C}(x_{i})/\sum_{i=1}^{n}g_{+,b,C}(x_{i})$, where
$d_{+}$ and $d_{-}$ solve~\eqref{eq:rd:d-condition1}
and~\eqref{eq:rd:d-condition2} with $b-b_{-}=Ch_{+}^{p}$ and $b_{-}=Ch_{-}^{p}$.
Then $\hat L_\delta=\hat L_{h_+(\delta),h_-(\delta)}$ where $h_+(\delta)$ and
$h_-(\delta)$ are determined by the additional
conditions~\eqref{eq:rd:one-sided-condition} and~\eqref{eq:rd:delta-b}.

To find the optimal estimators as described in \Cref{sec:pract-impl}, one can
use the estimator $\hat{L}_{h_{+},h_{-}}$ and optimize $h_{+}$ and $h_{-}$ for
the given performance criterion, using the variance and worst-case bias formulas
given in that section. Since the optimal estimator $\hat L_{\delta}$ (with
$\delta$ determined by the performance criterion) takes this form for some $h_+$
and $h_-$, the resulting estimator and CI will be the same as the one obtained
by computing $\hat L_\delta$ with $\delta$ determined by solving the additional
equation that corresponds to the performance criterion of interest.

\subsection{Lower bound on $C$}%
\label{sec:estim-lower-bound}

While it is not possible to consistently estimate the smoothness constant $C$
from the data, it is possible to lower bound its value. Here we develop a simple
estimator and lower CI for this bound, focusing on the case
$f\in\mathcal{F}_{RDT,2}(C)$.

As noted in \Cref{rd_modulus_solution_sec}, we can write
$f_{+}(x)=f_{+}(0)+f_{+}'(0)x+r_{+}(x)$, where $\abs{r_{+}(x)}\leq Cx^{2}$. It
therefore follows that for any three points $0\leq x_{1}\leq x_{2}\leq x_{3}$,
\begin{equation*}
  \lambda f_{+}(x_{1})+  (1-\lambda)f_{+}(x_{3})-f_{+}(x_{2})=
  \lambda r_{+}(x_{1})+  (1-\lambda)r_{+}(x_{3})-r_{+}(x_{2}),
\end{equation*}
where $\lambda=(x_{3}-x_{2})/(x_{3}-x_{1})$. The left-hand side measures the
curvature of $f$ by comparing $f(x_{2})$ to an approximation based on linearly
interpolating between $f(x_{1})$ and $f(x_{3})$. Since
$\abs{r_{+}(x)}\leq Cx^{2}$, the right-hand side is bounded by
$C(\lambda x_{1}^{2}+(1-\lambda)x_{3}^{3}+x_{2}^{2})$. Taking averages of the
preceding display over intervals $I_{k}=\hor{a_{k-1},a_{k}}$ where
$a_{0}\leq a_{1}\leq a_{2}\leq a_{3}$ and applying this bound yields the lower
bound
\begin{equation*}
  C\geq \abs{\mu_{+}},\qquad
  \mu_{+}=
  \frac{\lambda E_{n,1}(f_{+}(x))+  (1-\lambda)
    E_{n,3}(f_{+}(x))-E_{n,2}(f_{+}(x))}{
    \lambda E_{n,1}(x^{2})+  (1-\lambda)
    E_{n,3}(x^{2})+E_{n,2}(x^{2})},
\end{equation*}
where we use the notation
$E_{n,k}(g(x) )= \sum_{i}\1{x_{i}\in I_{k}}g(x_{i})/n_{k}$,
$n_{k}=\sum_{i}\1{x_{i}\in I_{k}}g(x_{i})$ to denote sample average over
$I_{k}$. Replacing $E_{n,k}(f_{+}(x))$ with $E_{n,k}(y)$ yields the estimator of
$\mu_{+}$
\begin{equation*}
  Z=\frac{\lambda E_{n,1}(y)+  (1-\lambda)
    E_{n,3}(y)-E_{n,2}(y)}{
    \lambda E_{n,1}(x^{2})+  (1-\lambda)
    E_{n,3}(x^{2})+E_{n,2}(x^{2})}\sim\mathcal{N}\left(\mu_{+},\tau^{2}\right),
\end{equation*}
where
$\tau^{2}=\frac{\lambda^{2} E_{n,1}(\sigma^{2}(x))/n_{1}+ (1-\lambda)^{2}
  E_{n,3}(\sigma^{2}(x))/n_{3}-E_{n,2}(\sigma^{2}(x))/n_{2}}{( \lambda
  E_{n,1}(x^{2})+ (1-\lambda) E_{n,3}(x^{2})+E_{n,2}(x^{2}))^{2}}$. Inverting
tests of the hypotheses $H_{0}\colon \abs{\mu_{+}}\leq \mu_{0}$ against
$H_{1}\colon\abs{\mu_{+}}> \mu_{0}$ then yields a one-sided CI for
$\abs{\mu_{+}}$ of the form $\hor{\hat{\mu}_{+,\alpha},\infty}$, where
$\hat{\mu}_{+,\alpha}$ solves $\abs{Z/\tau}=\cv_{\alpha}(\mu/\tau)$, with the
convention that $\hat{\mu}_{+,\alpha}=0$ if $\abs{Z/\tau}\leq \cv_{\alpha}(0)$.
This CI can be used as a lower CI for $C$ in model specification checks.

Since unbiased estimates of the lower bound $\abs{\mu_{+}}$ do not exist,
following \citet{clr13}, we take $\hat{\mu}_{+,0.5}$ as an estimator of the
lower bound, which has the property that it's half-median unbiased in the sense
that $P(\abs{\mu_{+}}\leq \hat{\mu}_{+,0.5})\leq 0.5$. An analogous bound
obtains by considering intervals below the cutoff. We leave the question of
optimal choice of the intervals $I_{k}$ to future research. In the \citet{lee08}
application, we set $a_{0}=0$, and set the remaining interval endpoints $a_{k}$
such that each interval $I_{k}$ contains 200 observations. This yields estimates
$\hat{\mu}_{+,0.5}=0.008$ and $\hat{\mu}_{-,0.5}=0.017$.

\subsection{Asymptotic validity}%
\label{sec:asympt-valid-optim}

We now give a theorem showing asymptotic validity of CIs from
\Cref{sec:pract-impl} under an unknown error distribution. We consider uniform
validity over regression functions in $\mathcal{F}$ and error distributions in a
sequence $\mathcal{Q}_n$, and we index probability statements with
$f\in\mathcal{F}$ and $Q\in\mathcal{Q}_n$. We make the following assumptions on
the $x_i$s and the class of error distributions $\mathcal{Q}_n$.

\begin{assumption}\label{rd_xs_assump_main_text}
  For some $p_{X,+}(0)>0$ and $p_{X,-}(0)>0$, the sequence $\{x_i\}_{i=1}^{n}$
  satisfies
  $\frac{1}{n h_n}\sum_{i=1}^{n}m(x_i/h_n)\1{x_i\geq 0} \to
  p_{X,+}(0)\int_0^\infty m(u)\, du$ and
  $\frac{1}{n h_n}\sum_{i=1}^{n}m(x_i/h_n)\1{x_i<0}\to
  p_{X,-}(0)\int_{-\infty}^0 m(u)\, du$ for any bounded function $m$ with
  bounded support and any $h_n$ with
  $0<\liminf_n h_n n^{1/(2p+1)}\le \limsup_n h_n n^{1/(2p+1)}<\infty$.
\end{assumption}

\begin{assumption}\label{rd_errors_assump_main_text}
  For some $\sigma(x)$ with $\lim_{x\downarrow 0}\sigma(x)=\sigma_+(0)>0$ and
  $\lim_{x\uparrow 0}\sigma(x)=\sigma_-(0)>0$,
  \begin{itemize}
  \item[(i)] the $u_i$s are independent under any $Q\in\mathcal{Q}_n$ with
    $E_Q u_i=0$, $\text{var}_Q(u_i)=\sigma^2(x_i)$
  \item[(ii)] for some $\eta>0$, $E_Q|u_i|^{2+\eta}$ is bounded uniformly over
    $n$ and $Q\in\mathcal{Q}_n$.
  \end{itemize}
\end{assumption}

While the variance function $\sigma^2(x)$ is unknown, the definition of
$\mathcal{Q}_n$ is such that the variance function is the same for all
$Q\in\mathcal{Q}_n$. This is done for simplicity. One could consider uniformity
over classes $\mathcal{Q}_n$ that place only smoothness conditions on
$\sigma^2(x)$ at the cost of introducing additional notation and making the
optimality statements more cumbersome.

The estimators and CIs that we consider in the sequel are based on an estimate
$\hat\sigma(x)$ of the conditional variance in Step~\ref{item:step1} of the
procedure in \Cref{sec:pract-impl}. We make the following assumption on this
estimate.

\begin{assumption}\label{estimated_tilde_sigma_assump_main_text}
  The estimate $\hat\sigma(x)$ is given by
  $\hat\sigma(x)=\hat\sigma_+(0)\1{x\geq 0}+\hat\sigma_-(0)\1{x<0}$ where
  $\hat\sigma_+(0)$ and $\hat\sigma_-(0)$ are consistent for $\sigma_+(0)$ and
  $\sigma_-(0)$ uniformly over $f\in\mathcal{F}$ and $Q\in\mathcal{Q}_n$.
\end{assumption}

For asymptotic coverage, we consider uniformity over both $\mathcal{F}$ and
$\mathcal{Q}_n$. Thus, a confidence set $\mathcal{C}$ is said to have asymptotic
coverage at least $1-\alpha$ if
\begin{equation*}
  \liminf_{n\to\infty}\inf_{f\in\mathcal{F},Q\in\mathcal{Q}_n}
  P_{f,Q}\left(Lf\in\mathcal{C}\right)\ge 1-\alpha.
\end{equation*}

\begin{theorem}\label{rd_optimal_estimator_thm_main_text}
  Under
  Assumptions~\ref{rd_xs_assump_main_text},~\ref{rd_errors_assump_main_text}
  and~\ref{estimated_tilde_sigma_assump_main_text}, CIs given in
  \Cref{sec:pract-impl} based on $\hat{L}_{\delta}$ have asymptotic coverage at
  least $1-\alpha$. CIs based on local polynomial estimators have asymptotic
  coverage at least $1-\alpha$ so long as the kernel is bounded and uniformly
  continuous with bounded support and the bandwidths $h_+$ and $h_-$ satisfy
  $h_+ n^{1/(2p+1)}\to h_{+,\infty}$ and $h_- n^{1/(2p+1)}\to h_{-,\infty}$ for
  some $h_{+,\infty}>0$ and $h_{-,\infty}>0$.

  Let $\hat \chi$ denote the half-length of the optimal fixed-length CI based on
  $\hat\sigma(x)$. For $\chi_\infty$ given in \Cref{sec:sap:rd_asym}, the scaled half-length $n^{p/(2p+1)}\hat \chi$
  converges in probability to $\chi_{\infty}$ uniformly over $\mathcal{F}$ and
  $\mathcal{Q}_n$. If, in addition, each $\mathcal{Q}_n$ contains a distribution
  where the $u_i$s are normal, then for any sequence of confidence sets
  $\mathcal{C}$ with asymptotic coverage at least $1-\alpha$, we have the
  following bound on the asymptotic efficiency improvement at any
  $f\in\mathcal{F}_{RDT,p}(0)$
  \begin{equation*}
    \liminf_{n\to\infty}\sup_{Q\in\mathcal{Q}_n} \frac{n^{p/(2p+1)}E_{f,Q} \lambda(\mathcal{C})}
    {2 \chi_\infty}
    \ge
    \frac{(1-\alpha) 2^r E[(z_{1-\alpha}-Z)^r\mid Z\leq z_{1-\alpha}]}
    {2r\inf_{\delta>0}\cv_\alpha\left((\delta/2)(1/r-1)\right)\delta^{r-1}},
  \end{equation*}
  where $Z\sim\mathcal{N}(0,1)$ and $r=2p/(2p+1)$.

  Letting $\hat c_{\alpha,\delta}$ denote the lower endpoint of the one-sided CI
  corresponding to $\hat L_\delta$, the CI $\hor{\hat c_{\alpha,\delta},\infty}$
  has asymptotic coverage at least $1-\alpha$. If $\delta$ is chosen to minimax
  the $\beta$ quantile excess length, (i.e. $\delta=z_\beta+z_{1-\alpha}$),
  then, if each $\mathcal{Q}_n$ contains a distribution where the $u_i$s are
  normal, any other one-sided CI $\hor{\hat c,\infty}$ with asymptotic coverage
  at least $1-\alpha$ must satisfy the efficiency bound
  \begin{equation*}
    \liminf_{n\to\infty}
    \frac{\sup_{f\in\mathcal{F},Q\in\mathcal{Q}_n} q_{f,Q,\beta}\left(Lf-\hat c\right)}
    {\sup_{f\in\mathcal{F},Q\in\mathcal{Q}_n} q_{f,Q,\beta}\left(Lf-\hat c_{\alpha,\delta}\right)}\ge 1.
  \end{equation*}
  In addition, we have the following bound on the asymptotic efficiency
  improvement at any $f\in\mathcal{F}_{RDT,p}(0)$:
  \begin{equation*}
    \liminf_{n\to\infty}
    \frac{\sup_{Q\in\mathcal{Q}_n}q_{f,Q,\beta}\left(Lf-\hat c\right)}
    {\sup_{Q\in\mathcal{Q}_n} q_{f,Q,\beta}\left(Lf-\hat c_{\alpha,\delta}\right)}
    \ge \frac{2^{r}}{1+r}.
  \end{equation*}
\end{theorem}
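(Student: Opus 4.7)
The proof splits into four connected pieces: uniform asymptotic coverage, convergence of the scaled half-length, a two-sided adaptivity bound at smooth $f$, and two one-sided efficiency bounds. Each piece combines the finite-sample results of \Cref{sec:general_results} with a uniform asymptotic expansion of the single-class modulus.

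For coverage of the CIs based on $\hat L_{h_+,h_-}$ (whether $\hat L_\delta$ or a local polynomial estimator), I would decompose $\hat L_{h_+,h_-}-Lf=\langle w,u\rangle+\bias_f(\hat L_{h_+,h_-})$. The bias is dominated by $\maxbias_{\mathcal{F}}(\hat L_{h_+,h_-})$, which is either subtracted in~\eqref{eq:rd-oci} or absorbed by $\cv_\alpha$ in~\eqref{eq:rd-flci}. Under \Cref{rd_xs_assump_main_text} and bandwidths of order $n^{-1/(2p+1)}$, the Lyapunov ratio $\max_i |w_+(x_i,h_+)+w_-(x_i,h_-)|^{2+\eta}/\sd(\hat L_{h_+,h_-})^{2+\eta}$ tends to zero, so \Cref{rd_errors_assump_main_text} and the Lyapunov CLT give asymptotic normality of the standardized estimator uniformly over $(f,Q)\in\mathcal{F}\times\mathcal{Q}_n$. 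Uniform consistency of $\hat\sigma$ from \Cref{estimated_tilde_sigma_assump_main_text} then makes the studentized statistic asymptotically $\mathcal{N}(\mu,1)$ with $|\mu|\le \maxbias_{\mathcal{F}}(\hat L_{h_+,h_-})/\sd(\hat L_{h_+,h_-})$, which is precisely what~\eqref{eq:rd-oci} and~\eqref{eq:rd-flci} are calibrated for.

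For the half-length and the efficiency bounds, the crucial input is the uniform expansion $\omega(\delta;\mathcal{F}_{RDT,p}(C))=n^{-r/2}A\delta^{r}(1+o(1))$ with $r=2p/(2p+1)$, supplied by \Cref{rd_uniform_modulus_convergence_lemma}. Substituting it into the formula
\[
\chi_\alpha(\hat L_{\delta_\chi,\mathcal{F}})=\cv_\alpha\!\left(\tfrac{\omega(\delta_\chi;\mathcal{F})}{2\sigma\omega'(\delta_\chi;\mathcal{F})}-\tfrac{\delta_\chi}{2\sigma}\right)\sigma\omega'(\delta_\chi;\mathcal{F})
\]
and rescaling $\delta=n^{-1/(2p+1)}\tilde\delta$ reveals that the optimizing $\delta_\chi$ is of this order and $n^{p/(2p+1)}\chi_\alpha$ converges to a deterministic $\chi_\infty$ depending only on $(\alpha,r,\sigma_\pm(0),p_{X,\pm}(0),C)$. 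Uniform consistency of $\hat\sigma$ plus continuity of the optimization in the rescaled variable upgrades this to convergence in probability of $n^{p/(2p+1)}\hat\chi$, uniformly over $\mathcal{F}\times\mathcal{Q}_n$. The same expansion makes the one-sided quantities $\omega(\delta_\beta;\mathcal{F})$ and $\omega(\delta_\beta;\mathcal{F})+\delta_\beta\omega'(\delta_\beta;\mathcal{F})$ scale as $n^{-r/2}$ times explicit functions of $r$.

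For the two-sided lower bound at $f\in\mathcal{F}_{RDT,p}(0)$, I would restrict to a normal $Q^{*}\in\mathcal{Q}_n$ (available by hypothesis), so that the finite-sample \Cref{th:two_sided_adaptation_to_function_thm} applies, and then invoke \Cref{th:centrosymmetric_adaptation_twosided}, valid because $\mathcal{F}_{RDT,p}(C)$ is centrosymmetric and $\tilde f-f\in\mathcal{F}$ for every $\tilde f\in\mathcal{F}$ and $f\in\mathcal{F}_{RDT,p}(0)$. This gives $E_{f,Q^{*}}\lambda(\mathcal{C})\ge(1-\alpha)E[\omega(2\sigma(z_{1-\alpha}-Z);\mathcal{F})\mid Z\le z_{1-\alpha}]$; plugging in the modulus expansion, the $n^{-r/2}$ and $A$ factors cancel against $2\chi_\infty$, leaving the numerator $(1-\alpha)2^{r}E[(z_{1-\alpha}-Z)^{r}\mid Z\le z_{1-\alpha}]$ and, after expressing $\chi_\infty$ via the rescaled minimization, the stated denominator $2r\inf_{\delta>0}\cv_\alpha((\delta/2)(1/r-1))\delta^{r-1}$. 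The one-sided claims are parallel: the minimax optimality bound follows from \Cref{th:one_sided_minimax_thm} applied on a normal subsequence, and the adaptivity ratio $2^{r}/(1+r)$ is precisely the limit of~\eqref{eq:minimax-onesidedCI-efficiency} under the modulus expansion, which collapses to $\omega(2\delta_\beta;\mathcal{F})/(\omega(\delta_\beta;\mathcal{F})+\delta_\beta\omega'(\delta_\beta;\mathcal{F}))\to 2^{r}/(1+r)$.

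The main obstacle is uniformity: the optimality theorems assume normal errors with known variance, but we need lower bounds uniform over $\mathcal{Q}_n$ and upper bounds robust to estimating $\sigma$. The normal-subsequence trick handles the lower bounds (any CI valid over all of $\mathcal{Q}_n$ must be valid on the normal member, where the finite-sample bound applies and then passes to the limit via the modulus expansion), while the Lyapunov CLT together with uniform consistency of $\hat\sigma$ handles coverage and the stochastic convergence of $\hat\chi$. A secondary point is showing that the feasible bandwidth-selection step (whose objective depends on $\hat\sigma$) returns bandwidths of order $n^{-1/(2p+1)}$ uniformly; this follows because, after the $n^{-1/(2p+1)}$ rescaling, the optimization is continuous in the plug-in nuisance parameters that have been shown to converge uniformly.
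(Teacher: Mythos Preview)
Your proposal is essentially correct and tracks the paper's approach: the paper states explicitly that ``the asymptotic efficiency bounds correspond to those in Section~\ref{sec:general_results} under~\eqref{eq:asymptotoc-modulus} with $r=2p/(2p+1)$,'' and defers the full argument to a supplemental appendix. Your decomposition into (i) a Lyapunov CLT plus \Cref{estimated_tilde_sigma_assump_main_text} for uniform coverage, (ii) the modulus expansion from \Cref{rd_uniform_modulus_convergence_lemma} for the half-length scaling, and (iii) the finite-sample bounds of \Cref{th:centrosymmetric_adaptation_twosided} and \Cref{th:centrosymmetric_adaptation_corollary} evaluated at a normal $Q^{*}\in\mathcal{Q}_n$ for the efficiency inequalities, is exactly the intended structure. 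Your calculation of the limiting ratio, with the constant $A$ cancelling and the denominator reducing to $2r\inf_{\delta>0}\cv_\alpha((\delta/2)(1/r-1))\delta^{r-1}$, is correct.

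One point deserves a bit more care than your phrase ``the finite-sample bound applies and then passes to the limit'' suggests. The finite-sample lower bounds in \Cref{th:two_sided_adaptation_to_function_thm} and \Cref{th:one_side_adapt_thm} are stated for confidence sets in $\mathcal{I}_\alpha$, i.e.\ with exact coverage at least $1-\alpha$. A sequence $\mathcal{C}_n$ with only asymptotic coverage need not belong to $\mathcal{I}_\alpha$ for any fixed $n$, so you cannot apply the finite-sample bound directly. The standard fix---and presumably what the supplemental appendix does---is to note that for any $\eta>0$ the coverage eventually exceeds $1-\alpha-\eta$, apply the finite-sample bound at level $1-\alpha-\eta$, use continuity of the modulus-based expressions in $\alpha$ (which follows from continuity of $z_{1-\alpha}$ and $\cv_\alpha$), and then let $\eta\downarrow 0$. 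This is routine but should be stated; without it the ``normal-subsequence trick'' is not quite a proof.
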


The proof of Theorem~\ref{rd_optimal_estimator_thm_main_text} is given in
\Cref{sec:sap:rd_asym}. The asymptotic efficiency bounds
correspond to those in \Cref{sec:general_results}
under~\eqref{eq:asymptotoc-modulus} with $r=2p/(2p+1)$.

\section{Unknown Error Distribution}%
\label{sec:sap:unknown_error}

The Gaussian regression model~\eqref{eq:fixed_design_eq} makes the assumption of
normal i.i.d.\ errors with a known variance conditional on the $x_i$'s, which is
often unrealistic. This section considers a model that relaxes these assumptions
on the error distribution:
\begin{align}\label{unknown_error_eq}
 y_i=f(x_i)+u_i,\,
\{u_i\}_{i=1}^n\sim Q,\,
f\in\mathcal{F},\,
Q\in\mathcal{Q}_n
\end{align}
where $\mathcal{Q}_n$ denotes the set of possible joint distributions of
$\{u_i\}_{i=1}^n$ and, as before, $\{x_i\}_{i=1}^n$ is deterministic and
$\mathcal{F}$ is a convex set. We derive feasible versions of the optimal CIs in
\Cref{sec:general_results} and show their asymptotic validity (uniformly over
$\mathcal{F},\mathcal{Q}_n$) and asymptotic efficiency. As we discuss below, our
results hold even in cases where the limiting form of the optimal estimator is
unknown or may not exist, and where currently available methods for showing
asymptotic efficiency, such as equivalence with Gaussian white noise, break
down.

Since the distribution of the data $\{y_i\}_{i=1}^n$ now depends on both $f$ and
$Q$, we now index probability statements by both of these quantities: $P_{f,Q}$
denotes the distribution under $(f,Q)$ and similarly for $E_{f,Q}$. The coverage
requirements and definitions of minimax performance criteria in
\Cref{sec:general_results} are the same, but with infima and suprema over
functions $f$ now taken over both functions $f$ and error distributions
$Q\in\mathcal{Q}_n$. We will also consider asymptotic results. We use the
notation $Z_n\underset{\mathcal{F},\mathcal{Q}_n}{\overset{d}{\to}} \mathcal{L}$
to mean that $Z_n$ converges in distribution to $\mathcal{L}$ uniformly over
$f\in\mathcal{F}$ and $Q\in\mathcal{Q}_n$, and similarly for
$\underset{\mathcal{F},\mathcal{Q}_n}{\overset{p}{\to}}$.

If the variance function is unknown, the estimator $\hat L_\delta$ is
infeasible. However, we can form an estimate based on an estimate of the
variance function, or based on some candidate variance function. For a candidate
variance function $\tilde\sigma^2(\cdot)$, let
$K_{\tilde\sigma(\cdot),n}f= (f(x_1)/\tilde\sigma(x_1), \ldots,\allowbreak
f(x_n)/\tilde\sigma(x_n))'$, and let $\omega_{\tilde\sigma(\cdot),n}(\delta)$
denote the modulus of continuity defined with this choice of $K$. Let
$\hat{L}_{\delta,\tilde\sigma(\cdot)}=\hat{L}_{\delta,\mathcal{F},\mathcal{G},\tilde\sigma(\cdot)}$
denote the estimator defined in (\ref{eq:Ldelta_eq}) with this choice of $K$ and
$Y=(y_1/\tilde\sigma(x_1),\ldots,y_n/\tilde\sigma(x_n))'$, and let
$f^*_{\tilde\sigma(\cdot),\delta}$ and $g^*_{\tilde\sigma(\cdot),\delta}$ denote
the least favorable functions used in forming this estimate. We assume
throughout this section that $\mathcal{G}\subseteq\mathcal{F}$. More generally,
we will consider affine estimators, which, in this setting, take the form
\begin{equation}\label{affine_est_eq}
\hat L=a_n+\sum_{i=1}^{n}w_{i,n} y_i
\end{equation}
where $a_n$ and $w_{i,n}$ are a sequence and triangular array respectively.
For now, we assume that $a_n$ and $w_{i,n}$ are nonrandom, (which, in the case of the estimator $\hat L_{\delta,\tilde\sigma(\cdot)}$, requires that $\tilde\sigma(\cdot)$ and $\delta$ be nonrandom).
We provide conditions that allow for random $a_n$ and $w_{i,n}$ after stating our result for nonrandom weights.
For a class $\mathcal{G}$, the maximum and minimum bias are
\begin{align*}
\maxbias_{\mathcal{G}}(\hat L)
&=\sup_{f\in\mathcal{G}} \Big[a_n+\sum_{i=1}^{n}w_{i,n}f(x_i)-Lf\Big],
&
\minbias_{\mathcal{G}}(\hat L)
&=\inf_{f\in\mathcal{G}} \Big[a_n+\sum_{i=1}^{n}w_{i,n}f(x_i)-Lf\Big].
\end{align*}
By the arguments used to derive the formula~\eqref{eq:maxbias-minbias}, we have
\begin{equation*}
  \overline{\text{bias}}_{\mathcal{F}}(\hat{L}_{\delta,\mathcal{F},\mathcal{G},
    \tilde{\sigma}(\cdot)})=-\underline{\text{bias}}_{\mathcal{G}}(
  \hat{L}_{\delta,\mathcal{F},\mathcal{G},\tilde \sigma(\cdot)})=\frac{1}{2}
  (\omega_{n,\tilde\sigma(\cdot)}(\delta;\mathcal{F},\mathcal{G})-
  \delta\omega_{n,\tilde\sigma(\cdot)}'(\delta;\mathcal{F},\mathcal{G})).
\end{equation*}
This holds regardless of whether $\tilde\sigma(\cdot)$ is equal to the actual
variance function of the $u_i$'s. In our results below, we allow for infeasible
estimators in which $a_n$ and $w_{i,n}$ depend on $Q$ (for example, when the
unknown variance $\sigma_Q(x_i)=\text{var}_Q(y_i)$ is used to compute the
optimal weights), so that $\maxbias_{\mathcal{G}}(\hat L)$ and
$\minbias_{\mathcal{G}}(\hat L)$ may depend on $Q$. We leave this implicit in
our notation.

Let $s_{n,Q}$ denote the (constant over $f$) standard deviation of $\hat L$ under $Q$ and suppose that the uniform central limit theorem
\begin{align}\label{epsilon_clt_eq}
  \frac{\sum_{i=1}^n w_{i,n}u_i}{s_{n,Q}}
  \underset{\mathcal{F},\mathcal{Q}_n}{\overset{d}{\to}} \mathcal{N}(0,1)
\end{align}
holds.  To form a feasible CI, we will require an estimate $\widehat{\text{se}}_n$ of $s_{n,Q}$ satisfying
\begin{align}\label{se_convergence_eq}
\frac{\widehat{\text{se}}_n}{s_{n,Q}}
  \underset{\mathcal{F},\mathcal{Q}_n}{\overset{p}{\to}} 1.
\end{align}
The following theorem shows that using $\widehat{\text{se}}_n$ to form analogues
of the CIs treated in \Cref{sec:general_results} gives asymptotically valid CIs.

\begin{theorem}\label{high_level_asymptotic_thm}
  Let $\hat L$ be an estimator of the form (\ref{affine_est_eq}), and suppose
  that (\ref{epsilon_clt_eq}) and (\ref{se_convergence_eq}) hold. Let $\hat c=\hat L
  -\overline{\text{bias}}_{\mathcal{F}}(\hat L)
  -\widehat{\text{se}}_{n}z_{1-\alpha}$, and let
  $b=\max\{|\overline{\text{bias}}_{\mathcal{F}}(\hat{L})|,|
  \underline{\text{bias}}_{\mathcal{F}}(\hat L)|\}$. Then
\begin{equation}\label{asym_one_sided_coverage_eq}
\liminf_{n\to\infty}\inf_{f\in\mathcal{F},Q\in\mathcal{Q}_n}
P_{f,Q}\left(Lf\in \hor{\hat c,\infty}\right)\ge 1-\alpha
\end{equation}
and
\begin{equation}\label{asym_two_sided_coverage_eq}
\liminf_{n\to\infty}\inf_{f\in\mathcal{F},Q\in\mathcal{Q}_n}
P_{f,Q}\left(Lf\in \left\{\hat L
\pm \widehat{\text{se}}_n\cv_{\alpha}\left(b/\widehat{\text{se}}_n\right)\right\}
\right)\ge 1-\alpha.
\end{equation}
The worst-case $\beta$th quantile excess length of the one-sided CI over
$\mathcal{G}$ will satisfy
\begin{equation}\label{one_side_asym_minimax_eq}
\limsup_{n\to\infty}\sup_{Q\in\mathcal{Q}_n} \frac{\sup_{g\in\mathcal{G}}q_{g,Q,\beta}(Lg-\hat c)}
 {\overline{\text{bias}}_{\mathcal{F}}(\hat L)-\underline{\text{bias}}_{\mathcal{G}}(\hat L)+s_{n,Q}(z_{1-\alpha}+z_{\beta})}
\le 1
\end{equation}
and the length of the two-sided CI will satisfy
\begin{equation*}
\frac{\cv_{\alpha}\left(b/\widehat{\text{se}}_n\right)\widehat{\text{se}}_n}
 {\cv_{\alpha}\left(b/s_{n,Q}\right)s_{n,Q}}
\underset{\mathcal{F},\mathcal{Q}_n}{\overset{p}{\to}} 1.
\end{equation*}
Suppose, in addition, that
$\hat L=\hat L_{\delta,\mathcal{F},\mathcal{G},\tilde\sigma(\cdot)}$ with
$\tilde\sigma(\cdot)=\sigma_Q(\cdot)$ where $\sigma_Q^2(x_i)=\text{var}_Q(u_i)$
and, for each $n$, there exists a $Q_n\in\mathcal{Q}_n$ such that
$\{u_i\}_{i=1}^n$ are independent and normal under $Q_n$. Then no one-sided CI
satisfying (\ref{asym_one_sided_coverage_eq}) can satisfy
(\ref{one_side_asym_minimax_eq}) with the constant $1$ replaced by a strictly
smaller constant on the right-hand side.

\end{theorem}
\begin{proof}
  Let $Z_n=\sum_{i=1}^{n}w_{i,n}u_i/\widehat{\text{se}}_n$, and let $Z$ denote a
  standard normal random variable. To show asymptotic coverage of the one-sided
  CI, note that
\begin{equation*}
  P_{f,Q}\left(Lf\in\hor{\hat{c},\infty}\right)
  =P_{f,Q}\left(\widehat{\text{se}}_{n}z_{1-\alpha}\ge \hat L-Lf
    -\overline{\text{bias}}_{\mathcal{F}}(\hat L)\right)
  \ge P_{f,Q}\left(z_{1-\alpha}
    \ge Z_{n}\right)
\end{equation*}
using the fact that
$\overline{\text{bias}}_{\mathcal{F}}(\hat{L})+\sum_{i=1}^{n} w_{i,n}u_i\ge
\hat{L}-Lf$ for all $f\in\mathcal{F}$ by the definition of
$\overline{\text{bias}}_{\mathcal{F}}$. The right-hand side converges to
$1-\alpha$ uniformly over $f\in\mathcal{F}$ and $Q\in\mathcal{Q}_n$ by
(\ref{epsilon_clt_eq}) and (\ref{se_convergence_eq}).  For the two-sided CI, first note that
\begin{equation*}
  \left|\frac{\cv_{\alpha}\left(b/\widehat{\text{se}}_n\right)\widehat{\text{se}}_n}
    {\cv_{\alpha}\left(b/s_{n,Q}\right)s_{n,Q}} -1\right|
=\left|\frac{\cv_{\alpha}\left(b/\widehat{\text{se}}_n\right)-\cv_{\alpha}\left(b/s_{n,Q}\right)
+\cv_{\alpha}\left(b/s_{n,Q}\right)(1-s_{n,Q}/\widehat{\text{se}}_n)}
    {\cv_{\alpha}\left(b/s_{n,Q}\right)(s_{n,Q}/\widehat{\text{se}}_n)}\right|
\end{equation*}
which converges to zero uniformly over $f\in\mathcal{F},Q\in\mathcal{Q}_n$ since
$\cv_\alpha(t)$ is bounded from below and uniformly continuous with
respect to $t$.  Thus, $\frac{\cv_{\alpha}\left(b/\widehat{\text{se}}_n\right)\widehat{\text{se}}_n}
 {\cv_{\alpha}\left(b/s_{n,Q}\right)s_{n,Q}}
\underset{\mathcal{F},\mathcal{Q}_n}{\overset{p}{\to}} 1$ as claimed.  To show coverage of the two-sided CI, note that
\begin{equation*}
  P_{f,Q}\left(Lf\in \left\{\hat L \pm
      \cv_{\alpha}\left(b/\widehat{\text{se}}_n\right)\widehat{\text{se}}_n\right\}
  \right)
  =P_{f,Q}\left(\left|\tilde Z_n+r\right| \le
    \cv_{\alpha}\left(b/s_{n,Q}\right)\cdot c_n
  \right)
\end{equation*}
where
$c_n=\frac{\cv_{\alpha}\left(b/\widehat{\text{se}}_n\right)\widehat{\text{se}}_n}{\cv_{\alpha}\left(b/s_{n,Q}\right)s_{n,Q}}$,
$\tilde Z_n=\sum_{i=1}^{n}w_{i,n}u_i/s_{n,Q}$ and
$r=\left(a_n+\sum_{i=1}^n w_{i,n}f(x_i)-Lf\right)/s_{n,Q}$.
By~\eqref{epsilon_clt_eq} and the fact that
$c_n \underset{\mathcal{F},\mathcal{Q}_n}{\overset{p}{\to}} 1$, this is equal to
$P_{f,Q}\left(\left|Z+r\right| \le \cv_{\alpha}\left(b/s_{n,Q}\right)\right)$
(where $Z\sim \mathcal{N}(0,1)$) plus a term that converges to zero uniformly
over $f,Q$ (this can be seen by using the fact that convergence in distribution
to a continuous distribution implies uniform convergence of the cdfs; see Lemma
2.11 in \citealt{van_der_vaart_asymptotic_1998}). Since $|r|\le b/s_{n,Q}$,
(\ref{asym_two_sided_coverage_eq}) follows.

To show~\eqref{one_side_asym_minimax_eq}, note that,
\begin{multline*}
  Lg-\hat c =Lg -a_n-\sum_{i=1}^n w_{i,n}g(x_i)-\widehat{\text{se}}_{n} Z_{n}
  +\overline{\text{bias}}_{\mathcal{F}}(\hat L)
  +\widehat{\text{se}}_{n}z_{1-\alpha} \\
  \le \maxbias_{\mathcal{F}}(\hat L) -\minbias_{\mathcal{G}}(\hat L)
  +\widehat{\text{se}}_n(z_{1-\alpha}-Z_n)
\end{multline*}
for any $g\in\mathcal{G}$.
Thus,
\begin{multline*}
  \frac{Lg-\hat c}{\maxbias_{\mathcal{F}}(\hat L)
    -\minbias_{\mathcal{G}}(\hat{L})+s_{n,Q}(z_{1-\alpha}+z_{\beta})} -1 \le
  \frac{\widehat{\text{se}}_n(z_{1-\alpha}-Z_n)-s_{n,Q}(z_{1-\alpha}+z_{\beta})}
  {\maxbias_{\mathcal{F}}(\hat L) -\minbias_{\mathcal{G}}(\hat L)+s_{n,Q}(z_{1-\alpha}+z_{\beta})} \\
  =\frac{(\widehat{\text{se}}_n/s_{n,Q})\cdot
    (z_{1-\alpha}-Z_n)-(z_{1-\alpha}+z_{\beta})}
  {[\maxbias_{\mathcal{F}}(\hat{L})
    -\minbias_{\mathcal{G}}(\hat{L})]/s_{n,Q}+(z_{1-\alpha}+z_{\beta})}.
\end{multline*}
The $\beta$ quantile of the above display converges to $0$ uniformly over $f\in\mathcal{F}$ and $Q\in\mathcal{Q}_n$,
which gives the result.

For the last statement, let $\hor{\tilde c,\infty}$ be a sequence of CIs with
asymptotic coverage $1-\alpha$. Let $Q_n$ be the distribution from the
conditions in the theorem, in which the $u_i$'s are independent and normal.
Then, by Theorem~\ref{th:one_side_adapt_thm},
\begin{equation*}
\sup_{g\in\mathcal{F}}q_{f,Q_n,\beta}(\tilde c-Lg)
\ge \omega_{\sigma_{Q_n}(\cdot),n}(\tilde \delta_n),
\end{equation*}
where $\tilde \delta_n=z_{1-\alpha_n}+z_\beta$ and $1-\alpha_n$ is the coverage
of $\hor{\tilde c,\infty}$ over $\mathcal{F},\mathcal{Q}_n$. When
$\hat L=\hat L_{\delta,\mathcal{F},\mathcal{G},\sigma_{Q}(\cdot)}$, the
denominator in (\ref{one_side_asym_minimax_eq}) for $Q=Q_n$ is equal to
$\omega_{\sigma_{Q_n}(\cdot),n}(z_{1-\alpha}+z_\beta)$, which gives
\begin{equation*}
\frac{\sup_{g\in\mathcal{G}}q_{g,Q_n,\beta}(\hat c-Lg)}
 {\maxbias_{\mathcal{F}}(\hat L) -\minbias_{\mathcal{G}}(\hat L)+s_{n,Q_n}(z_{1-\alpha}+z_{\beta})}
\ge \frac{\omega_{\sigma_{Q_n}(\cdot),n}(z_{1-\alpha_n}+z_\beta)}
           {\omega_{\sigma_{Q_n}(\cdot),n}(z_{1-\alpha}+z_\beta)}.
\end{equation*}
If $\alpha_n\le \alpha$, then
$z_{1-\alpha_n}+z_\beta\ge z_{1-\alpha}-z_\beta$ so that the above display is greater than one by monotonicity of the modulus.  If not, then by concavity,
$\omega_{\sigma_{Q_n}(\cdot),n}(z_{1-\alpha_n}+z_\beta)\ge [\omega_{\sigma_{Q_n}(\cdot),n}(z_{1-\alpha}+z_\beta)/(z_{1-\alpha}+z_\beta)]\cdot (z_{1-\alpha_n}+z_\beta)$,
so the above display is bounded from below by $(z_{1-\alpha_n}+z_\beta)/(z_{1-\alpha}+z_\beta)$, and the $\liminf$ of this is at least one by the coverage requirement.
\end{proof}

The efficiency bounds in Theorem~\ref{high_level_asymptotic_thm} use the
assumption that the class of possible distributions contains a normal law, as is
often done in the literature on efficiency in nonparametric settings \citep[see,
e.g.,][pp. 205--206]{fan93}. We leave the topic of relaxing this assumption for
future research.

Theorem~\ref{high_level_asymptotic_thm} requires that a known candidate variance function $\tilde\sigma(\cdot)$ and a known $\delta$ be used when forming CIs based on the estimate $\hat L_\delta$.  However, the theorem does not require that the candidate variance function be correct in order to get asymptotic coverage, so long as the standard error $\widehat{\text{se}}_n$ is consistent.  If it turns out that $\tilde\sigma(\cdot)$ is indeed the correct variance function, then it follows from the last part of the theorem that the resulting CI is efficient.  In the special case where $\mathcal{F}$ imposes a (otherwise unconstrained) linear model, this corresponds to the common practice of using ordinary least squares with heteroskedasticity robust standard errors.

In some cases, one will want to use a data dependent $\tilde\sigma(\cdot)$ and
$\delta$ in order to get efficient estimates with unknown variance. The
asymptotic coverage and efficiency of the resulting CI can then be derived by
showing equivalence with the infeasible estimator
$\hat L_{\delta^*,\mathcal{F},\mathcal{G},\sigma_Q(\cdot)}$, where $\delta^*$ is
chosen according to the desired performance criterion. The following theorem
gives conditions for this asymptotic equivalence. We verify them for our
regression discontinuity example in \Cref{sec:sap:rd_asym}.

\begin{theorem}\label{high_level_asym_equivalence_thm}
  Suppose that $\hat{L}$ and $\widehat{\text{se}}_n$
  satisfy~\eqref{epsilon_clt_eq} and~\eqref{se_convergence_eq}. Let $\tilde{L}$
  and $\widetilde{\text{se}}_n$ be another estimator and standard error, and let
  $\widetilde{\overline{\text{bias}}}_n$ and
  $\widetilde{\underline{\text{bias}}}_n$ be (possibly data dependent)
  worst-case bias formulas for $\tilde L$ under $\mathcal{F}$. Suppose that
\begin{align*}
  \frac{\hat L-\tilde L}{s_{n,Q}}& \underset{\mathcal{F},\mathcal{Q}_n}{\overset{p}{\to}} 0,
  &
    \frac{\overline{\text{bias}}_{\mathcal{F}}(\hat L)-\widetilde{\overline{\text{bias}}}_n}{s_{n,Q}}
  &    \underset{\mathcal{F},\mathcal{Q}_n}{\overset{p}{\to}} 0,
  & \frac{\underline{\text{bias}}_{\mathcal{F}}(\hat L)-\widetilde{\underline{\text{bias}}}_n}{s_{n,Q}}
  &    \underset{\mathcal{F},\mathcal{Q}_n}{\overset{p}{\to}} 0,
  &
    \frac{\widehat{\text{se}}_n}{\widetilde{\text{se}}_n}
  &  \underset{\mathcal{F},\mathcal{Q}_n}{\overset{p}{\to}} 1.
\end{align*}
Let $\tilde c=\tilde L
  -\widetilde{\overline{\text{bias}}}_n
  -\widetilde{\text{se}}_{n}z_{1-\alpha}$,
and let
  $\tilde b=\max\{|\widetilde{\overline{\text{bias}}}_n|,|\widetilde{\underline{\text{bias}}}_n|\}$.
Then (\ref{asym_one_sided_coverage_eq}) and (\ref{asym_two_sided_coverage_eq}) hold with $\hat c$ replaced by $\tilde c$, $\hat L$ replaced by $\tilde L$, $b$ replaced by $\tilde b$ and $\widehat{\text{se}}_n$ replaced by $\widetilde{\text{se}}_n$.  Furthermore, the performance of the CIs is asymptotically equivalent in the sense that
\begin{equation*}
\frac{\sup_{Q\in \mathcal{Q}_n}\sup_{g\in\mathcal{G}}q_{g,Q,\beta}(\tilde c-Lg)}{\sup_{Q\in \mathcal{Q}_n}\sup_{g\in\mathcal{G}}q_{g,Q,\beta}(\hat c-Lg)} \to 1
\text{ and }
\frac{\cv_\alpha(b/\widehat{\text{se}}_n)\widehat{\text{se}}_n}{\cv_\alpha(\tilde b/\widetilde{\text{se}}_n)\widetilde{\text{se}}_n}\underset{\mathcal{F},\mathcal{Q}_n}{\overset{p}{\to}} 1.
\end{equation*}
\end{theorem}
\begin{proof}
By the conditions of the theorem, we have, for some $c_n$ that converges in probability to zero uniformly over $\mathcal{F},\mathcal{Q}_n$,
\begin{multline*}
\tilde c-Lf
=\tilde L-Lf-\widetilde{\overline{\text{bias}}}_n-\widetilde{\text{se}}_{n}z_{1-\alpha}
=\hat L-Lf-\maxbias_{\mathcal{F}}(\hat L)-s_{n,Q}z_{1-\alpha}+c_n s_{n,Q}  \\
\le \sum_{i=1}^{n}w_{i,n}u_i-s_{n,Q}z_{1-\alpha}+c_n s_{n,Q}.
\end{multline*}
Thus,
\begin{equation*}
  P_{f,Q}\left(Lf\in \hor{\tilde c,\infty}\right)
  =P_{f,Q}\left(0 \ge \tilde c-Lf\right)
  \ge P_{f,Q}\left(0\ge \frac{\sum_{i=1}^{n}w_{i,n}u_i}{s_{n,Q}}
    -z_{1-\alpha}+c_n\right),
\end{equation*}
which converges to $1-\alpha$ uniformly over $\mathcal{F},\mathcal{Q}_n$. By
Theorem~\ref{high_level_asymptotic_thm},
$\sup_{g\in\mathcal{G}}q_{g,Q,\beta}(\hat c-Lg)$ is bounded from below by a
constant times $s_{n,Q}$. Thus,
$\left|\frac{\sup_{Q\in\mathcal{Q}_n}\sup_{g\in\mathcal{G}}
    q_{g,Q,\beta}(\tilde{c}-Lg)}{\sup_{Q\in\mathcal{Q}_n}
    \sup_{g\in\mathcal{G}}q_{g,Q,\beta}(\hat c-Lg)}- 1\right|$ is bounded from
above by a constant times
\begin{equation*}
  \sup_{Q\in\mathcal{Q}_n}\sup_{g\in\mathcal{G}}
  \left|\frac{q_{g,Q,\beta}(\tilde c-Lg)-q_{g,Q,\beta}(\hat c-Lg)}{s_{n,Q}}\right|
  =\sup_{Q\in\mathcal{Q}_n}\sup_{g\in\mathcal{G}}
  \left|q_{g,Q,\beta}(\tilde c/s_{n,Q})-q_{g,Q,\beta}(\hat c/s_{n,Q})\right|,
\end{equation*}
which converges to zero since $(\tilde c-\hat c)/s_{n,Q}\underset{\mathcal{F},\mathcal{Q}_n}{\overset{p}{\to}} 0$.

The claim that $\frac{\cv_\alpha(b/\widehat{\text{se}}_n)\widehat{\text{se}}_n}{\cv_\alpha(\tilde b/\widetilde{\text{se}}_n)\widetilde{\text{se}}_n}\underset{\mathcal{F},\mathcal{Q}_n}{\overset{p}{\to}} 1$ follows using similar arguments to the proof of Theorem~\ref{high_level_asymptotic_thm}.
To show coverage of the two-sided CI, note that
\begin{equation*}
P_{f,Q}\left(Lf\in \left\{\tilde L \pm
      \cv_{\alpha}\left(\tilde b/\widetilde{\text{se}}_n\right)\widetilde{\text{se}}_n\right\}\right)
=P_{f,Q}\left(\frac{|\tilde L-Lf|}{s_{n,Q}}\le \cv_{\alpha}\left(b/s_{n,Q}\right)\cdot c_n\right),
\end{equation*}
where
$c_n=\frac{\cv_{\alpha}(\tilde{b} /
  \widetilde{\text{se}}_n)\widetilde{\text{se}}_n}{\cv_{\alpha}(b/s_{n,Q})
  s_{n,Q}} \underset{\mathcal{F},\mathcal{Q}_n}{\overset{p}{\to}} 1$. Since
$\frac{|\tilde L-Lf|}{s_{n,Q}}=|V_n+r|$ where
$r=(a_n+\sum_{i=1}^n w_{i,n}f(x_i)-Lf)/s_{n,Q}$ and
$V_n=\sum_{i=1}^n w_{i,n}u_i/s_{n,Q}
+(\tilde{L}-\hat{L})/s_{n,Q}\underset{\mathcal{F},\mathcal{Q}_n}{\overset{d}{\to}}
\mathcal{N}(0,1)$, the result follows from arguments in the proof of
Theorem~\ref{high_level_asymptotic_thm}.
\end{proof}

The results above give high-level conditions that can be applied to a wide range
of estimators and CIs. We now introduce an estimator and standard error formula
that give asymptotic coverage for essentially arbitrary functionals $L$ under
generic low level conditions on $\mathcal{F}$ and the $x_i$'s. The estimator is
based on a nonrandom guess for the variance function and, if this guess is
correct up to scale (e.g.\ if the researcher correctly guesses that the errors
are homoskedastic), the one-sided CI based on this estimator will be
asymptotically optimal for some quantile of excess length.

Let $\tilde\sigma(\cdot)$ be some nonrandom guess for the variance function bounded away from $0$ and $\infty$, and let $\delta>0$ be a deterministic constant specified by the researcher.  Let $\hat f$ be an estimator of $f$.  The variance of
$\hat L_{\delta,\tilde\sigma(\cdot)}$ under some $Q\in\mathcal{Q}_n$ is equal to
\begin{equation*}
\text{var}_Q(\hat L_{\delta,\tilde\sigma(\cdot),n})
=\left(\frac{\omega_{\tilde\sigma(\cdot),n}'(\delta)}{\delta}\right)^2\sum_{i=1}^n \frac{(g^*_{\tilde\sigma(\cdot),\delta}(x_i)-f^*_{\tilde\sigma(\cdot),\delta}(x_i))^2\sigma_Q^2(x_i)}{\tilde\sigma^4(x_i)}.
\end{equation*}
We consider the estimate
\begin{equation*}
\widehat{\text{se}}^2_{\delta,\tilde\sigma(\cdot),n}
=\left(\frac{\omega_{\tilde\sigma(\cdot),n}'(\delta)}{\delta}\right)^2\sum_{i=1}^n \frac{(g^*_{\tilde\sigma(\cdot),\delta}(x_i)-f^*_{\tilde\sigma(\cdot),\delta}(x_i))^2
( y_i-\hat f(x_i))^2}{\tilde\sigma^4(x_i)}.
\end{equation*}

Suppose that $f:\mathcal{X}\to\mathbb{R}$ where $\mathcal{X}$ is a metric space with metric $d_X$ such that
the functions $f^*_{\tilde\sigma(\cdot),\delta}$ and $g^*_{\tilde\sigma(\cdot),\delta}$ satisfy the uniform continuity condition
\begin{equation}\label{f_uniform_continuity_condition}
\sup_{n} \sup_{x,x'\colon d_X(x,x')\le \eta}
\max\left\{ \left|f^*_{\tilde\sigma(\cdot),\delta}(x)-f^*_{\tilde\sigma(\cdot),\delta}(x')\right|,
\left|g^*_{\tilde\sigma(\cdot),\delta}(x)-g^*_{\tilde\sigma(\cdot),\delta}(x')\right|\right\}
\le \overline g(\eta),
\end{equation}
where $\lim_{\eta\to 0} \overline g(\eta)=0$ and, for all $\eta>0$,
\begin{equation}\label{x_sequence_condition}
\min_{1\le i\le n} \sum_{j=1}^n I\left(d_X(x_j,x_i)\le \eta\right)\to\infty.
\end{equation}
We also assume that the estimator $\hat f$ used to form the variance estimate
satisfies the uniform convergence condition
\begin{equation}\label{fhat_uniform_convergence_condition}
\max_{1\le i\le n} |\hat f(x_i)-f(x_i)|
  \underset{\mathcal{F},\mathcal{Q}_n}{\overset{p}{\to}} 0.
\end{equation}
Finally, we impose conditions on the moments of the error distribution. Suppose
that there exist $K$ and $\eta>0$ such that, for all $n$, $Q\in\mathcal{Q}_n$,
the errors $\{u_i\}_{i=1}^n$ are independent with, for each $i$,
\begin{equation}\label{epsilon_clt_condition}
1/K\le \sigma_Q^2(x_i)\le K
\text{ and }
E_{Q}|u_i|^{2+\eta}\le K.
\end{equation}

In cases where function class $\mathcal{F}$ imposes smoothness on $f$,
(\ref{f_uniform_continuity_condition}) will often follow directly from the
definition of $\mathcal{F}$. For example, it holds for the Lipschitz class
$\{f\colon |f(x)-f(x')|\le C d_X(x,x')\}$. The condition
(\ref{x_sequence_condition}) will hold with probability one if the $x_i$'s are
sampled from a distribution with density bounded away from zero on a
sufficiently regular bounded support. The condition
(\ref{fhat_uniform_convergence_condition}) will hold under regularity conditions
for a variety of choices of $\hat f$. It is worth noting that smoothness
assumptions on $\mathcal{F}$ needed for this assumption are typically weaker
than those needed for asymptotic equivalence with Gaussian white noise. For
example, if $\mathcal{X}=\mathbb{R}^{k}$ with the Euclidean norm,
(\ref{f_uniform_continuity_condition}) will hold automatically for Hölder
classes with exponent less than or equal to 1, while equivalence with Gaussian
white noise requires that the exponent be greater than $k/2$
\citep[see][]{brown_asymptotic_1998}. Furthermore, we do not require any
explicit characterization of the limiting form of the optimal CI\@. In
particular, we do not require that the weights for the optimal estimator
converge to a limiting optimal kernel or efficient influence function.

The condition (\ref{epsilon_clt_condition}) is used to verify a Lindeberg
condition for the central limit theorem used to obtain (\ref{epsilon_clt_eq}),
which we do in the next lemma.
\begin{lemma}\label{lindeberg_clt_lemma}
  Let $Z_{n,i}$ be a triangular array of independent random variables and let
  $a_{n,j}$, $1\le j\le n$ be a triangular array of constants. Suppose that
  there exist constants $K$ and $\eta>0$ such that, for all $i$,
\begin{align*}
1/K\le \sigma_{n,i}^2 \le K
\text{ and }
E |Z_{n,i}|^{2+\eta} \le K
\end{align*}
where $\sigma_{n,i}^2=EZ_{n,i}^2$, and that
\begin{align*}
\lim_{n\to\infty}\frac{\max_{1\le j\le n} a_{n,j}^2}{\sum_{j=1}^n a_{n,j}^2}=0.
\end{align*}
Then
\begin{align*}
  \frac{\sum_{i=1}^n a_{n,i}Z_{n,i}}{\sqrt{\sum_{i=1}^{n}a_{n,i}^2\sigma_{n,i}^2}}
  \stackrel{d}{\to} \mathcal{N}(0,1).
\end{align*}
\end{lemma}
\begin{proof}
We verify the conditions of the Lindeberg-Feller theorem as stated on p. 116 in
\citet{durrett_probability:_1996}, with $X_{n,i}=a_{n,i}Z_{n,i}/\sqrt{\sum_{j=1}^n a_{n,j}^2\sigma_j^2}$.  To verify the Lindeberg condition, note that
\begin{align*}
&\sum_{i=1}^n E\left(|X_{n,m}|^2\1{|X_{n,m}|>\varepsilon}\right)
=\frac{\sum_{i=1}^n E\left[|a_{n,i}Z_{n,i}|^2I\left(|a_{n,i}Z_{n,i}|>\varepsilon \sqrt{\sum_{j=1}^n a_{n,j}^2\sigma_j^2}\right)\right]}
  {\sum_{i=1}^n a_{n,i}^2\sigma_{n,i}^2}  \\
&\le \frac{\sum_{i=1}^n E\left(|a_{n,i}Z_{n,i}|^{2+\eta}\right)}
  {\varepsilon^\eta \left(\sum_{i=1}^n a_{n,i}^2\sigma_{n,i}^2\right)^{1+\eta/2}}
\le \frac{K^{2+\eta/2}}{\varepsilon^\eta}\frac{\sum_{i=1}^n |a_{n,i}|^{2+\eta}}
  {\left(\sum_{i=1}^n a_{n,i}^2\right)^{1+\eta/2}}
\le \frac{K^{2+\eta/2}}{\varepsilon^\eta}
\left(\frac{\max_{1\le i\le n} a_{n,i}^2}
  {\sum_{i=1}^n a_{n,i}^2}\right)^{1+\eta/2}.
\end{align*}
This converges to zero under the conditions of the lemma.
\end{proof}

\begin{theorem}\label{generic_estimator_thm}
  Let $\hat L_{\delta,\tilde\sigma(\cdot)}$ and
  $\widehat{\text{se}}^2_{\delta,\tilde\sigma(\cdot),n}$ be defined above.
  Suppose that, for each $n$, $f^*_{\tilde\sigma(\cdot),\delta}$,
  $g^*_{\tilde\sigma(\cdot),\delta}$ achieve the modulus under
  $\tilde\sigma(\cdot)$ with
  $\|K_{\tilde\sigma(\cdot),n}(g^*_{\tilde\sigma(\cdot),\delta}-f^*_{\tilde\sigma(\cdot),\delta})\|=\delta$,
  and that (\ref{f_uniform_continuity_condition}) and
  (\ref{x_sequence_condition}) hold. Suppose the errors
  satisfy~\eqref{epsilon_clt_condition} and are independent over $i$ for all $n$
  and $Q\in\mathcal{Q}_n$. Then~\eqref{epsilon_clt_eq} holds. If, in addition,
  the estimator $\hat f$ satisfies (\ref{fhat_uniform_convergence_condition}),
  then~\eqref{se_convergence_eq} holds with $\widehat{\text{se}}_n$ given by
  $\widehat{\text{se}}_{\delta,\tilde\sigma(\cdot),n}$.
\end{theorem}
\begin{proof}
  Condition~\eqref{epsilon_clt_eq} will follow by applying
  Lemma~\ref{lindeberg_clt_lemma} to show convergence under arbitrary sequences
  $Q_n\in\mathcal{Q}_n$ so long as
\begin{equation*}
  \frac{\max_{1\le i\le n}
    (g^*_{\tilde\sigma(\cdot),\delta}(x_i)-f^*_{\tilde\sigma(\cdot),\delta}(x_i))^2/\tilde\sigma(x_i)^4}
  {\sum_{i=1}^n(f^*_{\tilde\sigma(\cdot),\delta}(x_i)
    -g^*_{\tilde\sigma(\cdot),\delta}(x_i))^2/\tilde\sigma(x_i)^4}
  \to 0.
\end{equation*}
Since the denominator is bounded from below by $\delta^2/\max_{1\le i\le
  n}\tilde\sigma^2(x_i)$, and $\tilde\sigma^2(x_i)$ is bounded away from $0$ and
$\infty$ over $i$, it suffices to show that $\max_{1\le i\le n}
(g^*_{\tilde\sigma(\cdot),\delta}(x_i)-f^*_{\tilde\sigma(\cdot),\delta}(x_i))^2\to
0$. To this end, suppose, to the contrary, that there exists some $c>0$ such
that $\max_{1\le i\le n}
(g^*_{\tilde\sigma(\cdot),\delta}(x_i)\allowbreak-f^*_{\tilde\sigma(\cdot),\delta}(x_i))^2>c^2$
infinitely often. Let $\eta$ be small enough so that $\overline g(\eta)\le c/4$.
Then, for $n$ such that this holds and $k_n$ achieving this maximum,
\begin{equation*}
\sum_{i=1}^n(g^*_{\tilde\sigma(\cdot),\delta}(x_i)-f^*_{\tilde\sigma(\cdot),\delta}(x_i))^2
\ge \sum_{i=1}^n (c-c/2)^2\1{d_X(x_i,x_{k_n})\le \eta}
\to \infty.
\end{equation*}
But this is a contradiction since
$\sum_{i=1}^n(g^*_{\tilde\sigma(\cdot),\delta}(x_i)-f^*_{\tilde\sigma(\cdot),\delta}(x_i))^2$
is bounded by a constant times
$\sum_{i=1}^n(g^*_{\tilde\sigma(\cdot),\delta}(x_i)-f^*_{\tilde\sigma(\cdot),\delta}(x_i))^2/\tilde\sigma^2(x_i)=\delta^2$.

To show convergence of
$\widehat{\text{se}}^2_{\delta,\tilde\sigma(\cdot),n}/\text{var}_Q(\hat L_{\delta,\tilde\sigma(\cdot)})$, note that
\begin{equation*}
\frac{\widehat{\text{se}}^2_{\delta,\tilde\sigma(\cdot),n}}{\text{var}_Q(\hat L_{\delta,\tilde\sigma(\cdot)})}-1
=\frac{\sum_{i=1}^n a_{n,i}\left[( y_i-\hat f(x_i))^2-\sigma_Q^2(x_i)\right]}
        {\sum_{i=1}^n a_{n,i}\sigma^2_Q(x_i)}
\end{equation*}
where $a_{n,i}=\frac{(g^*_{\tilde\sigma(\cdot),\delta}(x_i)-f^*_{\tilde\sigma(\cdot),\delta}(x_i))^2}{\tilde\sigma^4(x_i)}$.  Since the denominator is bounded from below by a constant times $\sum_{i=1}^n a_{n,i}\tilde\sigma^2(x_i)=\delta^2$, it suffices to show that the numerator, which can be written as
\begin{align*}
\sum_{i=1}^n a_{n,i}\left[u_i^2-\sigma_Q(x_i)^2\right]
+\sum_{i=1}^n a_{n,i}(f(x_i)-\hat f(x_i))^2
+2\sum_{i=1}^n a_{n,i} u_i(f(x_i)-\hat f(x_i)),
\end{align*}
converges in probability to zero uniformly over $f$ and $Q$. The second term is
bounded by a constant times $\max_{1\le i\le
  n}(f(x_i)-\hat{f}(x_i))^2\sum_{i=1}^{n}a_{n,i}\tilde\sigma^2(x_i)=\max_{1\le
  i\le n}(f(x_i)-\hat f(x_i))^2\delta^2$, which converges in probability to zero
uniformly over $f$ and $Q$ by assumption. Similarly, the last term is bounded by
$\max_{1\le i\le n}|f(x_i)-\hat f(x_i)|$ times $2\sum_{i=1}^n a_{n,i}|u_i|$, and
the expectation of the latter term is bounded uniformly over $\mathcal{F}$ and
$\mathcal{Q}$. Thus, the last term converges in probability to zero uniformly
over $f$ and $Q$ as well. For the first term in this display, an inequality of
\citet{von_bahr_inequalities_1965} shows that the expectation of the absolute
$1+\eta/2$ moment of this term is bounded by a constant times
\begin{align*}
  \sum_{i=1}^n
  a_{n,i}^{1+\eta/2}E_{Q}\left|u_i^2-\sigma_Q(x_i)^2\right|^{1+\eta/2} \le
  \left(\max_{1\le i\le n} a_{n,i}^{\eta/2}\right) \max_{1\le i\le
    n}E_Q\left|\varepsilon_i^2-\sigma_Q^2(x_i)\right|^{1+\eta/2}
  \sum_{i=1}^{n}a_{n,i},
\end{align*}
which converges to zero since $\max_{1\le i\le n} a_{n,i}\to 0$ as shown earlier
in the proof and $\sum_{i=1}^{n} a_{n,i}$ is bounded by a constant times
$\sum_{i=1}^{n} a_{n,i}\tilde\sigma^2(x_i)=\delta^2$.
\end{proof}

If the variance function used by the researcher is correct up to scale (for
example, if the variance function is known to be constant), the one-sided
confidence intervals in (\ref{generic_estimator_thm}) will be asymptotically
optimal for some level $\beta$, which depends on $\delta$ and the magnitude of
the true error variance relative to the one used by the researcher. We record
this as a corollary.

\begin{corollary}\label{generic_optimality_corollary}
  If, in addition to the conditions in Theorem~\ref{generic_estimator_thm},
  $\sigma_Q^2(x)=\sigma^2\cdot\tilde\sigma^2(x)$ for all $n$ and
  $Q\in\mathcal{Q}_n$, then, letting $\beta=\Phi(\delta/\sigma-z_{1-\alpha})$,
  no CI satisfying~\eqref{asym_one_sided_coverage_eq} can
  satisfy~\ref{one_side_asym_minimax_eq} with the constant $1$ replaced by a
  strictly smaller constant on the right-hand side.
\end{corollary}

\section{Asymptotics for the Modulus and Efficiency Bounds}%
\label{sec:sap:asym_efficiency_bounds}

As discussed in \Cref{sec:general_results}, asymptotic relative efficiency
comparisons can often be performed by calculating the limit of the scaled
modulus. Here, we state some lemmas that can be used to obtain asymptotic
efficiency bounds and limiting behavior of the value of $\delta$ that optimizes
a particular performance criterion. We use these results in the proof of
Theorem~\ref{rd_optimal_estimator_thm_main_text} in \Cref{sec:sap:rd_asym}.

Before stating these results, we recall the characterization of minimax affine
performance given in \citet{donoho94}. To describe the results, first consider
the normal model $Z\sim \mathcal{N}(\mu,1)$ where $\mu\in[-\tau,\tau]$. The
minimax affine mean squared error for this problem is
\begin{align*}
  \rho_A(\tau)=\min_{\text{$\delta(Y)$ affine}}\max_{\mu\in[-\tau,\tau]}
  E_\mu(\delta(Y)-\mu)^2.
\end{align*}
The solution is achieved by shrinking $Y$ toward $0$, namely
$\delta(Y)=c_{\rho}(\tau)Y$, with $c_{\rho}(\tau)=\tau^2/(1+\tau^2)$, which
gives $\rho_A(\tau)=\tau^2/(1+\tau^2)$. The length of the smallest fixed-length
affine $100\cdot (1-\alpha)\%$ confidence interval is
\begin{align*}
  \chi_{A,\alpha}(\tau)=\min \left\{\chi \colon \text{there exists $\delta(Y)$
      affine s.t. }
    \inf_{\mu\in[-\tau,\tau]}P_\mu(|\delta(Y)-\mu|\le \chi)\ge
    1-\alpha\right\}.
\end{align*}
The solution is achieved at some $\delta(Y)=c_{\chi}(\tau)Y$, and it is
characterized in \citet{drees99}.

Using these definitions, the minimax
affine root MSE is given by
\begin{equation*}
\sup_{\delta>0}\frac{\omega(\delta)}{\delta}
\sqrt{\rho_A\left(\frac{\delta}{2\sigma}\right)}\sigma,
\end{equation*}
and the MSE optimal estimate is given by $\hat L_{\delta,\chi}$ where $\chi$ maximizes the above display.
Similarly, the optimal fixed-length affine CI has half-length
\begin{equation*}
\sup_{\delta>0}\frac{\omega(\delta)}{\delta}
\chi_{A,\alpha}\left(\frac{\delta}{2\sigma}\right)\sigma,
\end{equation*}
and is centered at $\hat L_{\delta_\chi}$ where $\delta_\chi$ maximizes the
above display (it follows from our results and those of \citealt{donoho94} that
this leads to the same value of $\delta_\chi$ as the one obtained by minimizing
CI length as described in \Cref{sec:two-sided-cis_main}).

The results below give the limiting behavior of these quantities
as well as the bound on expected length in
Corollary~\ref{th:centrosymmetric_adaptation_twosided} under pointwise
convergence of a sequence of functions $\omega_n(\delta)$ that satisfy the
conditions of a modulus scaled by a sequence of constants.

\begin{lemma}\label{modulus_convergence_lemma}
  Let $\omega_n(\delta)$ be a sequence of concave nondecreasing nonnegative
  functions on $\hor{0,\infty}$ and let $\omega_\infty(\delta)$ be a concave
  nondecreasing function on $\hor{0,\infty}$ with range $\hor{0,\infty}$. Then
  the following are equivalent.
\begin{itemize}
\item[(i)] For all $\delta> 0$, $\lim_{n\to\infty} \omega_n(\delta)=\omega_\infty(\delta)$.

\item[(ii)] For all $b\in (0,\infty)$, $b$ is in the range of $\omega_n$ for large enough $n$, and $\lim_{n\to\infty} \omega_n^{-1}(b)=\omega_\infty^{-1}(b)$.

\item[(iii)] For any $\overline \delta>0$, $\lim_{n\to\infty}\sup_{\delta\in[0,\overline \delta]}|\omega_n(\delta)-\omega_\infty(\delta)|=0$.
\end{itemize}
\end{lemma}
\begin{proof}
  Clearly $(iii)\Longrightarrow (i)$. To show $(i)\Longrightarrow (iii)$, given
  $\varepsilon>0$, let $0<\delta_1<\delta_2<\dotsb <\delta_k=\overline\delta$ be
  such that $\omega(\delta_j)-\omega(\delta_{j-1})\le \varepsilon$ for each $j$.
  Then, using monotonicity of $\omega_n$ and $\omega_\infty$, we have
  $\sup_{\delta\in[0,\delta_1]}|\omega_n(\delta)-\omega_\infty(\delta)| \le
  \max\left\{|\omega_n(\delta_1)|,|\omega_n(0)-\omega_\infty(\delta_1)|\right\}
  \to \omega_\infty(\delta_1)$ and
\begin{align*}
&\sup_{\delta\in[\delta_{j-1},\delta_j]}|\omega_n(\delta)-\omega_\infty(\delta)|
\le \max\left\{|\omega_n(\delta_j)-\omega_\infty(\delta_{j-1})|,
  |\omega_n(\delta_{j-1})-\omega_\infty(\delta_{j})|\right\}  \\
&\to |\omega_\infty(\delta_{j-1})-\omega_\infty(\delta_{j})|\le \varepsilon.
\end{align*}
The result follows since $\varepsilon$ can be chosen arbitrarily small. To show
$(i)\Longrightarrow (ii)$, let $\delta_\ell$ and $\delta_u$ be such that
$\omega_\infty(\delta_\ell)<b<\omega_\infty(\delta_u)$. For large enough $n$, we
will have $\omega_n(\delta_\ell)<b<\omega_n(\delta_u)$ so that $b$ will be in
the range of $\omega_n$ and $\delta_\ell<\omega_n^{-1}(b)<\delta_u$. Since
$\omega_\infty$ is strictly increasing, $\delta_\ell$ and $\delta_u$ can be
chosen arbitrarily close to $\omega_\infty^{-1}(b)$, which gives the result. To
show $(ii)\Longrightarrow (i)$, let $b_\ell$ and $b_u$ be such that
$\omega_\infty^{-1}(b_\ell)<\delta<\omega_\infty^{-1}(b_u)$. Then, for large
enough $n$, $\omega_n^{-1}(b_\ell)<\delta<\omega_n^{-1}(b_u)$, so that
$b_\ell<\omega_n(\delta)<b_u$, and the result follows since $b_\ell$ and $b_u$
can be chosen arbitrarily close to $\omega_\infty(\delta)$ since
$\omega_\infty^{-1}$ is strictly increasing.
\end{proof}

\begin{lemma}\label{modulus_objective_convergence_lemma}
Suppose that the conditions of Lemma~\ref{modulus_convergence_lemma} hold with
$\lim_{\delta\to 0}\omega_\infty(\delta)=0$ and
$\lim_{\delta\to\infty}\omega_\infty(\delta)/\delta=0$.
Let $r$ be a nonnegative function with $0\le r(\delta/2)\le \overline r \min\{\delta,1\}$ for some $\overline r<\infty$.  Then
\begin{align*}
\lim_{n\to\infty}\sup_{\delta>0}\frac{\omega_n(\delta)}{\delta}r\left(\frac{\delta}{2}\right)
=\sup_{\delta>0}\frac{\omega_\infty(\delta)}{\delta}r\left(\frac{\delta}{2}\right).
\end{align*}
If, in addition $r$ is continuous, $\frac{\omega_\infty(\delta)}{\delta}r\left(\frac{\delta}{2}\right)$ has a unique maximizer $\delta^*$, and, for each $n$, $\delta_n$ maximizes $\frac{\omega_n(\delta)}{\delta}r\left(\frac{\delta}{2}\right)$, then $\delta_n\to\delta^*$ and $\omega_n(\delta_n)\to\omega_\infty(\delta^*)$.
In addition, for any $\sigma>0$ and $0<\alpha<1$ and $Z$ a standard normal variable,
\begin{align*}
\lim_{n\to \infty} (1-\alpha)E[\omega_n(2\sigma (z_{1-\alpha}-Z))|Z\le z_{1-\alpha}]
=(1-\alpha)E[\omega_\infty(2\sigma (z_{1-\alpha}-Z))|Z\le z_{1-\alpha}].
\end{align*}
\end{lemma}
\begin{proof}
We will show that the objective can be made arbitrarily small for $\delta$ outside of $[\underline \delta,\overline \delta]$ for $\underline\delta$ small enough and $\overline\delta$ large enough, and then use uniform convergence over $[\underline \delta,\overline \delta]$.  First, note that, if we choose $\underline\delta<1$, then, for $\delta\le \underline\delta$,
\begin{align*}
\frac{\omega_n(\delta)}{\delta}r\left(\frac{\delta}{2}\right)
\le \omega_n(\delta)\overline r
\le \omega_n(\underline\delta)\overline r\to \omega_{\infty}(\underline\delta),
\end{align*}
which can be made arbitrarily small by making $\underline\delta$ small.
Since $\omega_n(\delta)$ is concave and nonnegative, $\omega_n(\delta)/\delta$ is nonincreasing, so, for $\delta>\overline\delta$,
\begin{align*}
\frac{\omega_n(\delta)}{\delta}r\left(\frac{\delta}{2}\right)
\le \frac{\omega_n(\delta)}{\delta}\overline r
\le \frac{\omega_n(\overline\delta)}{\overline\delta}\overline r
\to \frac{\omega_\infty(\overline\delta)}{\overline\delta}\overline r,
\end{align*}
which can be made arbitrarily small by making $\overline\delta$ large. Applying
Lemma~\ref{modulus_convergence_lemma} to show convergence over
$[\underline\delta,\overline\delta]$ gives the first claim. The second claim
follows since $\underline\delta$ and $\overline\delta$ can be chosen so that
$\delta_n\in[\underline\delta,\overline\delta]$ for large enough $n$ (the
assumption that
$\frac{\omega_\infty(\delta)}{\delta}r\left(\frac{\delta}{2}\right)$ has a
unique maximizer means that it is not identically zero), and uniform convergence
to a continuous function with a unique maximizer on a compact set implies
convergence of the sequence of maximizers to the maximizer of the limiting
function.

For the last statement, note that, by positivity and concavity of $\omega_n$, we have, for large enough $n$,
$0\le \omega_n(\delta)\le \omega_n(1)\max\{\delta,1\}\le (\omega_\infty(1)+1)\max\{\delta,1\}$ for all $\delta>0$.
The result then follows from the dominated convergence theorem.
\end{proof}

\begin{lemma}\label{modulus_derivative_convergence_lemma}
  Let $\omega_n(\delta)$ be a sequence of nonnegative concave functions on
  $\hor{0,\infty}$ and let $\omega_\infty(\delta)$ be a nonnegative concave
  differentiable function on $\hor{0,\infty}$. Let $\delta_0>0$ and suppose that
  $\omega_n(\delta)\to\omega_\infty(\delta)$ for all $\delta$ in a neighborhood
  of $\delta_0$. Then, for any sequence $d_n\in \partial \omega_n(\delta_0)$, we
  have $d_n\to \omega_\infty'(\delta_0)$. In particular, if
  $\omega_n(\delta)\to\omega_\infty(\delta)$ in a neighborhood of $\delta_0$ and
  $2\delta_0$, then
%
$\frac{\omega_n(2\delta_0)}{\omega_n(\delta_0)+\delta_0 \omega'_n(\delta_0)}
\to \frac{\omega_\infty(2\delta_0)}{\omega_\infty(\delta_0)+\delta_0 \omega'_\infty(\delta_0)}$.
%
\end{lemma}
\begin{proof}
By concavity, for $\eta>0$ we have
$[\omega_n(\delta_0)-\omega_n(\delta_0-\eta)]/\eta
\ge d_n\ge [\omega_n(\delta_0+\eta)-\omega_n(\delta_0)]/\eta$.  For small enough $\eta$, the left and right-hand sides converge, so that
$[\omega_\infty(\delta_0)-\omega_\infty(\delta_0-\eta)]/\eta
\ge \limsup_n d_n\ge \liminf_n d_n\ge [\omega_\infty(\delta_0+\eta)-\omega_\infty(\delta_0)]/\eta$.  Taking the limit as $\eta\to 0$ gives the result.
\end{proof}

\section{Asymptotics for Regression Discontinuity}%
\label{sec:sap:rd_asym}

This section proves Theorem~\ref{rd_optimal_estimator_thm_main_text}. We first
give a general result for linear estimators under high-level conditions in
\Cref{sec:sap:rd_asym_general_results}. We then consider local polynomial
estimators in \Cref{sec:sap:rd_local_polynomial} and optimal estimators with a
plug-in variance estimate in \Cref{sec:sap:rd_opt_est}.
Theorem~\ref{rd_optimal_estimator_thm_main_text} follows immediately from the
results in these sections.

Throughout this section, we consider the RD setup where the error distribution
may be non-normal as in \Cref{sec:asympt-valid-optim}, using the conditions in
that section. We repeat these conditions here for convenience.

\begin{assumption}\label{rd_xs_assump}
  For some $p_{X,+}(0)>0$ and $p_{X,-}(0)>0$, the sequence $\{x_i\}_{i=1}^{n}$
  satisfies
  $\frac{1}{n h_n}\sum_{i=1}^{n}m(x_i/h_n)\1{x_i>0} \to p_{X,+}(0)\int_0^\infty
  m(u)\, du$ and
  $\frac{1}{n h_n}\sum_{i=1}^{n}m(x_i/h_n)\1{x_{i} < 0} \to
  p_{X,-}(0)\int_{-\infty}^0 m(u)\, du$ for any bounded function $m$ with
  bounded support and any $h_n$ with
  $0<\liminf_n h_n n^{1/(2p+1)}\le \limsup_n h_n n^{1/(2p+1)}<\infty$.
\end{assumption}

\begin{assumption}\label{rd_errors_assump}
For some $\sigma(x)$ with $\lim_{x\downarrow 0}\sigma(x)=\sigma_+(0)>0$
and $\lim_{x\uparrow 0}\sigma(x)=\sigma_-(0)>0$, we have
\begin{itemize}
\item[(i)] the $u_i$s are independent under any $Q\in\mathcal{Q}_n$ with $E_Q u_i=0$,
$\text{var}_Q(u_i)=\sigma^2(x_i)$

\item[(ii)] for some $\eta>0$, $E_Q|u_i|^{2+\eta}$ is bounded uniformly over $n$ and $Q\in\mathcal{Q}_n$.
\end{itemize}
\end{assumption}

Theorem~\ref{rd_optimal_estimator_thm_main_text} considers affine estimators
that are optimal under the assumption that the variance function is given by
$\hat\sigma_+\1{x>0}+\hat\sigma_-\1{x<0}$, which covers the plug-in optimal
affine estimators used in our application. Here, it will be convenient to
generalize this slightly by considering the class of affine estimators that are
optimal under a variance function $\tilde\sigma(x)$, which may be misspecified
or data-dependent, but which may take some other form.
%
We consider two possibilities for how $\tilde\sigma(\cdot)$ is calibrated.

\begin{assumption}\label{estimated_tilde_sigma_assump}
$\tilde\sigma(x)=\hat\sigma_+\1{x>0}+\hat\sigma_- \1{x<0}$
where
$\hat\sigma_+\underset{\mathcal{F},\mathcal{Q}_n}{\overset{p}{\to}} \tilde\sigma_+(0)>0$
and
$\hat\sigma_-\underset{\mathcal{F},\mathcal{Q}_n}{\overset{p}{\to}} \tilde\sigma_-(0)>0$.
\end{assumption}

\begin{assumption}\label{prespecified_tilde_sigma_assump}
$\tilde\sigma(x)$ is a deterministic function with $\lim_{x\downarrow 0}\tilde \sigma(x)=\tilde \sigma_-(0)>0$
and
$\lim_{x\uparrow 0}\tilde \sigma(x)=\tilde \sigma_+(0)>0$.
\end{assumption}

Assumption~\ref{estimated_tilde_sigma_assump} corresponds to the estimate of the
variance function used in the application. It generalizes
Assumption~\ref{estimated_tilde_sigma_assump_main_text} slightly by allowing
$\hat\sigma_+$ and $\hat\sigma_-$ to converge to something other than the left-
and right-hand limits of the true variance function.
Assumption~\ref{prespecified_tilde_sigma_assump} is used in deriving bounds
based on infeasible estimates that use the true variance function.

Note that, under Assumption~\ref{estimated_tilde_sigma_assump},
$\tilde\sigma_+(0)$ is defined as the probability limit of $\hat\sigma_+$ as
$n\to\infty$, and does not give the limit of $\tilde\sigma(x)$ as $x\downarrow
0$ (and similarly for $\tilde\sigma_-(0)$). We use this notation so that certain
limiting quantities can be defined in the same way under each of the
Assumptions~\ref{prespecified_tilde_sigma_assump}
and~\ref{estimated_tilde_sigma_assump}.

\subsection{General Results for Kernel Estimators}%
\label{sec:sap:rd_asym_general_results}

We first state results for affine estimators where the weights asymptotically take a kernel form.
%
%
%
%
We consider a sequence of estimators of the form
\begin{align*}
\hat L=\frac{\sum_{i=1}^n k_n^+(x_i/h_n)\1{x_i>0} y_i}
         {\sum_{i=1}^n k_n^+(x_i/h_n)\1{x_i>0}}
  -\frac{\sum_{i=1}^n k_n^-(x_i/h_n)\1{x_i<0} y_i}
         {\sum_{i=1}^n k_n^-(x_i/h_n)\1{x_i<0}}
\end{align*}
where $k_n^+$ and $k_n^-$ are sequences of kernels.
%
The assumption that the same bandwidth is used on each side of the discontinuity is a normalization: it can always be satisfied by redefining one of the kernels $k_n^+$ or $k_n^-$.
We make the following assumption on the sequence of kernels.

\begin{assumption}\label{rd_kern_assump}
The sequences of kernels and bandwidths $k^+_n$ and $h_n$ satisfy
\begin{itemize}
\item[(i)] $k^+_n$ has support bounded uniformly over $n$. For a bounded
  kernel $k^+$ with $\int k^+(u)\, du>0$, we have
  $\sup_x\left|k^+_n(x)-k^{+}(x)\right|\to 0$%

\item[(ii)]
  $\frac{1}{n h_n}\sum_{i=1}^n k^+_n(x_i/h_n)\1{x_i>0}
  (x_i,\ldots,x_i^{p-1})'=0$ for each $n$

\item[(iii)] $h_{n}n^{1/(2p+1)}\to h_{\infty}$ for some constant $0<h_{\infty}<\infty$,
\end{itemize}
and similarly for $k^-_n$ for some $k^-$.
\end{assumption}



Let
\begin{align*}
&\overline{\text{bias}}_n
=\frac{\sum_{i=1}^n |k_n^+(x_i/h_n)|\1{x_i>0}C|x_i|^p}
         {\sum_{i=1}^n k_n^+(x_i/h_n)\1{x_i>0}}
  +\frac{\sum_{i=1}^n |k_n^-(x_i/h_n)|\1{x_i<0} C|x_i|^p}
         {\sum_{i=1}^n k_n^-(x_i/h_n)\1{x_i<0}}  \\
&=Ch_n^{p}\left(\frac{\sum_{i=1}^n |k_n^+(x_i/h_n)|\1{x_i>0}|x_i/h_n|^p}
         {\sum_{i=1}^n k_n^+(x_i/h_n)\1{x_i>0}}
  +\frac{\sum_{i=1}^n |k_n^-(x_i/h_n)|\1{x_i<0}|x_i/h_n|^p}
         {\sum_{i=1}^n k_n^-(x_i/h_n)\1{x_i<0}}\right)
\end{align*}
and
\begin{align*}
v_n&=\frac{\sum_{i=1}^n k_n^+(x_i/h_n)^2\1{x_i>0}\sigma^2(x_i)}
         {\left[\sum_{i=1}^n k_n^+(x_i/h_n)\1{x_i>0}\right]^2}
  +\frac{\sum_{i=1}^n k_n^-(x_i/h_n)^2\1{x_i<0}\sigma^2(x_i)}
         {\left[\sum_{i=1}^n k_n^-(x_i/h_n)\1{x_i<0}\right]^2}  \\
&=\frac{1}{nh_n}
\left(\frac{\frac{1}{nh_n}\sum_{i=1}^n k_n^+(x_i/h_n)^2\1{x_i>0}\sigma^2(x_i)}
         {\left[\frac{1}{nh_n}\sum_{i=1}^n k_n^+(x_i/h_n)\1{x_i>0}\right]^2}
  +\frac{\frac{1}{nh_n}\sum_{i=1}^n k_n^-(x_i/h_n)^2\1{x_i<0}\sigma^2(x_i)}
         {\left[\frac{1}{nh_n}\sum_{i=1}^n k_n^-(x_i/h_n)\1{x_i<0}\right]^2}\right)
\end{align*}
Note that $v_n$ is the (constant over $Q\in\mathcal{Q}_n$) variance of $\hat L$,
and that, by arguments in \Cref{rd_bias_sec},
$\overline{\text{bias}}_n=\sup_{f\in\mathcal{F}}(E_{f,Q}\hat{L}-Lf)=-\inf_{f\in\mathcal{F}}
(E_{f,Q}\hat{L}-Lf)$ for any $Q\in\mathcal{Q}_n$ under
Assumption~\ref{rd_kern_assump}~(ii).

To form a feasible CI, we need an estimate of $v_n$. While the results below go
through with any variance estimate that is consistent uniformly over $f,\mathcal{Q}_n$, we propose
one here for concreteness. For a possibly data-dependent guess
$\tilde \sigma(\cdot)$ of the variance function, let $\tilde v_n$ denote $v_n$
with $\sigma(\cdot)$ replaced by $\tilde \sigma(\cdot)$. We record the limiting
behavior of $\overline{\text{bias}}_n$, $v_n$ and $\tilde v_n$ in the following
lemma. Let
\begin{align*}
&\overline{\text{bias}}_\infty
%
=Ch_\infty^p
\left(\frac{\int_{0}^\infty |k^+(u)||u|^p\, du}
         {\int_{0}^\infty k^+(u)\, du}
  +\frac{\int_{-\infty}^0 |k^-(u)||u|^p\, du}
         {\int_{-\infty}^0 k^-(u)\, du}\right)
\end{align*}
and
\begin{align*}
&v_\infty
%
=\frac{1}{h_\infty}
\left(\frac{\sigma^2_+(0)\int_{0}^\infty k^+(u)^2\, du}
         {p_{X,+}(0)\left[\int_0^\infty k^+(u)\, du\right]^2}
  +\frac{\sigma^2_-(0)\int_{-\infty}^0 k^-(u)^2\, du}
         {p_{X,-}(0)\left[\int_{-\infty}^0 k^-(u)\, du\right]^2}\right).
\end{align*}

\begin{lemma}\label{rd_bias_var_limit_lemma}
  Suppose that Assumption~\ref{rd_xs_assump} holds. If
  Assumption~\ref{rd_kern_assump} also holds, then
  $\lim_{n\to\infty}
  n^{p/(2p+1)}\overline{\text{bias}}_n=\overline{\text{bias}}_\infty$ and
  $\lim_{n\to\infty}n^{2p/(2p+1)}v_n=v_\infty$. If, in addition,
  $\tilde\sigma(\cdot)$ satisfies Assumption~\ref{estimated_tilde_sigma_assump}
  or Assumption~\ref{prespecified_tilde_sigma_assump} with
  $\tilde\sigma_+(0)=\sigma_+(0)$ and $\tilde\sigma_-(0)=\sigma_-(0)$, then
  $n^{2p/(2p+1)}\tilde v_n
  \underset{\mathcal{F},\mathcal{Q}_n}{\overset{p}{\to}}v_\infty$ under
  Assumption~\ref{estimated_tilde_sigma_assump} and
  $\lim_{n\to\infty}n^{2p/(2p+1)}\tilde v_n=v_\infty$ under
  Assumption~\ref{prespecified_tilde_sigma_assump}.
\end{lemma}
\begin{proof}
  The results follow from applying the convergence in
  Assumption~\ref{rd_xs_assump} along with Assumption~\ref{rd_kern_assump}(i) to
  the relevant terms in $\overline{\text{bias}}_n$ and $\tilde v_n$.
\end{proof}

\begin{theorem}\label{rd_limit_high_level_thm}
  Suppose that Assumptions~\ref{rd_xs_assump}, \ref{rd_errors_assump}
  and~\ref{rd_kern_assump} hold, and that $\tilde v_n$ is formed using a
  variance function $\tilde\sigma(\cdot)$ that satisfies
  Assumption~\ref{estimated_tilde_sigma_assump}
  or~\ref{prespecified_tilde_sigma_assump} with $\tilde\sigma_+(0)=\sigma_+(0)$
  and $\tilde\sigma_-(0)=\sigma_-(0)$. Then
\begin{equation*}
  \liminf_{n\to\infty}
  \inf_{f\in\mathcal{F}_{RDT,p}(C),Q\in\mathcal{Q}_n}
  P_{f,Q}\left(Lf\in\left\{\hat L\pm \cv_\alpha\left(\overline{\text{bias}}_n/\tilde v_n\right)\sqrt{\tilde v_n}\right\}\right)
  \ge 1-\alpha
\end{equation*}
and,
letting $\hat c=\hat L-\overline{\text{bias}}_n-z_{1-\alpha}\sqrt{\tilde v_n}$,
\begin{equation*}
  \liminf_{n\to\infty}\inf_{f\in\mathcal{F}_{RDT,p}(C),Q\in\mathcal{Q}_n}
  P_{f,Q}\left(Lf
    \in\hor{\hat c,\infty}
  \right)\ge 1-\alpha.
\end{equation*}
In addition,
$n^{p/(2p+1)}\cv_\alpha(\overline{\text{bias}}_n/\tilde v_{n})\tilde v_n
\underset{\mathcal{F},\mathcal{Q}_n}{\overset{p}{\to}}
\cv_\alpha(\overline{\text{bias}}_\infty/v_{\infty})v_\infty$ if
$\tilde\sigma(\cdot)$ satisfies Assumption~\ref{estimated_tilde_sigma_assump}
and
$n^{p/(2p+1)}\cv_\alpha(\overline{\text{bias}}_n/\tilde v_n)\tilde{v}_n\to
\cv_\alpha(\overline{\text{bias}}_\infty/v_\infty) v_\infty$ if
$\tilde\sigma(\cdot)$ satisfies
Assumption~\ref{prespecified_tilde_sigma_assump}. The minimax $\beta$ quantile
of the one-sided CI satisfies
\begin{align*}
\limsup_{n\to\infty} n^{p/(2p+1)}\sup_{f\in\mathcal{F}_{RDT,p}(C),Q\in\mathcal{Q}_n} q_{f,Q,\beta}(Lf-\hat c)
\le 2\overline{\text{bias}}_\infty+(z_\beta+z_{1-\alpha})\sqrt{v_\infty}.
\end{align*}
The worst-case $\beta$ quantile over $\mathcal{F}_{RDT,p}(0)$ satisfies
\begin{align*}
\limsup_{n\to\infty} n^{p/(2p+1)}\sup_{f\in\mathcal{F}_{RDT,p}(0),Q\in\mathcal{Q}_n} q_{f,Q,\beta}(Lf-\hat c)
\le \overline{\text{bias}}_\infty+(z_\beta+z_{1-\alpha})\sqrt{v_\infty}.
\end{align*}
Furthermore, the same holds with $\hat L$, $\overline{\text{bias}}_n$ and $\tilde v_n$ replaced by any $\hat L^*$, $\overline{\text{bias}}_n^*$ and $\tilde v_n^*$ such that
\begin{align*}
n^{p/(2p+1)}\left(\hat L-\hat L^*\right) \underset{\mathcal{F},\mathcal{Q}_n}{\overset{p}{\to}} 0,
& &
n^{p/(2p+1)}\left(\overline{\text{bias}}_n-\overline{\text{bias}}_n^*\right) \underset{\mathcal{F},\mathcal{Q}_n}{\overset{p}{\to}} 0,
& &
\frac{\tilde v_n}{\tilde v_n^*}
\underset{\mathcal{F},\mathcal{Q}_n}{\overset{p}{\to}} 1.
\end{align*}

\end{theorem}
\begin{proof}
  We verify the conditions of Theorem~\ref{high_level_asymptotic_thm}.
  Condition~\eqref{se_convergence_eq} follows from
  Lemma~\ref{rd_bias_var_limit_lemma}. To verify~\eqref{epsilon_clt_eq}, note
  that $\hat{L}$ takes the general form in
  Theorem~\ref{high_level_asymptotic_thm} with $w_{n,i}$ given by
  $w_{n,i}=k_n^+(x_i/h_n)/\sum_{j=1}^n k_n^+(x_j/h_n)\1{x_j>0}$ for $x_i>0$ and
  $w_{n,i}=k_n^-(x_i/h_n)/\sum_{j=1}^n k_n^-(x_j/h_n)\cdot \1{x_j<0}$ for $x_i<0$. The
  uniform central limit theorem in~\eqref{epsilon_clt_eq} with $w_{n,i}$ taking
  this form follows from Lemma~\ref{lindeberg_clt_lemma}. This gives the
  asymptotic coverage statements.

  For the asymptotic formulas for excess length of the one-sided CI and length
  of the two-sided CI, we apply Theorem~\ref{high_level_asym_equivalence_thm}
  with $n^{-p/(2p+1)}\overline{\text{bias}}_\infty$ playing the role of
  $\widetilde{\overline{\text{bias}}}_n$ and $n^{-p/(2p+1)}v_\infty$ playing the
  role of $\widetilde{\text{se}}_n$. Finally, the last statement of the theorem
  is immediate from Theorem~\ref{high_level_asym_equivalence_thm}.
\end{proof}

\subsection{Local Polynomial Estimators}%
\label{sec:sap:rd_local_polynomial}

The $(p-1)$th order local polynomial estimator of $f_+(0)$ based on kernel
$k^*_+$ and bandwidth $h_{+,n}$ is given by
\begin{align*}
  \hat f_{+}(0) =
  &e_1'
    \left(\sum_{i=1}^{n} p(x_i/h_{+,n})p(x_i/h_{+,n})'k^*_+(x_i/h_{+,n})\1{x_i>0}
    \right)^{-1}  \\
  &\sum_{i=1}^n k^*_+(x_i/h_{+,n})\1{x_i>0}p(x_i/h_{+,n}) y_i
\end{align*}
where $e_1=(1,0,\ldots,0)'$ and $p(x)=(1,x,x^2,\ldots,x^{p-1})'$.
Letting the local polynomial estimator of $f_-(0)$ be defined analogously for some kernel $k^*_-$ and bandwidth $h_{-,n}$, the local polynomial estimator of $Lf=f_+(0)-f_-(0)$ is given by
\begin{align*}
\hat L=\hat f_+(0)-\hat f_-(0).
\end{align*}
This takes the form given in \Cref{sec:sap:rd_asym_general_results}, with
$h_n=h_{n,+}$,
\begin{align*}
  k_n^+(u) =e_1' \left(\frac{1}{nh_n}\sum_{i=1}^{n}
    p(x_i/h_{+,n})p(x_i/h_{+,n})'k^*_+(x_i/h_{+,n})\1{x_i>0}\right)^{-1}
  k^*_+(u)p(u)\1{u>0}
\end{align*}
and
\begin{align*}
  k_n^-(u) =
  &e_1'
    \left(\frac{1}{nh_n}\sum_{i=1}^{n} p(x_i/h_{-,n})p(x_i/h_{-,n})'k^*_+(x_i/h_{-,n})
    \1{x_i<0}\right)^{-1}  \\
  &k^*_+(u (h_{n,+}/h_{n,-}))p(u(h_{n,+}/h_{n,-}))\1{u<0}.
\end{align*}
Let $M^+$ be the $(p-1)\times(p-1)$ matrix with
$\int_0^\infty u^{j+k-2}k_+^*(u)$ as the $i,j$th entry, and let $M^-$ be the
$(p-1)\times(p-1)$ matrix with $\int_{-\infty}^0 u^{j+k-2}k_-^*(u)$ as the
$i,j$th entry. Under Assumption~\ref{rd_xs_assump}, for $k_+^*$ and $k_-^*$
bounded with bounded support,
$\frac{1}{nh_n}\sum_{i=1}^{n}p(x_i/h_{+,n})p(x_i/h_{+,n})'k^*_+(x_i/h_{+,n})\cdot
\1{x_i>0}\to M^+p_{X,+}(0)$ and similarly
$\frac{1}{nh_n}\sum_{i=1}^{n}
p(x_i/h_{-,n})p(x_i/h_{-,n})'k^*_+(x_i/h_{-,n})\cdot \1{x_i<0} \to
M^{-}p_{X,-}(0)$. Furthermore, Assumption~\ref{rd_kern_assump}~(ii) follows
immediately from the normal equations for the local polynomial estimator. This
gives the following result.

\begin{theorem}
  Let $k^*_+$ and $k_-^*$ be bounded and uniformly continuous with bounded
  support. Let $h_{n,+}n^{1/(2p+1)}\to h_\infty>0$ and suppose $h_{n,-}/h_{n,+}$
  converges to a strictly positive constant. Then
  Assumption~\ref{rd_kern_assump} holds for the local polynomial estimator so
  long as Assumption~\ref{rd_xs_assump} holds.
\end{theorem}

\subsection{Optimal Affine Estimators}%
\label{sec:sap:rd_opt_est}

We now consider the class of affine estimators that are optimal under the
assumption that the variance function is given by $\tilde \sigma(\cdot)$, which
satisfies either Assumption~\ref{estimated_tilde_sigma_assump} or
Assumption~\ref{prespecified_tilde_sigma_assump}. We use the same notation as in
\Cref{sec:addit-deta-rd}, except that $n$ and/or $\tilde\sigma(\cdot)$ are added
as subscripts for many of the objects under consideration to make the dependence
on $\{x_i\}_{i=1}^n$ and $\tilde\sigma(\cdot)$ explicit.

The modulus problem is given by Equation~\eqref{rd_modulus_eq} in
\Cref{rd_modulus_solution_sec} with $\tilde\sigma(\cdot)$ in place of
$\sigma(\cdot)$. We use $\omega_{\tilde\sigma(\cdot),n}(\delta)$ to denote the
modulus, or $\omega_{n}(\delta)$ when the context is clear. The corresponding
estimator $\hat L_{\delta,\tilde\sigma(\cdot)}$ is then given by
Equation~\eqref{Ldelta_rd_eq} in \Cref{rd_modulus_solution_sec} with
$\tilde\sigma(\cdot)$ in place of $\sigma(\cdot)$.

We will deal with the inverse modulus, and use
Lemma~\ref{modulus_convergence_lemma} to obtain results for the modulus itself.
The inverse modulus $\omega_{\tilde\sigma(\cdot),n}^{-1}(2b)$ is given by
Equation~\eqref{rd_inverse_modulus_eq} in \Cref{rd_modulus_solution_sec}, with
$\tilde\sigma^2(x_i)$ in place of $\sigma^2(x_i)$, and the solution takes the
form given in that section. Let $h_n=n^{-1/(2p+1)}$. We will consider a sequence
$b=b_n$, and will define $\tilde b_n=n^{p/(2p+1)} b_n=h_n^{-p} b_n$. Under
Assumption~\ref{prespecified_tilde_sigma_assump}, we will assume that
$\tilde{b}_n\to \tilde b_\infty$ for some $\tilde b_\infty>0$. Under
Assumption~\ref{estimated_tilde_sigma_assump}, we will assume that
$\tilde{b}_n\underset{\mathcal{F},\mathcal{Q}_n}{\overset{p}{\to}}
\tilde{b}_\infty$ for some $\tilde b_\infty>0$. We will then show that this
indeed holds for $2b_n=\omega_{\tilde\sigma(\cdot),n}(\delta_n)$ with $\delta_n$
chosen as in Theorem~\ref{rd_optimal_estimator_thm} below.

Let $\tilde b_n=n^{p/(2p+1)} b_n=h_n^{-p} b_n$,
$\tilde b_{-,n}=n^{p/(2p+1)} b_{-,n}=h_n^{-p} b_{-,n}$,
$\tilde{d}_{+,j,n}=n^{(p-j)/(2p+1)}d_{+,j,n}=h_n^{j-p}d_{+,j,n}$ and
$\tilde{d}_{-,j,n}=n^{(p-j)/(2p+1)}d_{-,j,n}=h_n^{j-p}d_{-,j,n}$ for
$j=1,\ldots,p-1$, where $b_n$, $b_{-,n}$, $d_{+,n}$, and $d_{-n}$ correspond to
the function $g_{b,C}$ that solves the inverse modulus problem, given in
\Cref{rd_modulus_solution_sec}. These values of $\tilde b_{+,n}$,
$\tilde b_{-,n}$, $\tilde d_{+,n}$ and $\tilde{d}_{-,n}$ minimize
$G_n(b_+,b_-,d_+,d_-)$ subject to $b_++b_-=\tilde b_n$ where, letting
$\mathcal{A}(x_{i},b,d)=h_n^{p} b+\sum_{j=1}^{p-1}h_n^{p-j}d_{j} x_i^j$,
\begin{multline*}
  G_n(b_+,b_-,d_+,d_-)
  =\\
  \sum_{i=1}^n\tilde\sigma^{-2}(x_i) \left(
    (\mathcal{A}(x_{i},b_{+},d_{+})-C\abs{x_{i}^{p}} )_{+}
    +\left(\mathcal{A}(x_{i},b_{+},d_{+})+C|x_i|^p\right)_-\right)^2\1{x_i>0}  \\
  +\sum_{i=1}^n\tilde\sigma^{-2}(x_i) \left(
    (\mathcal{A}(x_{i},b_{-},d_{-})-C\abs{x_{i}}^{p})
    +\left(\mathcal{A}(x_{i},b_{-},d_{-})+C|x_i|^p\right)_-\right)^2\1{x_i<0}  \\
  =\frac{1}{nh_n} \sum_{i=1}^n
  k_{\tilde\sigma(\cdot)}^+(x_i/h_n;b_+,d_+)^2\tilde\sigma^2(x_i)
  +\frac{1}{nh_n} \sum_{i=1}^n
  k_{\tilde\sigma(\cdot)}^-(x_i/h_n;b_-,d_-)^2\tilde\sigma^2(x_i)
\end{multline*}
with
\begin{align*}
  &k_{\tilde\sigma(\cdot)}^+(u;b,d)=\tilde\sigma^{-2}(u h_n)
  \left(\left(b+\sum_{j=1}^{p-1}d_{j} u^{j} -C|u|^p\right)_+-
    \left(b+\sum_{j=1}^{p-1}d_j u^j+C|u|^p\right)_-\right)\1{u>0},  \\
  &k_{\tilde\sigma(\cdot)}^-(u;b,d)=\tilde\sigma^{-2}(u h_n)
  \left(\left(b+\sum_{j=1}^{p-1}d_{j}
      u^j-C|u|^p\right)_+-\left(b+\sum_{j=1}^{p-1}d_{j} u^j
      +C|u|^p\right)_-\right)\1{u<0}.
\end{align*}
We use the notation $k^+_c$ for a scalar $c$ to denote
$k^+_{\tilde\sigma(\cdot)}$ where $\tilde\sigma(\cdot)$ is given by the constant
function $\tilde\sigma(x)=c$.

With these definitions, the estimator
$\hat{L}_{\delta,\tilde\sigma(\cdot)}$ with
$\omega_{\tilde\sigma(\cdot),n}(\delta)=2b_n$ takes the general kernel form in
\Cref{sec:sap:rd_asym_general_results} with
$k_n^+(u)=k_{\tilde\sigma(\cdot)}^+(u;\tilde b_{+,n},\tilde d_{+,n})$ and
similarly for $k_n^-$. In the notation of
\Cref{sec:sap:rd_asym_general_results}, $\overline{\text{bias}}_n$
is given by
$\frac{1}{2}(\omega_{\tilde\sigma(\cdot),n}(\delta)-\delta
\omega_{\tilde\sigma(\cdot),n}'(\delta))$ and $\tilde v_n$ is given by
$\omega_{\tilde\sigma(\cdot),n}'(\delta)^2$
(see Equation (\ref{eq:maxbias-minbias}) in the main text).
If $\delta$ is chosen to minimize the length of the
fixed-length CI, the half-length will be given by
\begin{align*}
\cv_\alpha(\overline{\text{bias}}_n/\sqrt{\tilde v_n})\sqrt{\tilde v_n}
=\inf_{\delta>0}\cv_\alpha\left(\frac{\omega_{\tilde\sigma(\cdot),n}(\delta)}{2\omega_{\tilde\sigma(\cdot),n}'(\delta)}-\frac{\delta}{2}\right)
\omega_{\tilde\sigma(\cdot),n}'(\delta),
\end{align*}
and $\delta$ will achieve the minimum in the above display. Similarly, if the
MSE criterion is used, $\delta$ will minimize $\overline{\text{bias}}_n^2+v_n$.

We proceed by verifying the conditions of Theorem~\ref{rd_limit_high_level_thm}
for the case where $\tilde\sigma(\cdot)$ is nonrandom and satisfies
Assumption~\ref{prespecified_tilde_sigma_assump}, and then
verifying the conditions in the last display of Theorem~\ref{rd_limit_high_level_thm}
for the case where
$\tilde\sigma(\cdot)$ satisfies Assumption~\ref{estimated_tilde_sigma_assump}.
The limiting kernel $k^+$ and $k^-$ in Assumption~\ref{rd_kern_assump} will
correspond to an asymptotic version of the modulus problem, which we now
describe. Let
\begin{multline*}
  G_\infty(b_+,b_-,d_+,d_-)=
  p_{X,+}(0)\int_{0}^\infty
  \tilde\sigma_+^2(0)k^+_{\tilde\sigma_+(0)}(u;b_+,d_+)^2\, du
  \\+p_{X,-}(0)\int_{0}^\infty
  \tilde\sigma_-^2(0)k^+_{\tilde\sigma_-(0)}(u;b_+,d_+)^2\, du.
\end{multline*}
%
Consider the limiting inverse modulus problem
\begin{align*}
\omega_{\tilde\sigma_+(0),\tilde\sigma_-(0),\infty}^{-1}(2\tilde b_\infty)
=&\min_{f_+,f_-\in\mathcal{F}_{RDT,p}(C)}
\sqrt{\frac{p_{X,+}(0)}{\tilde\sigma_+^2(0)}\int_0^\infty f_+(u)^2\, du
+\frac{p_{X,-}(0)}{\tilde\sigma_-^2(0)}\int_{-\infty}^0 f_-(u)^2\, du}  \\
&\text{ s.t. }
f_+(0)+f_-(0)\ge \tilde b_\infty.
\end{align*}
We use
$\omega_\infty(\delta)=\omega_{\tilde\sigma_+(0),\tilde\sigma_-(0),\infty}(\delta)$
to denote the limiting modulus corresponding to this inverse modulus. The
limiting inverse modulus problem is solved by the functions
$f_+(u)=\tilde\sigma_+^2(0)k^+_{\tilde\sigma_+(0)}(u;b_+,d_+)=k^+_{1}(u;b_+,d_+)$
and
$f_-(u)=\tilde\sigma_-^2(0)k^+_{\tilde\sigma_-(0)}(u;b_-,d_-)=k^-_{1}(u;b_+,d_+)$
for some $(b_+,b_-,d_+,d_-)$ with $b_++b_-=\tilde b_\infty$ (this holds by the
same arguments as for the modulus problem in \Cref{rd_modulus_solution_sec}).
Thus, for any minimizer of $G_\infty$, the functions $k^+_{1}(\cdot;b_+,d_+)$
and $k^+_{1}(\cdot;b_+,d_+)$ must solve the above inverse modulus problem. The
solution to this problem is unique by strict convexity, which implies that
$G_\infty$ has a unique minimizer. Similarly, the minimizer of $G_n$ is unique
for each $n$. Let
$(\tilde{b}_{+,\infty}, \tilde b_{-,\infty},\tilde d_{+,\infty},
\tilde{d}_{-,\infty})$ denote the minimizer of $G_\infty$. The limiting kernel
$k^+$ in Assumption~\ref{rd_kern_assump} will be given by
$k^+_{\tilde\sigma_+(0)}(\cdot;\tilde b_{+,\infty},\tilde d_{+,\infty})$ and
similarly for $k^-$.

To derive the form of the limiting modulus of continuity, we argue as in
\citet{DoLo92}.
Let $k^+_{1}(\cdot;\tilde b_{+,\infty,1},\tilde d_{+,\infty,1})$ and $k^+_{1}(\cdot;\tilde b_{+,\infty,1},\tilde d_{+,\infty,1})$ solve the inverse modulus problem $\omega_\infty^{-1}(2\tilde b_\infty)$ for $\tilde b_{\infty}=1$.
The feasible set for a given $\tilde b_\infty$ consists of all $b_+,b_-,d_+,d_-$ such that $b_++b_-\ge \tilde b_\infty$, and a given $b_+,b_-,d_+,d_-$ in this set achieves the value
\begin{align*}
&\sqrt{\frac{p_{X,+}(0)}{\tilde\sigma_+^2(0)}\int_0^\infty k^+_{1}(u;b_+,d_+)^2\, du
+\frac{p_{X,-}(0)}{\tilde\sigma_-^2(0)}\int_{-\infty}^0 k^-_{1}(u;b_-,d_-)^2\, du}  \\
&=\sqrt{\frac{p_{X,+}(0)}{\tilde\sigma_+^2(0)}\int_0^\infty k^+_{1}(v b_\infty^{1/p};b_+,d_+)^2\, d(v b_\infty^{1/p})
+\frac{p_{X,-}(0)}{\tilde\sigma_-^2(0)}\int_{-\infty}^0 k^-_{1}(v b_\infty^{1/p};b_-,d_-)^2\, d(v b_\infty^{1/p})}  \\
&=\sqrt{\frac{p_{X,+}(0)}{\tilde\sigma_+^2(0)} \tilde b_\infty^{1/p}\int_0^\infty \tilde b_\infty^2 k^+_{1}(v;b_+/\tilde b_\infty,\bar d_+)^2\, dv
+\frac{p_{X,-}(0)}{\tilde\sigma_-^2(0)}\tilde b_\infty^{1/p}\int_{-\infty}^0 \tilde b_\infty^2 k^-_{1}(v ;b_-/\tilde b_\infty,\bar d_-)^2\, dv},
\end{align*}
where $\bar d_+=(d_{+,1}/\tilde b_\infty^{(p-1)/p},\ldots,d_{+,p-1}/\tilde b_\infty^{1/p})'$
and similarly for $\bar d_-$.
This uses the fact that, for any $h>0$,
$h^{p}k^+_{1}(u/h;b_+,d_+)=k^+_{1}(u;b_+h^p,d_{+,1}h^{p-1},d_{+,2}h^{p-2},\ldots,d_{+,p-1}h)$
and similarly for $k^-_1$.
%
This can be seen to be $\tilde b_\infty^{(2p+1)/(2p)}$ times the objective evaluated at $(b_+/\tilde b_\infty,b_-/\tilde b_\infty,\bar d_+,\bar d_-)$, which is feasible under $\tilde b_\infty=1$.  Similarly, for any feasible function under $\tilde b_\infty=1$, there is a feasible function under a given $\tilde b_\infty$ that achieves $\tilde b_\infty^{(2p+1)/(2p)}$ times the value of under $\tilde b_\infty=1$.  It follows that
$\omega_{\infty}^{-1}(2b)=b^{(2p+1)/(2p)}\omega_\infty(2)$.
Thus, $\omega_{\infty}^{-1}$ is invertible and the inverse $\omega_{\infty}$ satisfies
$\omega_{\infty}(\delta)=\omega_{\tilde\sigma_+(0),\tilde\sigma_-(0),\infty}(\delta)=\delta^{2p/(2p+1)}\omega_{\tilde\sigma_+(0),\tilde\sigma_-(0),\infty}(1)$.

If $\tilde b_\infty=\omega_\infty(\delta_\infty)$ for some $\delta_\infty$, then
it can be checked that the limiting variance and worst-case bias defined in
\Cref{sec:sap:rd_asym_general_results} correspond to the limiting modulus
problem:
\begin{align}\label{limiting_optimal_bias_var_eq}
\overline{\text{bias}}_\infty=\frac{1}{2}\left(\omega_\infty(\delta_\infty)-\delta_\infty\omega_\infty'(\delta_\infty)\right),
\quad
\sqrt{v_\infty}=\omega'_\infty(\delta_\infty).
\end{align}
Furthermore, we will show that, if $\delta$ is chosen to optimize FLCI length for $\omega_{\tilde\sigma(\cdot),n}$, then this will hold with $\delta_\infty$ optimizing $\cv_\alpha(\overline{\text{bias}}_\infty/\sqrt{v_\infty})\sqrt{v_\infty}$.
Similarly, if $\delta$ is chosen to optimize MSE for $\omega_{\tilde\sigma(\cdot),n}$, then this will hold with $\delta_\infty$ optimizing $\overline{\text{bias}}_\infty^2+v_\infty$.

We are now ready to state the main result concerning the asymptotic validity and efficiency of feasible CIs based on the estimator given in this section.

\begin{theorem}\label{rd_optimal_estimator_thm}
  Suppose Assumptions~\ref{rd_xs_assump} and~\ref{rd_errors_assump} hold. Let
  $\hat L=\hat L_{\delta_n,\tilde\sigma(\cdot)}$ where $\delta_n$ is chosen to
  optimize one of the performance criteria for $\omega_{\tilde\sigma(\cdot),n}$
  (FLCI length, RMSE, or a given quantile of excess length), and suppose that
  $\tilde\sigma(\cdot)$ satisfies Assumption~\ref{estimated_tilde_sigma_assump}
  or Assumption~\ref{prespecified_tilde_sigma_assump} with
  $\tilde\sigma_+(0)=\sigma_+(0)$ and $\tilde\sigma_+(0)=\sigma_-(0)$. Let
  $\overline{\text{bias}}_n=\frac{1}{2}(\omega_{\tilde\sigma(\cdot),n}(\delta_n)-\delta_n
  \omega_{\tilde\sigma(\cdot),n}'(\delta_n))$ and
  $\tilde v_n=\omega_{\tilde\sigma(\cdot),n}'(\delta_n)^2$ denote the worst-case
  bias and variance formulas. Let
  $\hat c_{\alpha,\delta_n}=\hat L-\overline{\text{bias}}_n-z_{1-\alpha}
  \sqrt{\tilde v_n}$ and
  $\hat \chi=\cv_\alpha(\overline{\text{bias}}_n/\sqrt{\tilde{v}_n})
  \sqrt{\tilde{v}_n}$ so that $\hor{\hat c_{\alpha,\delta_n},\infty}$ and
  $[\hat L-\hat\chi,\hat L+\hat \chi]$ give the corresponding CIs.

The CIs $\hor{\hat c_{\alpha,\delta_n},\infty}$ and
$[\hat L-\hat\chi,\hat L+\hat \chi]$ have uniform asymptotic coverage at least
$1-\alpha$. In addition,
$n^{p/(2p+1)}\hat \chi \underset{\mathcal{F},\mathcal{Q}_n}{\overset{p}{\to}}
\chi_\infty$ where
$\chi_\infty=\cv_{\alpha}(\overline{\text{bias}}_\infty/\sqrt{v_\infty})\sqrt{v_\infty}$
with $\overline{\text{bias}}_\infty$ and $\sqrt{v_\infty}$ given
in~\eqref{limiting_optimal_bias_var_eq} and $\delta_\infty=z_\beta+z_{1-\alpha}$
if excess length is the criterion,
$\delta_\infty=\arg\min_{\delta}
\cv_\alpha(\frac{\omega_{\infty}(\delta)}{2\omega_{\infty}'(\delta)}-\frac{\delta}{2})
\omega_{\infty}'(\delta)$ if FLCI length is the criterion, and
$\delta_\infty=\arg\min_{\delta}
[\frac{1}{4}\left(\omega_\infty(\delta_\infty)
    -\delta_\infty\omega_\infty'(\delta_\infty)\right)^2+\omega_\infty'(\delta)^2
]$ if RMSE is the criterion.

Suppose, in addition, that each $\mathcal{Q}_n$ contains a distribution where
the $u_i$s are normal. If the FLCI criterion is used, then no other sequence of
linear estimators $\tilde L$ can satisfy
\begin{align*}
\liminf_{n\to\infty}\inf_{f\in\mathcal{F},Q\in\mathcal{Q}_n}
P_{f,Q}\left(Lf\in\left\{\tilde L\pm n^{-p/(2p+1)}\chi\right\}\right)\ge 1-\alpha
\end{align*}
with $\chi$ a constant with
$\chi<\chi_\infty$.
In addition, for any sequence of confidence sets $\mathcal{C}$ with
$\liminf_{n\to\infty}\inf_{f\in\mathcal{F},Q\in\mathcal{Q}_n}
P_{f,Q}\left(Lf\in\mathcal{C}\right)\ge 1-\alpha$, we have the following bound
on the asymptotic efficiency improvement at any $f\in\mathcal{F}_{RDT,p}(0)$:
\begin{align*}
\liminf_{n\to\infty}\sup_{Q\in\mathcal{Q}_n} \frac{n^{p/(2p+1)}E_{f,Q} \lambda(\mathcal{C})}
  {2 \chi_\infty}
\ge \frac{(1-\alpha) 2^{2p/(2p+1)}E[(z_{1-\alpha}-Z)^{2p/(2p+1)}\mid Z\leq z_{1-\alpha}]}
{\frac{4p}{2p+1}\inf_{\delta>0}\cv_\alpha\left(\delta/(4p)\right)\delta^{-1/(2p+1)}}
\end{align*}
where $Z\sim \mathcal{N}(0,1)$.

If the excess length criterion is used with quantile $\beta$ (i.e. $\delta_n=z_\beta+z_{1-\alpha}$), then
any one-sided CI $\hor{\hat c,\infty}$ with
\begin{align*}
\liminf_{n\to \infty} \inf_{f\in\mathcal{F},Q\in\mathcal{Q}_n}
P_{f,Q}\left(Lf\in \hor{\hat c,\infty}\right)\ge 1-\alpha
\end{align*}
must satisfy
\begin{align*}
\liminf_{n\to\infty}
\frac{\sup_{f\in\mathcal{F},Q\in\mathcal{Q}_n} q_{f,Q,\beta}\left(Lf-\hat c\right)}
  {\sup_{f\in\mathcal{F},Q\in\mathcal{Q}_n} q_{f,Q,\beta}\left(Lf-\hat c_{\alpha,\delta_n}\right)}\ge 1
%
\end{align*}
and, for any $f\in\mathcal{F}_{RDT,p}(0)$,
\begin{align*}
\liminf_{n\to\infty}
\frac{\sup_{Q\in\mathcal{Q}_n} q_{f,Q,\beta}\left(Lf-\hat c\right)}
  {\sup_{Q\in\mathcal{Q}_n} q_{f,Q,\beta}\left(Lf-\hat c_{\alpha,\delta_n}\right)}\ge \frac{2^{2p/(2p+1)}}{1+2p/(2p+1)}.
\end{align*}




\end{theorem}

To prove this theorem, we first prove a series of lemmas. To deal with the case
where $\delta$ is chosen to optimize FLCI length or MSE, we will use the
characterization of the optimal $\delta$ for these criteria from
\citet{donoho94}, which is described at the beginning of
\Cref{sec:sap:asym_efficiency_bounds}. In particular, for $\rho_A$ and
$\chi_{A,\alpha}$ given in \Cref{sec:sap:asym_efficiency_bounds}, the $\delta$
that optimizes FLCI length is given by the $\delta$ that maximizes
$\omega_{\tilde\sigma(\cdot),n}(\delta)\chi_{A,\alpha}(\delta)/\delta$, and the
resulting FLCI half-length is given by
$\sup_{\delta>0}\omega_{\tilde\sigma(\cdot),n}(\delta)\chi_{A,\alpha}(\delta)/\delta$.
In addition, when $\delta$ is chosen to optimize FLCI length, $\chi_\infty$ in
Theorem~\ref{rd_optimal_estimator_thm} is given by
$\sup_{\delta>0}\omega_{\infty}(\delta)\chi_{A,\alpha}(\delta)/\delta$, and
$\delta_\infty$ maximizes this expression. If $\delta$ is chosen according to
the MSE criterion, then $\delta$ maximizes
$\omega_{\tilde\sigma(\cdot),n}(\delta)\sqrt{\rho_{A}(\delta)}/\delta$ and
$\delta_\infty$ maximizes
$\omega_{\infty}(\delta)\sqrt{\rho_{A}(\delta)}/\delta$.

\begin{lemma}\label{Gn_convergence_lemma}
For any constant $B$, the following holds.
Under Assumption~\ref{prespecified_tilde_sigma_assump},
\begin{align*}
\lim_{n\to\infty}\sup_{\|(b_+,b_-,d_+,d_-)\|\le B}|G_n(b_+,b_-,d_+,d_-)-G_\infty(b_+,b_-,d_+,d_-)|=0.
\end{align*}
Under Assumption~\ref{estimated_tilde_sigma_assump},
\begin{align*}
\sup_{\|(b_+,b_-,d_+,d_-)\|\le B}|G_n(b_+,b_-,d_+,d_-)-G_\infty(b_+,b_-,d_+,d_-)|
\underset{\mathcal{F},\mathcal{Q}_n}{\overset{p}{\to}} 0.
\end{align*}
\end{lemma}
\begin{proof}
  Define $\tilde G_n^+(b_+,d_+) =\frac{1}{nh_n} \sum_{i=1}^n
  k_{1}^+(x_i/h_n;b_+,d_+)^2$, and define $\tilde G_n^-$ analogously. Also,
  $\tilde G_\infty^+(b_+,d_+)=p_{X,+}(0)\int_0^\infty k_1^+(u;b_+,d_+)^2\, du$,
  with $G_\infty^-$ defined analogously. For each $(b_+,d_+)$,
  $\tilde{G}_n(b_+,d_+) \to G_\infty(b_+,d_+)$ by Assumption~\ref{rd_xs_assump}.
  To show uniform convergence, first note that, for some constant $K_1$, the
  support of $k_{1}^+(\cdot;b_+,d_+)$ is bounded by $K_1$ uniformly over
  $\|(b_+,d_+)\|\le B$ and similarly for $k_{1}^-(\cdot;b_-,d_-)$. Thus, for any
  $(b_+,d_+)$ and $(\bar b_+,\bar d_+)$,
\begin{equation*}
  |G_n^+(b_+,d_+)-G_n^+(\bar b_+,\bar d_+)| \le
  \left[\frac{1}{nh_n}\sum_{i=1}^n \1{|x_i/h_n|\le K_1}\right]  \sup_{|u|\le
    K_1} |k_1^+(u;b_+,d_+)-k_1^+(u;\bar b_+,\bar d_+)|.
\end{equation*}
Since the term in brackets converges to a finite constant by
Assumption~\ref{rd_xs_assump} and $k_1^+$ is Lipschitz continuous on any bounded
set, it follows that there exists a constant $K_2$ such that
$|G_n^+(b_+,d_+)-G_n^+(\bar{b}_+,\bar d_+)|\le K_2\|(b_+,d_+)-(\bar b_+,
\bar{d}_+)\|$ for all $n$. Using this and applying pointwise convergence of
$G_n^+(b_+,d_+)$ on a small enough grid along with uniform continuity of
$G_\infty(b_+,d_+)$ on compact sets, it follows that
\begin{align*}
\lim_{n\to\infty}\sup_{\|(b_+,b_-,d_+,d_-)\|\le B}|\tilde G_n(b_+,d_+)-\tilde G_\infty(b_+,d_+)|=0,
\end{align*}
and similar arguments give the same statement for $\tilde{G}_n^{-}$ and
$\tilde{G}_\infty^-$. Under Assumption~\ref{prespecified_tilde_sigma_assump},
\begin{multline*}
  \left|G_n(b_+,b_-,d_+,d_-) -\left[\tilde G_n(b_+,d_+)\tilde\sigma_+^2(0)+
      \tilde G_n(b_-,d_-)\tilde\sigma_-^2(0)\right]\right|  \le\\
  \overline k\cdot \left[\frac{1}{nh_n}\sum_{i=1}^n \1{|x_i/h_n|\le K_1}\right]
  \left[\sup_{0<x\le K_1h_n}
    \left|\tilde\sigma_+^2(0)-\tilde\sigma_+^2(x)\right| +\sup_{-K_1h_n\le x
      <0}\left|\tilde\sigma_-^2(0)-\tilde\sigma_-^2(x)\right|\right]
\end{multline*}
where $\overline k$ is an upper bound for $|k^+_{1}(x)|$ and $|k^-_{1}(x)|$. This
converges to zero by left- and right- continuity of $\tilde\sigma$ at $0$. The
result then follows since $G_\infty(b_+,b_-,d_+,d_-)=\tilde\sigma_+^2(0)
\tilde{G}_\infty^+(b_+,d_+)+\tilde\sigma_-^2(0)\tilde G_\infty^-(b_-,d_-)$.
Under Assumption~\ref{estimated_tilde_sigma_assump}, we have
$G_n(b_+,b_-,d_+,d_-) =\tilde G_n^+(b_+,d_+)\hat\sigma_{+}^2 +
\tilde{G}_n^+(b_-,d_-)\hat\sigma_-^2$, and the result follows from uniform
convergence in probability of $\hat\sigma_+^2$ and $\hat\sigma_-^2$ to
$\tilde\sigma_+^2(0)$ and $\tilde\sigma_-^2(0)$.
\end{proof}

\begin{lemma}\label{Gn_solution_bound_lemma}
  Under Assumption~\ref{prespecified_tilde_sigma_assump}, $\|(\tilde{b}_{+,n},
  \tilde b_{-,n},\tilde d_{+,n},\tilde d_{-,n})\|\le B$ for some constant $B$
  and $n$ large enough. Under Assumption~\ref{estimated_tilde_sigma_assump}, the
  same statement holds with probability approaching one uniformly over
  $\mathcal{F},\mathcal{Q}_n$.
\end{lemma}
\begin{proof}
  Let $\mathcal{A}(x,b,d)=b+\sum_{i=1}^{p-1}d(x/h_{n})^{j}$, where
  $d=(d_{1},\dotsc,d_{p-1})$. Note $G_n(b_+,b_-,d_+,d_-)$ is bounded from below
  by $1/\sup_{|x|\le h_n} \tilde\sigma^2(x)$ times
\begin{multline*}
  \frac{1}{nh_n}\sum_{i:0< x_i\le h_n}
  \left(|\mathcal{A}(x_{i},b_{+},d_{+})|-C\right)_+^2
  +\frac{1}{nh_n}\sum_{i:-h_n\le x_i<0} \left(|\mathcal{A}(x_{i},b_{-},d_{-})|-C\right)_+^2  \\
%
%
%
%
\ge \frac{1}{4 nh_n}\sum_{i:0< x_i\le h_n}
\left[\mathcal{A}(x_{i},b_{+},d_{+})^{2}-4C^2\right]
+\frac{1}{4 nh_n}\sum_{i:-h_n\le x_i< 0}
\left[\mathcal{A}(x_{i},b_{-},d_{-})^{2}-4C^2\right]
\end{multline*}
(the inequality follows since, for any $s\ge 2C$, $(s-C)^2\ge s^2/4\ge
s^2/4-C^2$ and, for $2C\ge s\ge C$, $(s-C)^2\ge 0\ge s^2/4-C^2$). Note that, for
any $B>0$
\begin{multline*}
  \inf_{\max\{|b_+|,|d_{+,1}|,\ldots,|d_{+,p-1}|\}\ge B} \frac{1}{4
    nh_n}\sum_{i:0< x_i\le h_n}
  \mathcal{A}(x_{i},b_{+},d_{+})^{2}  \\
  =B^2 \inf_{\max\{|b_+|,|d_{+,1}|,\ldots,|d_{+,p-1}|\}\ge 1}\frac{1}{4
    nh_n}\sum_{i:0< x_i\le h_n}
  \mathcal{A}(x_{i},b_{+},d_{+})^{2} \\
  \to \frac{p_{X,+}(0)}{4} B^2
  \inf_{\max\{|b_+|,|d_{+,1}|,\ldots,|d_{+,p-1}|\}\ge 1} \int_{0}^\infty
  \left(b_++\sum_{i=1}^{p-1}d_{+,j}u^j\right)^2\, du
\end{multline*}
and similarly for the term involving $\mathcal{A}(x_{i},b_{-},d_{-})$ (the
convergence follows since the infimum is taken on the compact set where
$\max\{|b_+|,|d_{+,1}|,\ldots,|d_{+,p-1}|\}=1$). Combining this with the
previous display and the fact that $\frac{1}{nh}\sum_{i:|x_i|\le h_n}C^2$
converges to a finite constant, it follows that, for some $\eta>0$,
$\inf_{\max\{|b_+|,|d_{+,1}|,\ldots,|d_{+,p-1}|\}\ge B}G_n(b_+,b_-,d_+,d_-)\ge
(B^2\eta-\eta^{-1})/\sup_{|x|\le h_n} \tilde\sigma^2(x)$ for large enough $n$.
Let $K$ be such that $G_\infty(\tilde{b}_{+,\infty},
\tilde{b}_{-,\infty},\tilde{d}_{+,\infty}, \tilde{d}_{-,\infty})\le K/2$ and
$\max\{\tilde\sigma_+^2(0),\tilde\sigma_-^2(0)\}\le K/2$. Under
Assumption~\ref{prespecified_tilde_sigma_assump}, $G_n(
\tilde{b}_{+,\infty},\tilde b_{-,\infty},\tilde d_{+,\infty},
\tilde{d}_{-,\infty})< K$ and $\sup_{|x|\le h_n} \tilde\sigma^2(x)\le K$ for
large enough $n$. Under Assumption~\ref{estimated_tilde_sigma_assump},
$G_n(\tilde b_{+,\infty},\tilde{b}_{-,\infty},
\tilde{d}_{+,\infty},\allowbreak\tilde d_{-,\infty})< K$ and $\sup_{|x|\le h_n}
\tilde\sigma^2(x)\le K$ with probability approaching one uniformly over
$\mathcal{F},\mathcal{Q}_n$. Let $B$ be large enough so that
$(B^2\eta-\eta^{-1})/K>K$. Then, when $G_n(\tilde b_{+,\infty},
\tilde{b}_{-,\infty},\tilde d_{+,\infty},\tilde d_{-,\infty})\le K$ and
$\sup_{|x|\le h_n} \tilde\sigma^2(x)\le K$, $(\tilde b_{+,\infty},
\tilde{b}_{-,\infty},\tilde d_{+,\infty},\tilde d_{-,\infty})$ will give a lower
value of $G_n$ than any $(b_+, b_-, d_+, d_-)$ with
$\max\{|b_+|,|d_{+,1}|,\ldots,|d_{+,p-1}|,|b_-|,|d_{-,1}|,\ldots,|d_{-,p-1}|\}\ge
B$. The result follows from the fact that the max norm on $\mathbb{R}^{2p}$ is
bounded from below by a constant times the Euclidean norm.
\end{proof}

\begin{lemma}\label{bn_convergence_lemma}
  If Assumption~\ref{prespecified_tilde_sigma_assump} holds and $\tilde b_n\to
  \tilde b_\infty$, then $(\tilde b_{+,n},\tilde b_{-,n},\tilde d_{+,n},
  \tilde{d}_{-,n}) \to (\tilde b_{+,\infty},\allowbreak \tilde b_{-,\infty},
  \tilde{d}_{+,\infty},\tilde d_{-,\infty})$. If
  Assumption~\ref{estimated_tilde_sigma_assump} holds and $\tilde{b}_{n}
  \underset{\mathcal{F},\mathcal{Q}_n}{\overset{p}{\to}} \tilde b_\infty>0$,
  $(\tilde b_{+,n},\tilde b_{-,n},\tilde d_{+,n},\tilde d_{-,n})
  \underset{\mathcal{F},\mathcal{Q}_n}{\overset{p}{\to}} (\tilde{b}_{+,\infty},
  \tilde{b}_{-,\infty},\tilde{d}_{+,\infty},\tilde{d}_{-,\infty})$.
\end{lemma}
\begin{proof}
  By Lemma~\ref{Gn_solution_bound_lemma}, $B$ can be chosen so that
  $\|(\tilde{b}_{+,n},\tilde b_{-,n},\tilde d_{+,n},\tilde d_{-,n})\|\le B$ for
  large enough $n$ under Assumption~\ref{prespecified_tilde_sigma_assump} and
  $\|(\tilde{b}_{+,n},\tilde b_{-,n},\tilde d_{+,n},\tilde d_{-,n})\le B\|$ with
  probability one uniformly over $\mathcal{F},\mathcal{Q}_n$ under
  Assumption~\ref{estimated_tilde_sigma_assump}. The result follows from
  Lemma~\ref{Gn_convergence_lemma}, continuity of $G_\infty$ and the fact that
  $G_\infty$ has a unique minimizer.
\end{proof}

\begin{lemma}\label{omega_inverse_convergence_lemma}
  If Assumption~\ref{prespecified_tilde_sigma_assump} holds and $\tilde{b}_n
  \to\tilde b_\infty>0$, then $\omega_n^{-1}(n^{p/(2p+1)} \tilde b_n)\to
  \omega_\infty^{-1}(\tilde b_\infty)$. If
  Assumption~\ref{estimated_tilde_sigma_assump} holds and $\tilde{b}_n
  \underset{\mathcal{F},\mathcal{Q}_n}{\overset{p}{\to}} b_\infty>0$, then
  $\omega_n^{-1}(n^{p/(2p+1)} \tilde{b}_n)
  \underset{\mathcal{F},\mathcal{Q}_n}{\overset{p}{\to}}
  \omega_\infty^{-1}(\tilde b_\infty)$.
\end{lemma}
\begin{proof}
The result is immediate from Lemmas~\ref{Gn_convergence_lemma} and~\ref{bn_convergence_lemma}.
\end{proof}

\begin{lemma}\label{rd_uniform_modulus_convergence_lemma}
Under Assumption~\ref{prespecified_tilde_sigma_assump}, we have, for any $\overline\delta>0$,
\begin{align*}
\sup_{0<\delta\le \overline\delta} \left|n^{p/(2p+1)}\omega_n(\delta)
-\omega_\infty(\delta)\right|\to 0.
\end{align*}
Under Assumption~\ref{estimated_tilde_sigma_assump}, we have, for any $\overline\delta>0$,
\begin{align*}
\sup_{0<\delta\le \overline\delta} \left|n^{p/(2p+1)}\omega_n(\delta)
-\omega_\infty(\delta)\right|
\underset{\mathcal{F},\mathcal{Q}_n}{\overset{p}{\to}} 0.
\end{align*}

\end{lemma}
\begin{proof}
  The first statement is immediate from
  Lemma~\ref{omega_inverse_convergence_lemma} and
  Lemma~\ref{modulus_convergence_lemma} (with $n^{p/(2p+1)}\omega_n$ playing the
  role of $\omega_n$ in that lemma). For the second claim, note that, if
  $|\hat{\sigma}_{+}-\sigma_{+}(0)|\le \eta$ and $|\hat \sigma_{-}-\sigma_{-}(0)|\le
  \eta$, $\omega_{n,\underline\sigma(\cdot)}(\delta)\le
  \omega_{\tilde\sigma(\cdot),n}(\delta) \le
  \omega_{n,\overline\sigma(\cdot)}(\delta)$, where
  $\underline\sigma(x)=(\sigma_+(0)-\eta)\1{x>0}+(\sigma_-(0)-\eta)\1{x<0}$ and
  $\overline\sigma(x)$ is defined similarly. Applying the first statement in the
  lemma and the fact that $|\hat \sigma_+-\sigma_+(0)|\le \eta$ and
  $|\hat{\sigma}_{-} -\sigma_{-}(0)|\le \eta$ with probability approaching one
  uniformly over $\mathcal{F},\mathcal{Q}_n$, it follows that, for any
  $\varepsilon>0$, we will have
\begin{align*}
\omega_{\underline\sigma_+(0),\underline\sigma_-(0),\infty}(\delta)
-\varepsilon
\le n^{p/(2p+1)}\omega_n(\delta)
\le \omega_{\overline\sigma_+(0),\overline\sigma_-(0),\infty}(\delta)
+\varepsilon
\end{align*}
for all $0<\delta<\overline\delta$
with probability approaching one uniformly over $\mathcal{F},\mathcal{Q}_n$.  By making $\eta$ and $\varepsilon$ small, both sides can be made arbitrarily close to $\omega_{\infty}(\delta)=\omega_{\infty,\sigma_+(0),\sigma_-(0)}(\delta)$.
\end{proof}

\begin{lemma}\label{rd_modulus_max_convergence_lemma}
  Let $r$ denote $\sqrt{\rho_A}$ or $\chi_{A,\alpha}$. Under
  Assumption~\ref{prespecified_tilde_sigma_assump},
\begin{align*}
\sup_{\delta>0}n^{p/(2p+1)}\omega_n(\delta)r(\delta/2)/\delta
\to \sup_{\delta>0}\omega_\infty(\delta)r(\delta/2)/\delta.
\end{align*}
Let $\delta_n$ minimize the left-hand side of the above display, and let $\delta^*$ minimize the right-hand side.
Then
$\delta_n\to\delta^*$ under Assumption~\ref{prespecified_tilde_sigma_assump} and
$\delta_n \underset{\mathcal{F},\mathcal{Q}_n}{\overset{p}{\to}} \delta^*$
under Assumption~\ref{estimated_tilde_sigma_assump}.
In addition, for any $0<\alpha<1$ and $Z$ a standard normal variable,
\begin{align*}
\lim_{n\to \infty} (1-\alpha)E[n^{p/(2p+1)}\omega_n(2(z_{1-\alpha}-Z))|Z\le z_{1-\alpha}]
=(1-\alpha)E[\omega_\infty(2(z_{1-\alpha}-Z))|Z\le z_{1-\alpha}].
\end{align*}
\end{lemma}
\begin{proof}
  All the statements are immediate from
  Lemmas~\ref{rd_uniform_modulus_convergence_lemma}
  and~\ref{modulus_objective_convergence_lemma} except for the statement that
  $\delta_n \underset{\mathcal{F},\mathcal{Q}_n}{\overset{p}{\to}} \delta^*$
  under Assumption~\ref{estimated_tilde_sigma_assump}. The statement that
  $\delta_n \underset{\mathcal{F},\mathcal{Q}_n}{\overset{p}{\to}} \delta^*$
  under Assumption~\ref{estimated_tilde_sigma_assump} follows by using
  Lemma~\ref{rd_uniform_modulus_convergence_lemma} and analogous arguments to
  those in Lemma~\ref{modulus_objective_convergence_lemma} to show that
  there exist $0<\underline\delta<\overline\delta$ such that $\delta_n\in
  [\underline\delta,\overline\delta]$ with probability approaching on uniformly
  in $\mathcal{F},\mathcal{Q}_n$, and that
  $\sup_{\delta\in[\underline\delta,\overline\delta]}\left|n^{p/(2p+1)}\omega_n(\delta)r(\delta/2)/\delta-\omega(\delta)r(\delta/2)/\delta\right|\underset{\mathcal{F},\mathcal{Q}_n}{\overset{p}{\to}}
  0$.
%
\end{proof}

\begin{lemma}\label{rd_opt_kern_limit_lemma}
Under Assumptions~\ref{rd_xs_assump} and~\ref{rd_errors_assump},
the following hold.
If Assumption~\ref{prespecified_tilde_sigma_assump} holds
and $\tilde b_n$ is a deterministic sequence with
$\tilde b_n \to \tilde b_\infty>0$, then
\begin{align*}
&\sup_{x}|k_{\tilde\sigma(\cdot)}^+(x;\tilde b_{+,n},\tilde d_{+,n})
  - k_{\tilde\sigma_+(0)}^+(x;\tilde b_{+,\infty},\tilde d_{+,\infty}) |
\to 0,\\
&\sup_{x}|k_{\tilde\sigma(\cdot)}^-(x;\tilde b_{-,n},\tilde d_{-,n})
  - k_{\tilde\sigma_-(0)}^-(x;\tilde b_{-,\infty},\tilde d_{-,\infty}) |
\to 0.
\end{align*}
If Assumption~\ref{estimated_tilde_sigma_assump} holds
and $\tilde b_n$ is a random sequence with
$\tilde b_n\underset{\mathcal{F},\mathcal{Q}_n}{\overset{p}{\to}}
\tilde b_\infty>0$, then
\begin{align*}
\sup_{x}|k_{\tilde\sigma(\cdot)}^+(x;\tilde b_{+,n},\tilde d_{+,n})
  - k_{\tilde\sigma_+(0)}^+(x;\tilde b_{+,\infty},\tilde d_{+,\infty}) |
&\underset{\mathcal{F},\mathcal{Q}_n}{\overset{p}{\to}} 0,\\
\sup_{x}|k_{\tilde\sigma(\cdot)}^-(x;\tilde b_{-,n},\tilde d_{-,n})
  - k_{\tilde\sigma_-(0)}^-(x;\tilde b_{-,\infty},\tilde d_{-,\infty}) |
  &\underset{\mathcal{F},\mathcal{Q}_n}{\overset{p}{\to}} 0
\end{align*}
\end{lemma}
\begin{proof}
Note that
\begin{multline*}
  |k_{\tilde\sigma(\cdot)}^+(x;\tilde b_{+,n},\tilde d_{+,n}) -
  k_{\tilde\sigma_+(0)}^+(x;\tilde b_{+,\infty},\tilde d_{+,\infty})| \le
  |k_{\tilde\sigma(\cdot)}^+(x;\tilde b_{+,n},\tilde{d}_{+,n})
  -k_{\tilde\sigma_+(0)}^+(x;\tilde b_{+,n},\tilde d_{+,n})|\\
  +|k_{\tilde\sigma_+(0)}^+(x;\tilde b_{+,n},\tilde d_{+,n})
  -k_{\tilde\sigma_+(0)}^+(x;\tilde b_{+,\infty},\tilde d_{+,\infty})|.
\end{multline*}
Under Assumption~\ref{prespecified_tilde_sigma_assump}, the first term is, for
large enough $n$, bounded by a constant times $\sup_{0<x< h_n
  K}|\tilde\sigma^{-2}(x)-\tilde\sigma_+^{-2}(0)|$, where $K$ is bound on the
support of $k^+_{1}(\cdot;b_+,d_+)$ over $b_+,d_+$ in a neighborhood of
$\tilde{b}_{+,\infty}, \tilde d_{+,\infty}$. This converges to zero by
Assumption~\ref{prespecified_tilde_sigma_assump}. The second term converges to
zero by Lipschitz continuity of $k^+_{\tilde\sigma_+(0)}$. Under
Assumption~\ref{estimated_tilde_sigma_assump}, the first term is bounded by a
constant times $|\hat\sigma_+^{-2}-\tilde\sigma_+(0)|$, which converges in
probability to zero uniformly over $\mathcal{F},\mathcal{Q}_n$ by assumption.
The second term converges in probability to zero uniformly over
$\mathcal{F},\mathcal{Q}_n$ by Lipschitz continuity of
$k^+_{\tilde\sigma_+(0)}$. Similar arguments apply to
$k^-_{\tilde\sigma(\cdot)}$ in both cases.
\end{proof}

\begin{lemma}\label{rd_estimated_sigma_equiv_lemma}
Under Assumptions~\ref{rd_xs_assump} and~\ref{rd_errors_assump},
the following holds.
Let $\hat L$ denote the estimator $\hat L_{\delta_n,\tilde\sigma(\cdot)}$ where $\tilde\sigma(\cdot)$ satisfies Assumption~\ref{prespecified_tilde_sigma_assump} and $\delta_n=\omega^{-1}_{\tilde\sigma(\cdot),n}(2n^{-p/(2p+1)}\tilde b_n)$ where $\tilde b_n$ is a deterministic sequence with $\tilde b_n\to \tilde b_\infty$.  Let $\overline{\text{bias}}_n$ and $\tilde v_n$ denote the corresponding worst-case bias and variance formulas.
Let $\hat L^*$ denote the estimator
$\hat L_{\delta_n^*,\tilde\sigma(\cdot)}$ where $\tilde\sigma^*(\cdot)=\hat\sigma_+\1{x>0}+\hat\sigma_-\1{x<0}$ satisfies Assumption~\ref{estimated_tilde_sigma_assump} with the same value of $\tilde\sigma_+(0)$ and $\tilde\sigma_-(0)$ and
$\delta_n^*=\omega^{-1}_{\tilde\sigma(\cdot),n}(2n^{-p/(2p+1)}\tilde b_n^*)$
where $\tilde b_n^*\underset{\mathcal{F},\mathcal{Q}_n}{\overset{p}{\to}} \tilde b_\infty$.  Let $\overline{\text{bias}}_n^*$ and $\tilde v_n^*$ denote the corresponding worst-case bias and variance formulas.  Then
\begin{align*}
n^{p/(2p+1)}\left(\hat L-\hat L^*\right) \underset{\mathcal{F},\mathcal{Q}_n}{\overset{p}{\to}} 0,
& &
n^{p/(2p+1)}\left(\overline{\text{bias}}_n-\overline{\text{bias}}_n^*\right) \underset{\mathcal{F},\mathcal{Q}_n}{\overset{p}{\to}} 0,
& &
\frac{\tilde v_n}{\tilde v_n^*}
\underset{\mathcal{F},\mathcal{Q}_n}{\overset{p}{\to}} 1.
\end{align*}

\end{lemma}
\begin{proof}
We have
\begin{align*}
\hat L=\frac{1}{nh_n}\sum_{i=1}^n w_n(x_i/h_n)y_i=\frac{1}{nh_n}\sum_{i=1}^n w_{n}(x_i/h_n)f(x_i)+\frac{1}{nh_n}\sum_{i=1}^n w_n(x_i/h_n)u_i
\end{align*}
where
$w_n(u)=\frac{k_{\tilde\sigma(\cdot)}^+(u;\tilde b_{+,n},\tilde d_{+,n})}{\frac{1}{nh_n}\sum_{j=1}^n k_{\tilde\sigma(\cdot)}^+(x_j/h_n;\tilde b_{+,n},\tilde d_{+,n})}$
for $u>0$ and similarly with $k_{\tilde\sigma(\cdot)}^+$ replaced by $k_{\tilde\sigma(\cdot)}^-$ for $u<0$
(here, $\tilde d_{+,n}$, $\tilde d_{-,n}$, $\tilde b_{+,n}$ and $\tilde b_{-,n}$ are the coefficients in the solution to the inverse modulus problem defined above).
Similarly,
$\hat L^*$ takes the same form with $w_n$ replaced by
$w_n^*(u)=\frac{k_{\tilde\sigma^*(\cdot)}^+(u;\tilde b_n^*,\tilde d_n^*)}{\frac{1}{nh_n}\sum_{j=1}^n k_{\tilde\sigma^*(\cdot)}^+(x_j/h_n;\tilde b_n^*,\tilde d_n^*)}$ for $u>0$ and similarly for $u<0$ (with $\tilde d_{+,n}^*$, $\tilde d_{-,n}^*$, $\tilde b_{+,n}^*$ and $\tilde b_{-,n}^*$ the coefficients in the solution to the corresponding inverse modulus problem).
Let
$w_\infty(u)=\frac{k_{\tilde\sigma(\cdot)}^+(u;\tilde b_n^*,\tilde d_n^*)}{p_{X,+}(0)\int k_{\tilde\sigma(\cdot)}^+(u;\tilde b_\infty,\tilde d_\infty)\, du}$
Note that, by Lemma~\ref{rd_opt_kern_limit_lemma}, $\sup_u |w_n(u)-w_\infty(u)|\to 0$ and $\sup_u |w_n^*(u)-w_\infty(u)|\underset{\mathcal{F},\mathcal{Q}_n}{\overset{p}{\to}} 0$.

We have
\begin{align*}
\hat L-\hat L^*=\frac{1}{nh_n}\sum_{i=1}^n [w_n(x_i/h_n)-w_n^*(x_i/h_n)]r(x_i)+\frac{1}{nh_n}\sum_{i=1}^n [w_n(x_i/h_n)-w_n^*(x_i/h_n)]u_i
\end{align*}
where
$f(x)=\sum_{j=0}^{p-1}f^{(j)}_+(0)x^j\1{x>0}/j!+\sum_{j=0}^{p-1}f^{(j)}_-(0)x^j\1{x<0}/j!+r(x)$
and we use the fact that
$\sum_{i=1}^{n}w_n(x_i/h_n)x_i^j=\sum_{i=1}^{n} w_n^*(x_i/h_n)x_i^j$ for
$j=0,\ldots,p-1$. Let $B$ be such that, with probability approaching one,
$w_n(x)=w_n^*(x)=0$ for all $x$ with $|x|\ge B $. The first term is bounded by
\begin{equation*}
  \frac{C}{nh_n}
  \sum_{i=1}^n |w_n(x_i/h_n)-w_n^*(x_i/h_n)|\cdot |x_i|^p
  \le \sup_{x}\left|w_n(x)-w_n^*(x)\right| B h_n^p \frac{C}{nh_n}
  \sum_{i=1}^n \1{|x_i/h_n|\le B}.
\end{equation*}
It follows from Lemma~\ref{rd_opt_kern_limit_lemma} that
$\sup_{x}\left|w_n(x)-w_n^*(x)\right|
\underset{\mathcal{F},\mathcal{Q}_n}{\overset{p}{\to}} 0$. Also,
$\frac{1}{nh_n}\sum_{i=1}^n \1{|x_i/h_n|\le B}$ converges to a finite constant
by Assumption~\ref{rd_xs_assump}. Thus, the above display converges uniformly in
probability to zero when scaled by $n^{p/(2p+1)}=h_n^{-p}$.

For the last term in $\hat L-\hat L^*$, scaling by $n^{p/(2p+1)}$ gives
\begin{align*}
\frac{1}{\sqrt{nh_n}}\sum_{i=1}^n [w_n(x_i/h_n)-w_\infty(x_i/h_n)]u_i
-\frac{1}{\sqrt{nh_n}}\sum_{i=1}^n [w_n^*(x_i/h_n)-w_\infty(x_i/h_n)]u_i.
\end{align*}
The first term has mean zero and variance
$\frac{1}{nh}\sum_{i=1}^n [w_n(x_i/h_n)-w_\infty(x_i/h_n)]^2\sigma^2(x_i)$
which is bounded by
$\left\{\sup_u [w_n(u)-w_\infty(u)]^2\right\}\left[\sup_{|x|\le Bh_n} \sigma^2(x)\right]\frac{1}{nh}\sum_{i=1}^n\1{|x_i/h_n|\le B}\to 0$.
Let $c_{n,+}=\frac{\hat\sigma_+^2}{nh_n}\sum_{i=1}^n k_{\tilde\sigma^*(\cdot)}(x_i/h_n;\tilde b_{+,n}^*,\tilde d_{+,n}^*)$
and $c_{\infty,+}=\tilde\sigma_+^2(0)p_{X,+}(0)\int k_{\tilde\sigma^*(\cdot)}(u;\tilde b_{\infty},\tilde d_{\infty})$ so that
$c_{n,+}\underset{\mathcal{F},\mathcal{Q}_n}{\overset{p}{\to}} c_{\infty,+}$, and define $c_{n,-}$ and $c_{\infty,-}$ analogously.
With this notation, we have, for $x_i>0$,
\begin{align*}
&w_n^*(x_i/h_n)
=c_{n,+}^{-1}\hat \sigma_+^2k_{\tilde\sigma^*(\cdot)}(x_i/h_n;\tilde b_{+,n}^*,\tilde d_{+,n}^*)
=c_{n,+}^{-1}h_+(x_i/h_n;\tilde b_{+,n}^*,\tilde d_{+,n}^*)
\end{align*}
and $w_\infty(u)=c_{\infty,+}^{-1}h_+(x_i/h_n; \tilde b_{+,\infty}, \tilde d_{+,\infty})$ where
\begin{equation*}
h_+(u;b_+,d_+)
=\Big( b_{+}+\sum_{j=1}^{p-1} d_{+,j}u^j-C|u|^p\Big)_+
-\Big( b_{+}+\sum_{j=1}^{p-1} d_{+,j}u^j+C|u|^p\Big)_-.
\end{equation*}
Thus,
\begin{multline*}
\frac{1}{\sqrt{nh}}\sum_{i=1}^n [w_n^*(x_i/h_n)-w_\infty(x_i/h_n)]\1{x_i>0}u_i  \\
= \frac{c_{n,+}^{-1}}{\sqrt{nh}}\sum_{i=1}^n
[h_+(u;\tilde b_{+,n},\tilde d_{+,n})-h_+(u;\tilde b_{+,\infty},\tilde d_{+,\infty})]\1{x_i>0}u_i  \\
+\frac{(c_{n,+}^{-1}-c_{n,\infty}^{-1})}{\sqrt{nh}}\sum_{i=1}^n h_+(u;\tilde b_{+,\infty},\tilde d_{+,\infty})\1{x_i>0}u_i.
\end{multline*}
The last term converges to zero uniformly in probability by Slutsky's Theorem.
The first term can be written as $c_{n,+}^{-1}$ times the sum of
\begin{multline*}
  \frac{1}{\sqrt{nh}}\sum_{i=1}^n\left[
  \left( \tilde b_{+,n}^*+\sum_{j=1}^{p-1}
  \tilde d_{+,n,j}^*\left(\frac{x_i}{h_n}\right)^j
  -C\left|\frac{x_i}{h_n}\right|^p\right)_+\right.\\
  - \left.\left( \tilde b_{+,\infty}+\sum_{j=1}^{p-1} \tilde d_{+,\infty,j}\left(\frac{x_i}{h_n}\right)^j-C\left|\frac{x_i}{h_n}\right|^p\right)_+\right]u_i
\end{multline*}
and a corresponding term with $(\cdot)_+$ replaced by $(\cdot)_-$, which can be dealt with using similar arguments.
Letting $A(b_+,d_+)=\{u\colon b_{+}+\sum_{j=1}^{p-1} d_{+,j}u^j-C|u|^p\ge 0\}$, the above display is equal to
\begin{align*}
&\frac{1}{\sqrt{nh}}\sum_{i=1}^n\left( \tilde b_{+,n}^*-\tilde b_{+,\infty}+\sum_{j=1}^{p-1} (\tilde d_{+,n,j}^*-\tilde d_{+,\infty,j})\left(\frac{x_i}{h_n}\right)^j\right)\1{x_i/h_n\in A(\tilde b_{+,\infty},\tilde d_{+,\infty})}u_i  \\
&+\frac{1}{\sqrt{nh}}\sum_{i=1}^n\left( \tilde b_{+,n}^*+\sum_{j=1}^{p-1} d_{+,n,j}^*\left(\frac{x_i}{h_n}\right)^j-C\left|\frac{x_i}{h_n}\right|^p\right)  \\
&\cdot\left[\1{x_i/h_n\in A(\tilde b_{+,n}^*,\tilde d_{+,n}^*)}-\1{x_i/h_n\in A(\tilde b_{+,\infty},\tilde d_{+,\infty})}\right]u_i.
\end{align*}
The first term converges to zero uniformly in probability by Slutsky's Theorem.  The second term can be written as a sum of terms of the form
\begin{equation*}
\frac{1}{\sqrt{nh_n}}
\sum_{i=1}^n (x_i/h_n)^j\left[\1{x_i/h_n\in A(\tilde b_{+,n}^*,\tilde d_{+,n}^*)}
-\1{x_i/h_n\in A(\tilde b_{+,\infty},\tilde d_{+,\infty})}\right]u_i
\end{equation*}
times sequences that converge uniformly in probability to finite constants.  To show that this converges in probability to zero uniformly over $\mathcal{F},\mathcal{Q}_n$,
note that, letting $u_1^*,\ldots,u_k^*$ be the positive zeros of the polynomial
$\tilde b_{+,\infty}+\sum_{j=1}^{p-1} \tilde d_{+,j,\infty} u^j+Cu^p$,
the following statement will hold
with probability approaching one uniformly over $\mathcal{F},\mathcal{Q}_n$ for any $\eta>0$:
for all $u$ with
$\1{u\in A(\tilde b_{+,n}^*,\tilde d_{+,n}^*)}
-\1{u\in A(\tilde b_{+,\infty},\tilde d_{+,\infty})}\ne 0$,
there exists $\ell$ such that $|u-u^*_\ell|\le \eta$.
It follows that the above display is,
with probability approaching one uniformly over $\mathcal{F},\mathcal{Q}_n$,
bounded by a constant times the sum over $j=0,\ldots,p$ and $\ell=1,\ldots,k$ of
\begin{equation*}
  \max_{-1\le t\le 1} \left|\frac{1}{\sqrt{nh_n}}\sum_{i\colon u_\ell-\eta \le
      x_i/h_n\le u_\ell+t\eta} (x_i/h_n)^{j} u_i\right|.
\end{equation*}
By Kolmogorov's inequality \citep[see pp. 62-63 in][]{durrett_probability:_1996}, the probability of this quantity being greater than a given $\delta>0$ under a given $f,Q$ is bounded by
\begin{multline*}
  \frac{1}{\delta^2}\frac{1}{nh_n}\sum_{i\colon u_\ell-\eta \le x_i/h_n\le
    u_\ell+\eta} var_{Q}\left[(x_i/h_n)^j u_i\right]
  =\frac{1}{\delta^2}\frac{1}{nh_n}\sum_{i\colon u_\ell-\eta \le x_i/h_n\le u_\ell+\eta} (x_i/h_n)^{2j}\sigma^2(x_i)  \\
  \to
  \frac{p_{X,+}(0)\sigma^2_+(0)}{\delta^2}\int_{u_\ell^*-\eta}^{u_\ell^*+\eta}
  u^{2j}\, du,
\end{multline*}
which can be made arbitrarily small by making $\eta$ small.

For the bias formulas, we have
\begin{multline*}
  \left| \overline{\text{bias}}_n-\overline{\text{bias}}_n^* \right|
  =\frac{C}{nh_n}\left| \sum_{i=1}^n |w_n(x_i/h_n)x_i^{p}|-\sum_{i=1}^n |w_n^*(x_i/h_n)x_i^{p}| \right|  \\
  \le \frac{C}{nh_n}\sum_{i=1}^n|w_n(x_i/h_n)-w_n^*(x_i/h_n)|\cdot |x_i|^p.
\end{multline*}
This converges to zero when scaled by $n^{p/(2p+1)}$ by arguments given above.

For the variance formulas, we have
\begin{multline*}
  \left| \tilde v_n-\tilde v_n^* \right|
  =\frac{1}{(nh_n)^2}\left| \sum_{i=1}^n w_n(x_i/h_n)^2\tilde\sigma^2(x_i)-\sum_{i=1}^n w_n^*(x_i/h_n)^2\tilde\sigma^{*2}(x_i) \right|  \\
  \le \frac{1}{(nh_n)^2}\sum_{i=1}^n \left| w_n(x_i/h_n)^2\tilde\sigma^2(x_i) - w_n^*(x_i/h_n)^2\tilde\sigma^{*2}(x_i) \right|  \\
  \le \frac{1}{nh_n}\max_{|x|\le B } \left| w_n(x)^2\tilde\sigma^2(x) -
    w_n^*(x)^2\tilde\sigma^{*2}(x) \right| \cdot \frac{1}{nh_n}\sum_{i=1}^n
  \1{|x_i/h_n|\le B}
\end{multline*}
with probability approaching one where $B $ is a bound on the support of
$w_n(x)$ and $w_n^*(x)$ that holds with probability approaching one. Since
$\frac{1}{nh_n}\sum_{i=1}^n \1{|x_i/h_n|\le B}$ converges to a constant by
Assumption~\ref{rd_xs_assump} and
$\tilde v_n=n^{-2p/(2p+1)} v_\infty(1+o(1))=(nh_n)^{-1}v_\infty(1+o(1))$,
dividing the above display by $\tilde v_n$ gives an expression that is bounded
by a constant times
$\max_{|x|\le B h_n} \left| w_n(x)^2\tilde\sigma^2(x) -
  w_n^*(x)^2\tilde\sigma^{*2}(x) \right|$, which converges uniformly in
probability to zero.
\end{proof}

We are now ready to prove Theorem~\ref{rd_optimal_estimator_thm}. First,
consider the case with $\tilde\sigma(\cdot)$ is deterministic and
Assumption~\ref{prespecified_tilde_sigma_assump} holding. By
Lemma~\ref{rd_modulus_max_convergence_lemma}, $\delta_n\to \delta_\infty$. By
Lemma~\ref{rd_uniform_modulus_convergence_lemma}, it then follows that, under
Assumption~\ref{prespecified_tilde_sigma_assump},
$n^{p/(2p+1)}w_n(\delta_n)\to \omega_\infty(\delta_\infty)$ so that
Lemma~\ref{rd_opt_kern_limit_lemma} applies to show that
Assumption~\ref{rd_kern_assump} holds with
$k^+(x)=k_{\tilde\sigma_+(0)}^+(x;\tilde b_{+,\infty},\tilde d_{+,\infty})$ and
$k^-(x)=k_{\tilde\sigma_-(0)}^-(x;\tilde b_{-,\infty}, \tilde{d}_{-,\infty})$,
where
$(\tilde b_{+,\infty},\tilde d_{+,\infty}, \tilde{b}_{-,\infty},
\tilde{d}_{-,\infty})$ minimize
$G_\infty(\tilde{b}_{+,\infty}, \tilde d_{+,\infty}, \tilde{b}_{-,\infty},
\tilde d_{-,\infty})$ subject to
$\tilde{b}_{+,\infty}+ \tilde b_{-,\infty}=\omega_\infty(\delta_\infty)/2$. The
coverage statements and convergence of $n^{p/(2p+1)}\hat \chi$ then follow from
Theorem~\ref{rd_limit_high_level_thm} and by calculating
$\overline{\text{bias}}_\infty$ and $v_\infty$ in terms of the limiting modulus.

We now prove the optimality statements (under which the assumption was made that, for each $n$, there exists a $Q\in\mathcal{Q}_n$ such that the errors are normally distributed).
In this case, for any $\eta>0$, if a linear estimator $\tilde L$ and constant $\chi$ satisfy
\begin{equation*}
  \inf_{f\in\mathcal{F},Q\in\mathcal{Q}_n}
  P\Big(Lf\in\{\tilde L\pm n^{-p/(2p+1)} \chi \}
  \Big)\ge 1-\alpha-\eta,
\end{equation*}
we must have $\chi\ge
\sup_{\delta>0}\frac{n^{p/(2p+1)}\omega_{\sigma(\cdot),n}(\delta)}{\delta}\chi_{A,\alpha+\eta}(\delta/2)$
by the results of \citet{donoho94} (using the characterization of optimal
half-length at the beginning of \Cref{sec:sap:asym_efficiency_bounds}). This
converges to
$\sup_{\delta>0}\frac{\omega_{\infty}(\delta)}{\delta}\chi_{A,\alpha+\eta}(\delta/2)$
by Lemma~\ref{rd_modulus_max_convergence_lemma}. If
$\liminf_n\inf_{f\in\mathcal{F},Q\in\mathcal{Q}_n}P(Lf\in\{\tilde L\pm
n^{-p/(2p+1)} \chi \} )\ge 1-\alpha$, then, for any $\eta>0$, the above display
must hold for large enough $n$, so that $\chi\ge \lim_{\eta\downarrow
  0}\sup_{\delta>0}\frac{\omega_{\infty}(\delta)}{\delta}\chi_{A,\alpha+\eta}(\delta/2)
=\sup_{\delta>0}\frac{\omega_{\infty}(\delta)}{\delta}\chi_{A,\alpha}(\delta/2)$
(the limit with respect to $\eta$ follows since there exist
$0<\underline\delta<\overline\delta<\infty$ such that the supremum over $\delta$
is taken on $[\underline\delta,\overline\delta]$ for $\eta$ in a neighborhood of
zero, and since $\chi_{A,\alpha}(\delta/2)$ is continuous with respect to
$\alpha$ uniformly over $\delta$ in compact sets).

For the asymptotic efficiency bound regarding expected length among all
confidence intervals, note that, for any $\eta>0$, any CI satisfying the
asymptotic coverage requirement must be a $1-\alpha-\eta$ CI for large enough
$n$, which means that, since the CI is valid under the $Q_n\in\mathcal{Q}_n$
where the errors are normal, the expected length of the CI at $f=0$ and this
$Q_n$ scaled by $n^{p/(2p+1)}$ is at least
\begin{equation*}
(1-\alpha-\eta)E\left[n^{p/(2p+1)}\omega_{\sigma(\cdot),n}(2(z_{1-\alpha-\eta}-Z))|Z\le z_{1-\alpha-\eta}\right]
\end{equation*}
by Corollary~\ref{th:centrosymmetric_adaptation_twosided}. This converges to
$(1-\alpha-\eta)E\left[\omega_{\infty}(2(z_{1-\alpha-\eta}-Z))\mid Z\le
  z_{1-\alpha-\eta}\right]$ by Lemma~\ref{rd_modulus_max_convergence_lemma}. The
result follows from taking $\eta\to 0$ and using the dominated convergence
theorem, and using the fact that
$\omega_{\infty}(\delta)=\omega_{\infty}(1)\delta^{2p/(2p+1)}$. The asymptotic
efficiency bounds for the feasible one-sided CI follow from similar arguments,
using Theorem~\ref{th:one_side_adapt_thm} and
Corollary~\ref{th:centrosymmetric_adaptation_corollary} along with
Theorem~\ref{rd_limit_high_level_thm} and
Lemma~\ref{modulus_derivative_convergence_lemma}.

In the case where Assumption~\ref{estimated_tilde_sigma_assump} holds rather
than Assumption~\ref{prespecified_tilde_sigma_assump}, it follows from
Lemma~\ref{rd_modulus_max_convergence_lemma} that
$\delta_n\underset{\mathcal{F},\mathcal{Q}_n}{\overset{p}{\to}} \delta_\infty$.
Then, by Lemma~\ref{rd_estimated_sigma_equiv_lemma}, the conditions
in the last display of Theorem~\ref{rd_limit_high_level_thm} hold with
$\hat L_{\delta_n,\tilde\sigma(\cdot)}$ playing the role of $\hat L^*$ and
$\hat L_{\delta_n,\sigma(\cdot)}$ playing the role of $\hat L$. The results then
follow from Theorem~\ref{rd_limit_high_level_thm} and the arguments above
applied to the CIs based on $\hat L_{\delta_n,\sigma(\cdot)}$.
\end{appendices}

\bibliography{np-testing-library}

\begin{table}[p]
  \centering \renewcommand{\arraystretch}{1.2}
  \begin{tabular}{lrrrrrr@{}}
    &\multicolumn{3}{c}{$\sigma^{2}=0.1295$}&
                                              \multicolumn{3}{c}{$\sigma^{2}=4\cdot 0.1295$}\\
    \cmidrule(rl){2-4}\cmidrule(rl){5-7}
    CI method & Cov. (\%) & Bias & RL& Cov. (\%) & Bias & RL\\
    \midrule
    \multicolumn{3}{@{}l}{Design 1, $(b_{1}, b_{2})=(0.45,0.75)$}\\
    \cmidrule(r){1-3}
    Conventional, $\hat{h}_{IK}$ &     10.1&  -0.098&   0.54&    81.7&  -0.099&   0.72\\
    RBC, $\hat{h}_{IK},\rho=1$& 64.4&  -0.049&   0.80&    93.9&  -0.050&   1.06\\
    Conventional, $\hat{h}_{CCT}$ &    91.2&  -0.010&   1.01&    92.7&  -0.010&   1.26 \\
    RBC, $\hat{h}_{CCT}$&       93.7&   0.003&   1.18&    93.6&   0.007&   1.48 \\
    FLCI, $C=1$ &               94.6&  -0.024&    1  &    94.9&  -0.069&    1    \\
    FLCI, $C=3$ &              96.7 & -0.009 &  1.25 &   96.5 & -0.028 &  1.25\\
    \multicolumn{3}{@{}l}{Design 2, $(b_{1}, b_{2})=(0.4,0.9)$}\\\cmidrule(r){1-3}
    Conventional, $\hat{h}_{IK}$ &     54.2&  -0.063&   0.68&    89.6&  -0.085&   0.77  \\
    RBC, $\hat{h}_{IK},\rho=1$& 94.8&  -0.006&   1.00&    95.9&  -0.043&   1.13   \\
    Conventional, $\hat{h}_{CCT}$ &    91.4&  -0.009&   1.02&    92.7&  -0.009&   1.26   \\
    RBC, $\hat{h}_{CCT}$&       93.6&   0.003&   1.19&    93.6&   0.007&   1.49   \\
    FLCI, $C=1$ &               94.5&  -0.024&    1  &     95.0 &  -0.065&    1     \\
    FLCI, $C=3$ &               96.8&  -0.009&   1.25&    96.5&  -0.028&   1.25   \\
    \multicolumn{3}{@{}l}{Design 3, $(b_{1}, b_{2})=(0.25,0.65)$}\\\cmidrule(r){1-3}
    Conventional, $\hat{h}_{IK}$ &     87.8&  -0.030 &  0.74&    91.4&  -0.009&   0.76 \\
    RBC, $\hat{h}_{IK},\rho=1$& 94.8&  -0.014 &  1.09&    95.0&  -0.044&   1.12  \\
    Conventional, $\hat{h}_{CCT}$ &    90.9&  -0.014 &  0.97&    92.8&  -0.013&   1.25  \\
    RBC, $\hat{h}_{CCT}$&       92.2&  -0.009 &  1.14&    93.5&  -0.007&   1.48  \\
    FLCI, $C=1$ &               94.7&  -0.022 &   1  &    96.7&  -0.028&    1    \\
    FLCI, $C=3$ &               96.8&  -0.009 &  1.25&    96.6&  -0.025&   1.25  \\
    \multicolumn{3}{@{}l}{Design 4, $f(x)=0$}\\\cmidrule(r){1-3}
    Conventional, $\hat{h}_{IK}$ &     93.2&  0.000&    0.54&  93.2  &-0.001   & 0.72  \\
    RBC, $\hat{h}_{IK},\rho=1$& 95.2&  0.000&    0.80&  95.2  & 0.001   & 1.06   \\
    Conventional, $\hat{h}_{CCT}$ &    93.1&  0.001&    0.94&  93.1  & 0.003   & 1.25   \\
    RBC, $\hat{h}_{CCT}$&       93.5&  0.001&    1.12&  93.5  & 0.004   & 1.48   \\
    FLCI, $C=1$ &               96.8&  0.001&    1   &  96.9  & 0.000   &  1     \\
    FLCI, $C=3$ &               96.8&  0.001&    1.25&  96.8  & 0.002   & 1.25   \\
  \end{tabular}
  \caption{Monte Carlo simulation, $C=1$. Coverage (``Cov'') and relative length
    relative to optimal fixed-length CI for $\mathcal{F}_{RDH,2}(1)$ (``RL'').
    ``Bias'' refers to bias of estimator around which CI is centered. 11,000
    simulation draws.}\label{tab:mc2-C1}
\end{table}
\begin{table}[p]
  \centering \renewcommand{\arraystretch}{1.2}
  \begin{tabular}{lrrrrrr@{}}
    &\multicolumn{3}{c}{$\sigma^{2}=0.1295$}&
                                              \multicolumn{3}{c}{$\sigma^{2}=4\cdot 0.1295$}\\
    \cmidrule(rl){2-4}\cmidrule(rl){5-7}
    CI method & Cov. (\%) & Bias & RL& Cov. (\%) & Bias & RL\\
    \midrule
    \multicolumn{3}{@{}l}{Design 1, $(b_{1}, b_{2})=(0.45,0.75)$}\\
    \cmidrule(r){1-3}
    Conventional, $\hat{h}_{IK}$ &      0.1&  -0.292 &  0.44 &   22.4 & -0.296 &  0.58\\
    RBC, $\hat{h}_{IK},\rho=1$& 27.1&  -0.127 &  0.65 &   77.8 & -0.149 &  0.85\\
    Conventional, $\hat{h}_{CCT}$ &    89.3&  -0.019 &  0.94 &   91.6 & -0.031 &  1.05 \\
    RBC, $\hat{h}_{CCT}$&       93.7&   0.004 &  1.06 &   93.7 &  0.012 &  1.22 \\
    FLCI, $C=1$ &               67.3&  -8.078 &  0.80 &   73.1 & -0.209 &  0.80 \\
    FLCI, $C=3$ &               94.5&  -0.032 &   1   &   94.6 & -0.089 &   1  \\
    \multicolumn{3}{@{}l}{Design 2, $(b_{1}, b_{2})=(0.4,0.9)$}\\\cmidrule(r){1-3}
    Conventional, $\hat{h}_{IK}$ &     60.0&  -0.071 &  0.71 &   71.4 & -0.193 &  0.72  \\
    RBC, $\hat{h}_{IK},\rho=1$& 93.5&   0.000 &  1.04 &   95.1 & -0.020 &  1.05   \\
    Conventional, $\hat{h}_{CCT}$ &    89.7&  -0.018 &  0.95 &   91.7 & -0.029 &  1.05   \\
    RBC, $\hat{h}_{CCT}$&       93.6&   0.004 &  1.09 &   93.6 &  0.012 &  1.24   \\
    FLCI, $C=1$ &               70.3&  -0.073 &  0.80 &   76.3 & -0.197 &  0.80    \\
    FLCI, $C=3$ &               94.3&  -0.030 &   1   &   94.6 & -0.089 &   1     \\
    \multicolumn{3}{@{}l}{Design 3, $(b_{1}, b_{2})=(0.25,0.65)$}\\\cmidrule(r){1-3}
    Conventional, $\hat{h}_{IK}$ &     79.9&  -0.052&   0.76&    89.2&  -0.085&   0.73 \\
    RBC, $\hat{h}_{IK},\rho=1$& 93.3&  0.001 &   1.13&    94.6&  -0.072&   1.07 \\
    Conventional, $\hat{h}_{CCT}$ &    80.7&  -0.032&   0.87&    91.8&  -0.042&   1.01 \\
    RBC, $\hat{h}_{CCT}$&       86.2&  -0.017&   1.00&    92.7&  -0.027&   1.20 \\
    FLCI, $C=1$ &               73.5&  -0.069&   0.8 &    93.8&  -0.084&   0.80 \\
    FLCI, $C=3$ &               94.4&  -0.030&    1  &    95.1&  -0.078&    1   \\
    \multicolumn{3}{@{}l}{Design 5, $f(x)=0$}\\\cmidrule(r){1-3}
    Conventional, $\hat{h}_{IK}$ &     93.2&  0.000 &   0.43&    93.2& -0.001 &   0.57  \\
    RBC, $\hat{h}_{IK},\rho=1$& 95.2&  0.000 &   0.64&    95.2&  0.001 &   0.85   \\
    Conventional, $\hat{h}_{CCT}$ &    93.1&  0.001 &   0.75&    93.1&  0.003 &   1.00   \\
    RBC, $\hat{h}_{CCT}$&       93.5&  0.001 &   0.89&    93.5&  0.004 &   1.18   \\
    FLCI, $C=1$ &               96.8&  0.001 &   0.80&    96.9&  0.000 &   0.80   \\
    FLCI, $C=3$ &               96.8&  0.001 &    1  &    96.7&  0.002 &    1     \\
  \end{tabular}
  \caption{Monte Carlo simulation, $C=3$. Coverage (``Cov'') and relative length
    relative to optimal fixed-length CI for $\mathcal{F}_{RDH,2}(1)$ (``RL'').
    ``Bias'' refers to bias of estimator around which CI is centered. 11,000
    simulation draws.}\label{tab:mc2-C3}
\end{table}

\begin{figure}[htp]
  \centering \input{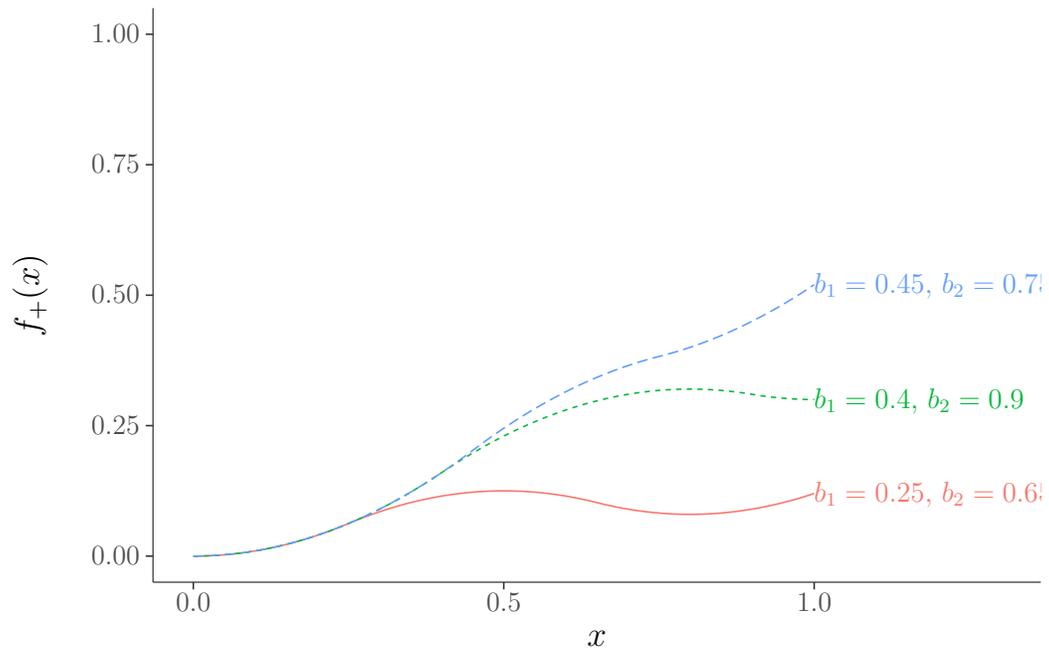}
  \caption{Regression function for Monte Carlo simulation, Designs 1--3, and
    $C=1$. Knots $b_{1}=0.45, b_{2}=0.75$ correspond to Design 1,
    $b_{1}=0.4, b_{2}=0.9$ to Design 2, and $b_{1}=0.25, b_{2}=0.65$ to Design
    3.}\label{fig:reg-fkt}
\end{figure}